\documentclass[leqno,12pt]{amsart}
\usepackage{amssymb}
\usepackage{amsmath}
\usepackage{enumerate}
\usepackage{amsfonts}
\usepackage{hyperref}

\usepackage[headheight=18pt, top=20mm, bottom=20mm, left=22mm, right=22mm]{geometry}

\usepackage{tikz}
\usepackage{pgfplots}
\pgfplotsset{compat=1.18}

\definecolor{darkgreen}{RGB}{45, 119, 75}

\newcommand{\supp}{\text{supp }}

\newtheorem{theorem}{Theorem}[section]
\newtheorem{corollary}[theorem]{Corollary}
\newtheorem{lemma}[theorem]{Lemma}
\newtheorem{proposition}[theorem]{Proposition}
\newtheorem{remark}[theorem]{Remark}
\newtheorem{definition}[theorem]{Definition}

\numberwithin{equation}{section}

\hypersetup{
    colorlinks=true,
    linkcolor= blue,
    citecolor =cyan,
    urlcolor = teal,
}

\makeatletter
\@ifundefined{c@part}{\newcounter{part}}{}

\renewcommand\part{%
  \@ifstar{\@spart}{\@part}%
}

\def\@part#1{%
  \refstepcounter{part}%
  \addcontentsline{toc}{part}{\protect\numberline{\thepart}#1}%
  \par\vspace{1\bigskipamount}
  \begin{center}%
    \normalfont\large\bfseries Part\ \thepart\quad #1%
  \end{center}%
  \vspace{1\bigskipamount}%
  \@afterheading
}

\def\@spart#1{%
  \addcontentsline{toc}{part}{#1}%
  \par\vspace{1\bigskipamount}
  \begin{center}%
    \normalfont\large\bfseries #1%
  \end{center}%
  \vspace{1\bigskipamount}%
  \@afterheading
}
\makeatother
  
\begin{document}

\title[H\"ormander's multiplier theorem on Hardy spaces]{H\"ormander's multiplier theorem on $H^p$-spaces in the rational Dunkl setting}

\subjclass[2020]{{primary: 42B30, secondary: 42B35, 33C52, 42B10, 35K08}}
\keywords{multipliers, Dunkl operators, Hardy spaces, tent spaces, atomic decompositions, root systems, Dunkl heat kernel}

\author[Jacek Dziubański]{Jacek Dziubański}
\author[Agnieszka Hejna-Łyżwa]{Agnieszka Hejna-Łyżwa}
\begin{abstract} 
On $\mathbb{R}^N$ equipped with a normalized root system $\mathcal R$ and a multiplicity function $k\geq 0$, let $dw(\mathbf x)=\Pi_{\alpha\in \mathcal R}|\langle \mathbf x,\alpha\rangle|^{k(\alpha)}\, d\mathbf x$, $\mathbf{N}=N+\sum_{\alpha\in \mathcal R}k(\alpha)$ denote the associated measure and the homogeneous dimension of the system $(\mathcal R,k)$ respectively. Let $\mathcal F$ stand for the Dunkl transform. For $0<p\leq 1$, let $m$ be a bounded function on $\mathbb{R}^N$, which satisfies the classical H\"ormander's  condition with smoothness $s>\mathbf{N}/p$. We show that the multiplier operator $\mathcal T_mf=\mathcal F^{-1}(m\mathcal Ff)$, initially defined on $H^p_{\mathrm{Dunkl}}\cap L^2(dw)$, has a unique extension to a bounded operator in $H^p_{\mathrm{Dunkl}}$, where the space $H^p_{\mathrm{Dunkl}}$ is defined by means of a Littlewood-Paley square function. To prove the theorem, we use special atomic and molecule characterizations of $H^p_{\mathrm{Dunkl}}$. 

\end{abstract}

\address{Jacek Dziubański, Uniwersytet Wroc\l awski,
Instytut Matematyczny,
Pl. Grunwaldzki 2,
50-384 Wroc\l aw,
Poland}
\email{jdziuban@math.uni.wroc.pl}

\address{Agnieszka Hejna-Łyżwa, Uniwersytet Wroc\l awski,
Instytut Matematyczny,
Pl. Grunwaldzki 2,
50-384 Wroc\l aw,
Poland}
\email{hejna@math.uni.wroc.pl}

\maketitle

\section{Introduction}

{In classical harmonic analysis, H\"ormander's multiplier theorem \cite{Hormander1960} reduces the problem of operator boundedness on the spaces $L^p(\mathbb{R}^N)$ and $H^p(\mathbb{R}^N)$ to controlling the behavior of the symbol on dyadic scales. By replacing strict pointwise bounds on derivatives with a Sobolev-type integral condition, this theorem establishes the smoothness threshold $s > N/p - N/2$ required for the multiplier (see e.g.,~\cite{Calderon-Torchinsky},~\cite{Taibleson-Weiss}). This result serves as a standard tool in operator theory, finding direct applications in, among others, proving the boundedness of singular integral operators, investigating the properties of Bochner-Riesz means, and analyzing solutions to partial differential equations (e.g., the wave and Schr\"odinger equations). The aim of the present paper is to investigate an analogous sufficient condition for multiplier operators on the Hardy spaces $H^p_{\mathrm{Dunkl}}$ in the rational Dunkl setting.}

On the Euclidean space $\mathbb{R}^N$ equipped with a root system $\mathcal R$ and a multiplicity function $k\geq 0$, \textit{} let 
$$dw(\mathbf x)=\Pi_{\alpha\in \mathcal R}|\langle \mathbf x,\alpha\rangle|^{k(\alpha)}\, d\mathbf x$$ 
be the associated measure and $\mathbf{N}=N+\sum_{\alpha\in \mathcal{R}}k(\alpha)$ the homogeneous dimension of the system $(\mathcal R,k)$, respectively. Let $E(\mathbf x,\mathbf y)$, $\mathbf x,\mathbf y\in\mathbb C^N$, be the associated Dunkl kernel. The Dunkl transform 
$$ \mathcal Ff(\xi)={\mathbf c}_k^{-1} \int_{\mathbb{R}^N} E(-i\xi,\mathbf x)f(\mathbf x)\, dw(\mathbf x),$$
originally defined on $L^1(dw)$, is an isometry on $L^2(dw)$ and preserves the Schwartz class of functions $\mathcal S(\mathbb{R}^N)$. 
Moreover,  its inverse is given by $\mathcal F^{-1} f(\mathbf x)=\mathcal Ff(-\mathbf x)$. 

Let 
$\Delta_k=\sum_{j=1}^N T_j$
be the Laplace-Dunkl operator, where $T_j$ are the Dunkl operators. The operator $\Delta_k$ generates the Dunkl-heat semigroup $ \{H_t\}_{t>0}$, of   contractions on  $L^p(dw)$, $1\leq p\leq \infty$, strongly continuous and holomorphic for $1\leq p<\infty$. 

For a reasonable function $f$ on $\mathbb{R}^N$, (for example for $f\in L^2(dw)$) we define the  square function $Sf(x)$: 
\begin{equation}\label{eq:square_conic}
    Sf(\mathbf{x})=\Big(\int_0^\infty \int_{\|\mathbf{x}-\mathbf{y}\|<t} |t^2\Delta_kH_{t^2}f(\mathbf{y})|^2\frac{dw(\mathbf{y})}{w(B(\mathbf{x},t))}\frac{ dt }{t}\Big)^{1/2}.
\end{equation}
Following \cite{Hofman} and \cite{Duong}, for $0<p\leq 1$, let 
\begin{equation}\label{eq:HHp} \mathbb{H}^p_{\mathrm{Dunkl}} =\{f\in L^2(dw): Sf\in L^p(dw)\}.\end{equation}
We equip the space with the quasi-norm $\|f\|_{\mathbb{H}^p_{\mathrm{Dunkl}}}=\|Sf\|_{L^p(dw)}$. Then 
$\|f-g\|_{\mathbb{H}^p_{\mathrm{Dunkl}}}^p$ is a metric in $\mathbb{H}^p_{\mathrm{Dunkl}}$.

\begin{definition}
    The Hardy space $H^p_{\mathrm{Dunkl}}$ is defined as the  completion of the space $\mathbb{H}^p_{\mathrm{Dunkl}}$ in the quasi-norm $\|\cdot\|_{\mathbb{H}^p_{\mathrm{Dunkl}}}$, { where, for a Cauchy sequence $(f_n)_{n \in \mathbb{N}}$ in $\mathbb H^p_{\rm Dunkl}$, its equivalence class $\big[(f_n)_{n \in \mathbb{N}}\big]$ has the quasi-norm $\|\big[(f_n)_{n \in \mathbb{N}}\big]\|_{H^p_{\rm Dunkl}}=\lim_{n\to \infty } \| f_n\|_{\mathbb H^p_{\rm Dunkl}}$.} 
\end{definition}

{
Let us note that if $(f_n)_{n \in \mathbb{N}}$ is a Cauchy sequence in the quasi-norm $\|\cdot\|_{\mathbb{H}^p_{\mathrm{Dunkl}}}$, then the limit $\lim_{n \to \infty}\|f_n\|_{\mathbb{H}^p_{\mathrm{Dunkl}}}$ exists, since for all $n,m \in \mathbb{N}$ we have
\[\Big|\|f_n\|_{\mathbb{H}^p_{\mathrm{Dunkl}}}-\|f_m\|_{\mathbb{H}^p_{\mathrm{Dunkl}}}\Big| \leq \|f_n-f_m\|_{\mathbb{H}^p_{\mathrm{Dunkl}}}.\]}

We will show that every element of $H^p_{\mathrm{Dunkl}}$ is identified with a unique tempered distribution $f\in\mathcal S'(\mathbb{R}^N)$, where $\mathcal S(\mathbb{R}^N)$ denotes the class of Schwartz functions on $\mathbb{R}^N$. 

For $s>0$, denote by $W_2^s(\mathbb{R}^N)$ the classical Sobolev space, that is, 
$$ W_2^s(\mathbb{R}^N)=\Big\{f\in L^2(dx): \int_{\mathbb{R}^N} |\widehat f(\xi)|^2 (1+|\xi|)^{2s}\, d\xi  =\|f\|_{W_2^s}^2<\infty\Big\}.$$

We are now in a position to state our main result.

\begin{theorem}\label{teo:teo_main}
Let $0<p \leq 1$ and let $\psi \in C^{\infty}_c(\mathbb{R}^N)$ be a non-zero radial function such that $\supp \psi \subset \mathbb{R}^N\setminus\{0\}$. Assume that  $m$ is a bounded function on $\mathbb{R}^N$ which satisfies the H\"ormander condition 
\begin{equation}\label{eq:assumption}
M_0:=\sup_{t>0}\|\psi (\cdot)m(t \cdot )\|_{W^{s}_2} < \infty
\end{equation}
for certain  $s> {\mathbf{N}}/p$, then the multiplier operator $\mathcal{T}_mf=\mathcal{F}^{-1}(m(\cdot)\mathcal{F}f(\cdot))$
originally defined on $\mathbb{H}^p_{\mathrm{Dunkl}}$, has a unique extension to a bounded operator on $H^p_{\mathrm{ Dunkl}}$.
Moreover, 
$$ \| \mathcal T_m f\|_{H^p_{\mathrm{Dunkl}}}\leq C_{p,s} M_0 \|f\|_{H^p_{\mathrm{Dunkl}}}.$$
\end{theorem}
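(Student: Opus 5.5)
The plan is to run the classical atom-to-molecule argument, adapted to the Dunkl setting through the tent-space/atomic characterization of $H^p_{\mathrm{Dunkl}}$ that the abstract says will be developed. First I will establish an atomic decomposition. Any $f\in\mathbb H^p_{\mathrm{Dunkl}}$ can be written as $f=\sum_j\lambda_j a_j$ with $\sum|\lambda_j|^p\lesssim\|f\|^p_{H^p_{\mathrm{Dunkl}}}$ and convergence in $L^2(dw)$. The atoms are of the form $a=\Delta_k^M b$, where $M$ is large, $b$ and $(r^2\Delta_k)^{\ell}b$ ($\ell\le M$) are supported in the orbit neighbourhood $\mathcal O(B(\mathbf x_0,r))$, and
\[
\|(r^2\Delta_k)^{\ell} b\|_{L^2(dw)}\le r^{2M} w(B(\mathbf x_0,r))^{1/2-1/p}.
\]
This comes from the atomic decomposition of the tent space $T^p_2$ together with the Calderón reproducing formula $f=c\int_0^\infty (t^2\Delta_k)^{M+1}H_{t^2}\,t^2\Delta_kH_{t^2}f\,dt/t$. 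I will also establish the converse, a molecular estimate: any $\mu$ of the form $\mu=\Delta_k^M\beta$ satisfying $L^2(dw)$ size bounds with decay $(1+d(\mathbf x,\mathbf x_0)/r)^{-\varepsilon}$ on dyadic annuli $\mathcal O(B(\mathbf x_0,2^{i}r))\setminus\mathcal O(B(\mathbf x_0,2^{i-1}r))$ has $\|S\mu\|_{L^p(dw)}\le C$. Here $d$ is the orbit distance. The proof of this estimate splits $S\mu$ near and far from the orbit of the ball. Near the orbit one uses $L^2$ boundedness of $S$ and Hölder. Far from it one uses the Gaussian bounds $|t^2\Delta_k H_{t^2}(\mathbf x,\mathbf y)|\lesssim w(B(\mathbf x,t))^{-1}e^{-cd(\mathbf x,\mathbf y)^2/t^2}$ together with cancellation from $\Delta_k^M$.

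Given these two characterizations, the theorem reduces to showing that $\mathcal T_m a$ is a multiple $CM_0$ of a molecule, uniformly over atoms $a$. By Plancherel and $|m|\le C M_0$, which follows from the Hörmander condition and boundedness, $\mathcal T_m$ is bounded on $L^2(dw)$ and commutes with $\Delta_k$. So $\mathcal T_m a=\Delta_k^M\mathcal T_m b$, and the size bound on the double ball is immediate. For the decay, write $m=\sum_{j\in\mathbb Z} m\,\psi_j$ with $\psi_j=\psi(2^{-j}\cdot)$ normalized to form a partition of unity. Denote by $K_j(\mathbf x,\mathbf y)$ the Dunkl kernel of $\mathcal T_{m\psi_j}$. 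The key kernel estimate I aim for is the weighted Plancherel-type bound
\[
\int \big|K_j(\mathbf x,\mathbf y)\big|^2 \big(1+2^j d(\mathbf x,\mathbf y)\big)^{2s'}\,dw(\mathbf x)\lesssim M_0^2\,w(B(\mathbf y,2^{-j}))^{-1}\qquad (s'<s).
\]
I would prove it using the Dunkl translation structure: $K_j(\mathbf x,\mathbf y)=\tau_{\mathbf x}\check{(m\psi_j)}(-\mathbf y)$. The input is the known fact that Dunkl translations of $L^2$ functions with Euclidean decay $\|\mathbf z\|^{-s}$ have $L^2$ decay in terms of $d$; this transfers classical Sobolev regularity of $m\psi_j(2^j\cdot)$ into the Dunkl-transform setting. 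The Hörmander condition controls $\|m\psi_j(2^j\cdot)\|_{W^s_2}\le CM_0$. One subtlety is that the weighted estimate is naturally phrased for the Dunkl transform of a Sobolev function rather than the Euclidean one. This is handled by noting that radial-type estimates $\int|\mathcal F^{-1}g|^2(1+\|\mathbf x\|)^{2s}\,dw\lesssim\|g\|^2_{W^s_2}$ follow from interpolating between $s=0$ (Plancherel) and even integers $s$, where $\|\mathbf x\|^{2}$ corresponds to $-\Delta_k$ on the transform side. This produces extra terms from the reflections that are still bounded by $W_2^s$ norms of the compactly supported $g$.

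With the kernel bound in hand, one estimates $\mathcal T_m a$ on each annulus at distance $2^ir$. For $2^j r\ge1$, use Cauchy–Schwarz with the weighted bound, giving decay $(2^{i+j}r)^{-s'}$. For $2^jr<1$, exploit the cancellation $a=\Delta_k^Mb$, i.e.\ the factor $|\xi|^{2M}$, giving a gain $(2^jr)^{2M}$. Summing over $j$ yields $\|\mathcal T_m a\|_{L^2(\text{annulus}_i)}\lesssim M_0\,2^{-i s'}w(B(\mathbf x_0,r))^{1/2-1/p}$, up to volume factors. The same argument is applied to $(r^2\Delta_k)^\ell \mathcal T_m b$. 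The molecular condition requires the decay to beat the doubling growth $w(B(\mathbf x_0,2^ir))\le C2^{i\mathbf N}w(B(\mathbf x_0,r))$ after taking $L^p$ of the $L^2$ bound via Hölder. This is exactly where $s>\mathbf N/p$ is used: the weighted $L^2$ bound has its dimension loss measured by $\mathbf N$ through $w$ rather than $N$, which is why the threshold involves $\mathbf N/p$.

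The last step is the extension. For $f\in\mathbb H^p_{\mathrm{Dunkl}}$ we get $\mathcal T_mf=\sum\lambda_j\mathcal T_m a_j$ in $L^2$. Since $S$ is sublinear and $L^2$-continuous, $\|S\mathcal T_mf\|_p^p\le\sum|\lambda_j|^p\|S\mathcal T_ma_j\|_p^p\lesssim M_0^p\|f\|^p$. Density of $\mathbb H^p_{\mathrm{Dunkl}}$ in its completion then gives the unique bounded extension. I expect the main obstacle to be the weighted kernel estimate for $K_j$ in terms of the orbit distance $d$, since the Dunkl translations are not positive and Euclidean Sobolev smoothness must be converted into decay measured by $d$ and $w$.
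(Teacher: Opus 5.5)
\textbf{The gap: your key kernel estimate.} Your argument rests on
\[
\int_{\mathbb R^N}|K_j(\mathbf x,\mathbf y)|^2\bigl(1+2^jd(\mathbf x,\mathbf y)\bigr)^{2s'}\,dw(\mathbf x)\lesssim M_0^2\,w(B(\mathbf y,2^{-j}))^{-1},\qquad s'<s,
\]
and your sketch does not prove it. The route you describe is to cut $K_j$ into Euclidean annuli and use Theorem~\ref{teo:support}, $\|\tau_{\mathbf y}f\|_{L^2(dw)}\le\|f\|_{L^2(dw)}$, and a weighted Plancherel bound. That only gives $\lesssim M_0^2\,2^{j\mathbf N}\simeq M_0^2\,w(B(0,2^{-j}))^{-1}$ on the right.

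With that normalization, your Cauchy--Schwarz step over $\mathbf y\in\mathcal O(B)$ produces the factor $(2^jr)^{\mathbf N/2}\bigl(w(B(\mathbf x_0,r))/r^{\mathbf N}\bigr)^{1/2}$. The second factor is unbounded for balls far from the reflecting hyperplanes, so you get no uniform molecular bound.

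Conversely, suppose the estimate held for every $s'<s$. Then your own bookkeeping gives $\|\mathcal T_m\boldsymbol a\|_{L^2(U_i(B))}\lesssim M_0 2^{-is'}w(B)^{1/2-1/p}$, and you would only need $s'>\mathbf N(\tfrac1p-\tfrac12)$, i.e.\ $s>\mathbf N/p-\mathbf N/2$. That is a stronger theorem than the one stated. The paper attributes the extra $\mathbf N/2$ precisely to the lack of $L^p$ bounds for Dunkl translations, so your account of where $s>\mathbf N/p$ enters is also off.

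\textbf{How the paper avoids this.} It never estimates translated kernels. Let $g=(r^2\Delta_k)^n\Delta_k^M\boldsymbol b$, supported in $\mathcal O(B)$. Split the convolution kernel as $K_j=K_j\chi_{\{\|\mathbf x\|\le 2^ir\}}+K_j\chi_{\{\|\mathbf x\|>2^ir\}}$. The first part contributes nothing on $U_i(B)$ by Theorem~\ref{teo:support}. The second is controlled by $\|f*g\|_{L^2(dw)}\le\|f\|_{L^1(dw)}\|g\|_{L^2(dw)}$ (Lemma~\ref{lem:ThangaveluXu}), which is uniform in the position of $B$.

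The $L^1$ tail comes from $\bigl\|\widetilde K(1+\|\cdot\|)^{\mathbf N/2+\delta}\bigr\|_{L^1(dw)}\lesssim\|\widetilde m\|_{W_2^{\mathbf N+\delta'}}$ for the rescaled pieces $\widetilde m=m(2^j\cdot)\phi$, $\widetilde K=\mathcal F^{-1}\widetilde m$. This follows by Cauchy--Schwarz from the weighted $L^2$ estimate of \cite{ABDH}, and that step costs $\mathbf N/2$ derivatives. Requiring $2^{-i(\mathbf N/2+\delta)}$ to beat $2^{i\mathbf N(1/p-1/2)}$ is exactly $s>\mathbf N/p$. A version of your kernel bound can be recovered this way, but only for $s'<s-\mathbf N/2$, and then it is no longer needed.

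\textbf{Cancellation for the $\ell=0$ term.} You take $\mathcal T_m\boldsymbol b$ as the function whose $\Delta_k^M$ is the molecule, and say the same argument controls $(r^2\Delta_k)^\ell\mathcal T_m\boldsymbol b$ on annuli for all $\ell\le M$. For $\ell=0$ (and small $\ell$) there is no factor $\|\xi\|^{2M}$ to use at low frequencies. Since $\boldsymbol b$ carries no cancellation, $\sum_{2^jr<1}K_j*\boldsymbol b$ cannot be expected to decay like $r^{2M}2^{-i\varepsilon}w(2^iB)^{1/2-1/p}$ on $U_i(B)$ when $p\le 1$.

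The paper instead starts from $(p,2,2M,\Delta_k)$-atoms $\boldsymbol a=\Delta_k^{2M}\boldsymbol b$ (Proposition~\ref{propo:atomic_decomposition} with $2M$ in place of $M$). It sets $\boldsymbol b'=\mathcal T_m\Delta_k^M\boldsymbol b$, so every $(r^2\Delta_k)^n\boldsymbol b'$ still contains $\Delta_k^M$. Alternatively, you could weaken the molecule definition to require only a global $L^2$ bound on $\boldsymbol b'$, which is all the proof of Proposition~\ref{prop:m_in_Hp} actually uses.

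\textbf{Compact support of atoms.} The heat-semigroup reproducing formula you propose does not give $\boldsymbol b$ supported in $\mathcal O(B)$, only Gaussian decay in $d$. The paper gets genuine atoms from compactly supported radial $\psi$ with $\phi=\Delta_k^M\psi$, using the support property of Dunkl translations of radial functions (R\"osler's formula). This compact support is what kills the near part of the kernel above.

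Your molecular estimate and the final density/extension step agree with the paper's.
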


The theorem extends the results of the authors of~\cite{DzHe-JFA}, where Dunkl multipliers on Lebesgue spaces $L^p(dw)$, $1\leq p<\infty$, and the Hardy space $H^1_{\rm Dunkl}$  were studied. In the case of the space $H^1_{\rm Dunkl}$, the multiplier theorem was proved using a  characterization of this space by relevant Riesz transforms. We refer the reader to \cite{Thangavelu2}, in which the Dunkl-transform  multiplier  operators on Lebesgue spaces $L^{p}_r(dw)$, $1<p\leq 2$,  of radial functions were considered.

Let us also remark that for the H\"ormander's multiplier theorem on the classical Hardy spaces one needs smoothness $s>N/p-N/2$ for the multiplier $m$, where $N$ is the Euclidean dimension  which, in this case, coincides with the homogeneous dimension. 
{It should be emphasized that Theorem~\ref{teo:teo_main} applies to general, non-radial multipliers. The requirement of a higher smoothness order $s > \mathbf{N}/p$ in our proof, compared to the classical setting, is a consequence of a fundamental obstacle in Dunkl theory: the lack of general $L^p(dw)$-boundedness for Dunkl translations and convolutions. Whether the Dunkl translation operators are bounded on $L^p(dw)$ for $p \neq 2$ remains a well-known open problem. Currently, affirmative results are restricted either to product root systems or to the subspace of radial functions. 

To overcome this structural difficulty, our strategy avoids general $L^p$ estimates. Instead, we exploit two crucial features of the Dunkl setting. First, we rely on the fact that Dunkl translations and convolutions are well-behaved bounded operators on $L^2(dw)$. Second, we make essential use of the support theorem, which states that if a function $f$ is supported in a ball $B(0,r)$, then its Dunkl translation $\tau_{\mathbf{x}} f$ is supported in the orbit set $\mathcal{O}(B(\mathbf{x},r))$ (see Theorem~\ref{teo:support}). This result was previously known only for radial functions \cite{Roesle99}, but in \cite{DzHe-JFA} it was generalized to non-radial functions, which serves as one of our main tools (see also~\cite{AAS-Colloq},~\cite{ThangaveluXu}). Furthermore, recent results from~\cite{GallardoRejeb2018},~\cite{JiuLi}, and~\cite{DH-heat} indicate that this orbital support condition cannot be essentially improved. By combining the $L^2$-theory with the support estimate, we are able to effectively employ atomic decompositions and Hardy space techniques to obtain the desired bounds.

One of the main elements of the proof of Theorem~\ref{teo:teo_main} is the usage of characterizations of the space $H^p_{\mathrm{Dunkl}}$ by means of special atoms and molecules. In the case of Hardy spaces associated with semigroups   satisfying Gaussian bounds, or more generally Davies-Gaffney estimates, such characterizations were considered in \cite{Hofman}, \cite{Duong}. 
Since in the estimates for the Dunkl-heat semigroups kernels two distances occur, namely the Euclidean distance and an orbit distance, and for the reader convenience, we provide the details.}

The paper is organized as follows. Section~\ref{sec:preliminaries} contains necessary preliminaries regarding Dunkl theory. In Section~\ref{sec:tent}, we recall the Calder\'on reproducing formulas and properties of tent spaces. Section~\ref{sec:Hardy} is devoted to the atomic and molecular characterizations of the Hardy spaces $H^p_{\mathrm{Dunkl}}$. Finally, the proof of Theorem~\ref{teo:teo_main} is presented in Section~\ref{sec:Hormander}.

\section{Preliminaries - Dunkl theory}\label{sec:preliminaries}

In this section, we present basic facts concerning the theory of the Dunkl operators.   For more details, we refer the reader to~\cite{Dunkl},~\cite{Roesle99},~\cite{Roesler3}, and~\cite{Roesler-Voit}. 

We consider the Euclidean space $\mathbb{R}^N$ with the scalar product $\langle \mathbf{x},\mathbf y\rangle=\sum_{j=1}^N x_jy_j
$, where $\mathbf x=(x_1,...,x_N)$, $\mathbf y=(y_1,...,y_N)$, and the norm $\| \mathbf x\|^2=\langle \mathbf x,\mathbf x\rangle$.

A {\it normalized root system}  in $\mathbb{R}^N$ is a finite set  $\mathcal R\subset \mathbb{R}^N\setminus\{0\}$ such that $\mathcal{R} \cap \alpha  \mathbb{R} = \{\pm \alpha\}$,  $\sigma_\alpha (\mathcal{R})=\mathcal{R}$, and $\|\alpha\|=\sqrt{2}$ for all $\alpha\in \mathcal{R}$, where $\sigma_\alpha$ is defined by $\sigma_\alpha (\mathbf x)=\mathbf x-2\frac{\langle \mathbf x,\alpha\rangle}{\|\alpha\|^2} \alpha$.
The finite group $G$ generated by the reflections $\sigma_{\alpha}$, $\alpha \in \mathcal{R}$, is called the {\it reflection group} of the root system. A~{\textit{multiplicity function}} is a $G$-invariant function $k:\mathcal{R}\to\mathbb C$, which will be fixed and non-negative throughout this paper.  

The associated $G$-invariant measure $dw$ is defined by $dw(\mathbf x)=w(\mathbf x)\, d\mathbf x$, where 
 \begin{equation}\label{eq:measure}
w(\mathbf x)=\prod_{\alpha\in \mathcal{R}}|\langle \mathbf x,\alpha\rangle|^{k(\alpha)}.
\end{equation}
Let $\mathbf{N}=N+\sum_{\alpha \in \mathcal{R}}k(\alpha)$. Then, 
\begin{equation}\label{eq:t_ball} w(B(t\mathbf x, tr))=t^{\mathbf{N}}w(B(\mathbf x,r)) \ \ \text{ for all } \mathbf x\in\mathbb{R}^N, \ t,r>0.
\end{equation}
Observe that there is a constant $C>1$ such that for all $\mathbf{x} \in \mathbb{R}^N$ and $r>0$, we have
\begin{equation}\label{eq:balls_asymp}
C^{-1}w(B(\mathbf x,r))\leq  r^{N}\prod_{\alpha \in \mathcal{R}} (|\langle \mathbf x,\alpha\rangle |+r)^{k(\alpha)}\leq C w(B(\mathbf x,r)),
\end{equation}
so $dw(\mathbf x)$ is doubling.

Moreover, by \eqref{eq:balls_asymp}, there exists a constant $C \geq 1$ such that for every $\mathbf{x} \in \mathbb{R}^N$,
\begin{equation}\label{eq:growth}
C^{-1} \left(\frac{R}{r}\right)^{ N}\leq \frac{w(B(\mathbf x, R))}{w(B(\mathbf x, r))}\leq C \left(\frac{R}{r}\right)^{\mathbf{N}}\ \ \text{ for } 0<r<R.
\end{equation}

For $\mathbf{x},\mathbf{y} \in \mathbb{R}^N$, let 
\begin{equation}\label{eq:distance_of_orbits}
    d(\mathbf x,\mathbf y)=\min_{\sigma\in G}\| \sigma(\mathbf x)-\mathbf y\|.    
\end{equation}

For a Lebesgue measurable set $A$ (in particular, for $A=B(\mathbf{x}_0,r)$) we denote
\begin{equation}
    \mathcal{O}(A)=\{\sigma(\mathbf{z})\;:\; \sigma \in G,\, \mathbf{z} \in A\}.
\end{equation}

For $\xi \in \mathbb{R}^N$, the {\it Dunkl operators} $T_\xi$  are the following $k$-deformations of the directional derivatives $\partial_\xi$ by   difference operators:
\begin{equation}\label{eq:T_xi}
     T_\xi f(\mathbf x)= \partial_\xi f(\mathbf x) + \sum_{\alpha\in \mathcal{R}} \frac{k(\alpha)}{2}\langle\alpha ,\xi\rangle\frac{f(\mathbf x)-f(\sigma_\alpha(\mathbf{x}))}{\langle \alpha,\mathbf x\rangle}.
\end{equation}
 We simply write $T_j$, if $\xi=e_j$, {where $\{e_j\}_{j=1}^N$ stands for the canonical basis in $\mathbb{R}^N$.}  

The Dunkl operators $T_{\xi}$, which were introduced in~\cite{Dunkl}, commute with each other and are skew-symmetric with respect to the $G$-invariant measure $dw$.

\subsection{Dunkl kernel}\label{sec:Dunkl_transform}
For any fixed $\mathbf y\in\mathbb{R}^N$, the {\it Dunkl kernel} $\mathbf{x} \longmapsto E(\mathbf x,\mathbf y)$ is a unique analytic solution to the system $T_\xi f=\langle \xi ,\mathbf y\rangle f, \quad f(0)=1$. The function $E(\mathbf x,\mathbf y)$, which generalizes the exponential function $e^{\langle\mathbf x,\mathbf y\rangle}$, has a unique extension to a holomorphic  function $E(\mathbf z,\mathbf w)$ on $\mathbb C^N\times \mathbb C^N$. Moreover, for all $\xi,\mathbf{x} \in \mathbb{R}^N$, we have
\begin{equation} \label{eq:E} | E(i\xi, \mathbf x)|\leq 1.
\end{equation}

For a multi-index $\boldsymbol{\beta}=(\beta_1,\beta_2,\ldots,\beta_N)\in \mathbb N_0^N$, we denote
\begin{equation}\label{eq:iterated_der_ord}
    |\boldsymbol{\beta}|=\beta_1+\ldots +\beta_N, \ \partial^{\mathbf{0}}=I, \ \ \partial^{\boldsymbol{\beta}}=\partial_1^{\beta_1} \circ \ldots \circ \partial_N^{\beta_N}, \ \ T^{\mathbf{0}}=I, \ \ T^{\boldsymbol{\beta}}=T_{1}^{\beta_1} \circ \ldots \circ T_{N}^{\beta_N}.
\end{equation}

\subsection{Dunkl transform}
Let $f \in L^1(dw)$. The \textit{Dunkl transform }$\mathcal{F}f$ of $f$ is defined  by
\begin{equation}\label{eq:Dunkl_transform}
    \mathcal{F} f(\xi)=\mathbf{c}_k^{-1}\int_{\mathbb{R}^N}f(\mathbf{x})E(\mathbf{x},-i\xi)\, {dw}(\mathbf{x}), \text{ where } \mathbf{c}_k=\int_{\mathbb{R}^N}e^{-\frac{{\|}\mathbf{x}{\|}^2}2}\,{dw}(\mathbf{x}){>0}.
\end{equation}
The Dunkl transform is a generalization of the Fourier transform on $\mathbb{R}^N$. It was introduced in~\cite{D5} for $k \geq 0$ and further studied in~\cite{dJ1} in a more general setting. It maps the Schwartz class  $\mathcal S(\mathbb{R}^N)$ onto $\mathcal S(\mathbb{R}^N)$ and possesses many properties analogous to those of the classical Fourier transform, for example, 
\begin{equation}\label{eq:T_j_transform}
    \mathcal{F}(T_{j}f)(\xi)=i\xi_{j}\mathcal{F}f(\xi) \text{ for all }f \in \mathcal{S}(\mathbb{R}^N) \text{ and }j \in \{1,\ldots,N\}, 
\end{equation}
\begin{equation}
   \text{ if } f \ \text{ is real-valued and radial, then } \mathcal Ff(\xi) \ \text{ is real-valued and radial}.
\end{equation}
Moreover, it was proved in~\cite[Corollary 2.7]{D5} (see also~\cite[Theorem 4.26]{dJ1}) that $\mathcal F$ extends uniquely  to  an isometry on $L^2(dw)$, i.e.,
   \begin{equation}\label{eq:Plancherel}
       \|f\|_{L^2(dw)}=\|\mathcal{F}f\|_{L^2(dw)} \text{ for all }f \in L^2(dw),
   \end{equation}
and preserves the Schwartz class of functions $\mathcal S(\mathbb{R}^N)$ (\cite{D5}, see also \cite{dJ1}).
Furthermore,  the following inversion formula holds (\cite[Theorem 4.20]{dJ1}): for all $f \in L^1(dw)$ such that $\mathcal{F}f \in L^1(dw)$ one has  
$f(\mathbf{x})=(\mathcal{F})^2f(-\mathbf{x}) \text{ for almost all }\mathbf{x} \in \mathbb{R}^N$.

\subsection{Dunkl translations}
Suppose that $f \in \mathcal{S}(\mathbb{R}^N)$.  The \textit{Dunkl translation }$\tau_{\mathbf{x}}f$ of $f$ is defined by
\begin{equation}\label{eq:translation}
    \tau_{\mathbf{x}} f(-\mathbf{y})=\mathbf{c}_k^{-1} \int_{\mathbb{R}^N}{E}(i\xi,\mathbf{x})\,{E}(-i\xi,\mathbf{y})\,\mathcal{F}f(\xi)\,{dw}(\xi)=\mathcal{F}^{-1}(E(i\,\cdot\,,\mathbf{x})\mathcal{F}f(\cdot) )(-\mathbf{y}).
\end{equation}
The Dunkl translation was introduced in~\cite{R1998}. The definition can be extended to functions which are not necessarily in $\mathcal{S}(\mathbb{R}^N)$. For instance, using Plancherel's theorem, one can define the Dunkl translation of an $L^2(dw)$ function $f$ by
\begin{equation}\label{eq:translation_Fourier}
    \tau_{\mathbf{x}}f(-\mathbf{y})=\mathcal{F}^{-1}(E(i\, \cdot\, ,\mathbf{x})\mathcal{F}f(\cdot))(-\mathbf{y})
\end{equation}
(see~\cite{R1998} and~\cite[Definition 3.1]{ThangaveluXu}). In particular, the operators $f \mapsto \tau_{\mathbf{x}}f$ are contractions on $L^2(dw)$. Here and henceforth, for a reasonable function $g(\mathbf{x})$, we  write $g(\mathbf x,\mathbf y):=\tau_{\mathbf x}g(-\mathbf y)$. 

 The following specific formula  for the Dunkl translations of (reasonable) radial functions $f({\mathbf{x}})=\widetilde{f}({\|\mathbf{x}\|})$ was obtained by R\"osler \cite{Roesler2003}:
\begin{equation}\label{eq:translation-radial}
\tau_{\mathbf{x}}f(-\mathbf{y})=\int_{\mathbb{R}^N}{(\widetilde{f}\circ A)}(\mathbf{x},\mathbf{y},\eta)\,d\mu_{\mathbf{x}}(\eta)\text{ for }\mathbf{x},\mathbf{y}\in\mathbb{R}^N.
\end{equation}
Here
\begin{equation*}
A(\mathbf{x},\mathbf{y},\eta)=\sqrt{{\|}\mathbf{x}{\|}^2+{\|}\mathbf{y}{\|}^2-2\langle \mathbf{y},\eta\rangle}=\sqrt{{\|}\mathbf{x}{\|}^2-{\|}\eta{\|}^2+{\|}\mathbf{y}-\eta{\|}^2}
\end{equation*}
and $\mu_{\mathbf{x}}$ is a probability measure, 
which is supported in the set $\operatorname{conv}\mathcal{O}(\mathbf{x})$,  where $\mathcal O(\mathbf x) =\{\sigma(\mathbf x): \sigma \in G\}$ is the orbit of $\mathbf x$.

\subsection{Dunkl convolution}
Assume that $f,g \in L^2(dw)$. The \textit{generalized convolution} (or the \textit{Dunkl convolution}) $f*g$ is defined by the formula
\begin{equation}\label{eq:conv_transform}
    f*g(\mathbf{x})=\mathbf{c}_k\mathcal{F}^{-1}\big((\mathcal{F}f)(\mathcal{F}g)\big)(\mathbf{x}),
\end{equation}
and equivalently, by
\begin{equation}\label{eq:conv_translation}
    (f*g)(\mathbf{x})=\int_{\mathbb{R}^N}f(\mathbf{y})\,\tau_{\mathbf{x}}g(-\mathbf{y})\,{dw}(\mathbf{y})=\int_{\mathbb{R}^N}g(\mathbf{y})\,\tau_{\mathbf{x}}f(-\mathbf{y})\,{dw}(\mathbf{y}).
\end{equation}
The generalized convolution of $f,g \in \mathcal{S}(\mathbb{R}^N)$ was considered in~\cite{R1998} and~\cite{Trimeche}, the definition was extended to wider classes of functions (see e.g.~\cite{ThangaveluXu}). 

In the proof of Theorem~\ref{teo:teo_main}, we will need the following lemma, {which is a consequence of \eqref{eq:Plancherel}} (see~\cite[page 8]{ThangaveluXu}).

\begin{lemma}\label{lem:ThangaveluXu}
    Let $f \in L^2(dw) \cap L^1(dw)$ and $g \in L^2(dw)$. Then
    \begin{equation*}
        \|f*g\|_{L^2(dw)} \leq \|f\|_{L^1(dw)}\|g\|_{L^2(dw)}.
    \end{equation*}
\end{lemma}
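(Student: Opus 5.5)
The estimate follows from Plancherel \eqref{eq:Plancherel} combined with the convolution formula \eqref{eq:conv_transform} and the pointwise bound $|E(i\xi,\mathbf x)|\le 1$ from \eqref{eq:E}. The idea is to show that the Dunkl transform of $f\in L^1(dw)$ is bounded by a multiple of $\|f\|_{L^1(dw)}$, and then pass through the Fourier side.

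First, for $f\in L^1(dw)$, the definition \eqref{eq:Dunkl_transform} together with \eqref{eq:E} gives
$$|\mathcal Ff(\xi)|\le \mathbf c_k^{-1}\int_{\mathbb R^N}|f(\mathbf x)|\,|E(\mathbf x,-i\xi)|\,dw(\mathbf x)\le \mathbf c_k^{-1}\|f\|_{L^1(dw)}.$$
Here I use $E(\mathbf x,-i\xi)=E(-i\xi,\mathbf x)$, which follows from the symmetry of the Dunkl kernel, and then \eqref{eq:E} with $\xi$ replaced by $-\xi$. Since $f\in L^2(dw)\cap L^1(dw)$, the $L^1$ definition of $\mathcal Ff$ agrees almost everywhere with the $L^2$ extension. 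This is the standard consistency check: approximate $f$ by Schwartz (or $L^1\cap L^2$) truncations that converge in both norms, and pass to an a.e. convergent subsequence.

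Next, by \eqref{eq:conv_transform} and Plancherel,
$$\|f*g\|_{L^2(dw)}=\mathbf c_k\|(\mathcal Ff)(\mathcal Fg)\|_{L^2(dw)}\le \mathbf c_k\|\mathcal Ff\|_{L^\infty}\|\mathcal Fg\|_{L^2(dw)}\le \|f\|_{L^1(dw)}\|g\|_{L^2(dw)}.$$
Note that $(\mathcal Ff)(\mathcal Fg)\in L^2(dw)$, because $\mathcal Ff$ is bounded and $\mathcal Fg\in L^2(dw)$. So $\mathcal F^{-1}$ of this product is well defined by the $L^2$ extension, and it is an isometry on $L^2(dw)$.

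The only delicate point is the identification of the $L^1$ and $L^2$ Dunkl transforms, which is routine given that $\mathcal F$ is an isometry on $L^2(dw)$ and preserves $\mathcal S(\mathbb R^N)$.
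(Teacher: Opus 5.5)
Your proof is correct and is essentially the argument the paper has in mind. The paper gives no details beyond ``a consequence of Plancherel'' and a pointer to Thangavelu--Xu, where exactly this combination appears: the uniform bound $|\mathcal Ff(\xi)|\le \mathbf c_k^{-1}\|f\|_{L^1(dw)}$ from $|E(i\xi,\mathbf x)|\le 1$, the identity $\mathcal F(f*g)=\mathbf c_k\,\mathcal Ff\,\mathcal Fg$, and Plancherel, with the constants $\mathbf c_k$ cancelling as you computed.
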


For $f \in \mathcal{S}(\mathbb{R}^N)$, $\mathbf{x},\mathbf{y} \in \mathbb{R}^N$ let us denote
\begin{equation*}
    f(\mathbf x,\mathbf y):=\tau_{\mathbf x}f(-\mathbf y).
\end{equation*}

The result concerning the support of generalized translation of compactly supported function was proved in~\cite{DzHe-JFA}.

\begin{theorem}\label{teo:support}
Let $f \in L^2(dw)$, $\supp\, f \subseteq B(0,r)$, and $\mathbf{x} \in \mathbb{R}^N$. Then  
 $$f(\mathbf x,\mathbf y)=\tau_{\mathbf x}f(-\mathbf y)=0 \quad \text{if \ \ } d(\mathbf x,\mathbf y)>r. $$
\end{theorem}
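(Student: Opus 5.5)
The plan is a duality argument: reduce to testing $\tau_{\mathbf x}f(-\mathbf y)$ against smooth functions supported away from the orbit of $B(\mathbf x,r)$, and rewrite that pairing through the Dunkl transform as the action of the adjoint translation on the test function. The tools are the Paley--Wiener type characterization of compactly supported functions via their Dunkl transforms, and the radial case, which R\"osler's formula \eqref{eq:translation-radial} settles directly.

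\textbf{Step 1: reduction to a duality statement.} Fix $\mathbf x$ and $r$. By \eqref{eq:translation_Fourier} and Plancherel, for $\varphi\in C_c^\infty(\mathbb R^N)$ we have
$$\int \tau_{\mathbf x}f(-\mathbf y)\varphi(\mathbf y)\,dw(\mathbf y)=\int f(\mathbf z)\,\tau_{\mathbf x}\check\varphi(\mathbf z)\,dw(\mathbf z)$$
up to harmless reflections and conjugations. Here the roles can be swapped because the symbol $E(i\xi,\mathbf x)$ is the same on both sides. It therefore suffices to prove the following: if $\varphi$ is supported in $\{\mathbf y: d(\mathbf x,\mathbf y)>r\}$, then $\tau_{\mathbf x}\varphi(-\cdot)$ vanishes on $B(0,r)$.

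\textbf{Step 2: the radial case.} For $\varphi$ radial, formula \eqref{eq:translation-radial} gives $\tau_{\mathbf x}\varphi(-\mathbf y)=\int\widetilde\varphi(A(\mathbf x,\mathbf y,\eta))\,d\mu_{\mathbf x}(\eta)$. Since $\eta\in\operatorname{conv}\mathcal O(\mathbf x)$, we get $A(\mathbf x,\mathbf y,\eta)\ge \min_{\sigma}\|\sigma(\mathbf x)-\mathbf y\|=d(\mathbf x,\mathbf y)$. This holds because $\langle \mathbf y,\eta\rangle\le\max_\sigma\langle \mathbf y,\sigma(\mathbf x)\rangle$, and $\|\eta\|$ plays no role in the first expression for $A$. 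Consequently, if $\supp\varphi\subset B(0,r)$, then $\tau_{\mathbf x}\varphi(-\mathbf y)=0$ whenever $d(\mathbf x,\mathbf y)>r$. This is the theorem for radial $f$.

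\textbf{Step 3: passage to non-radial $f$.} I would first use approximation to reduce to $f\in C_c^\infty(B(0,r'))$ for $r'$ slightly larger than $r$. Then I would write $\mathcal Ff=\mathcal F\Phi\cdot P$, or more concretely express $f$ through radial functions and Dunkl operators. One option is to pass to the Dunkl heat or Paley--Wiener representation: $\mathcal Ff$ extends to an entire function of exponential type $r$, with bounds $|\mathcal Ff(\zeta)|\le C(1+|\zeta|)^{-M}e^{r|\operatorname{Im}\zeta|}$. Then $\tau_{\mathbf x}f(-\mathbf y)=\mathcal F^{-1}(E(i\cdot,\mathbf x)\mathcal Ff)(-\mathbf y)$, and the symbol $E(i\xi,\mathbf x)$ has exponential type $\|\mathbf x\|$. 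A plain Paley--Wiener argument for the product would only give support in $B(0,r+\|\mathbf x\|)$, which is too weak. The sharper orbit-type information has to come from Step 2. I would test $\tau_{\mathbf x}f(-\cdot)$ against functions $\tau_{\mathbf y}\phi_\varepsilon$, where $\phi_\varepsilon$ is a radial approximate identity supported in $B(0,\varepsilon)$. By the symmetry of translations,
$$\langle\tau_{\mathbf x}f(-\cdot),\tau_{\mathbf y}\phi_\varepsilon(-\cdot)\rangle \;=\; \int f(\mathbf z)\,(\tau_{\mathbf x}\tau_{\mathbf y}\phi_\varepsilon)(\mathbf z)\,dw(\mathbf z).$$
Then I would apply the radial support result of Step 2 to the iterated translation, iterating on the radial function $\tau_{\mathbf y}\phi_\varepsilon$ viewed through R\"osler's formula, and let $\varepsilon\to0$.

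The main obstacle is this last step. The composite $\tau_{\mathbf x}\tau_{\mathbf y}\phi_\varepsilon$ is no longer radial, and controlling its support by orbit distance is precisely the non-radial difficulty. If this route stalls, I would fall back on a wave-equation and finite-propagation-speed argument. The Dunkl wave operator $\cos(t\sqrt{-\Delta_k})$ has finite propagation speed in the orbit distance, so one can represent the translated function via the spectral calculus of $\Delta_k$ and transfer the support information from radial kernels to $f$.
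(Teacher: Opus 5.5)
Your Steps 1 and 2 are correct, but the only nontrivial case of the theorem, non-radial $f$, is left open. Step 3 lists possible routes without carrying any of them out. (The paper gives no proof of its own here; it quotes the result from \cite{DzHe-JFA}.)

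Step 1 is a valid identity: indeed $\int\tau_{\mathbf x}f(-\mathbf y)\varphi(\mathbf y)\,dw(\mathbf y)=\int E(i\xi,\mathbf x)\,\mathcal Ff(\xi)\,\mathcal F\varphi(\xi)\,dw(\xi)$ is symmetric in $f$ and $\varphi$. But it is an equivalent restatement, not a reduction. It replaces a function supported in a ball centred at $0$ by one supported off an orbit, and neither Paley--Wiener nor R\"osler's formula applies to the latter.

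Step 2 is the known radial case, and your bound $A(\mathbf x,\mathbf y,\eta)\ge d(\mathbf x,\mathbf y)$ for $\eta\in\operatorname{conv}\mathcal O(\mathbf x)$ is right.

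In Step 3, testing against $\tau_{\mathbf y}\phi_\varepsilon$ is circular, as you say. The wave-equation fallback does not help by itself. Finite propagation speed in the orbit distance concerns kernels of functions of $\Delta_k$, i.e.\ multipliers radial in $\xi$, which is the same information Step 2 already gives. The symbol $E(i\xi,\mathbf x)\mathcal Ff(\xi)$ is not radial. ``Transferring the support information from radial kernels to $f$'' is the whole content of the theorem, and you propose no mechanism for it.

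The idea you mention and then drop, writing $f$ through radial functions and Dunkl operators, is one way to close the gap. It rests on two facts:
(i) $\tau_{\mathbf x}T_j=T_j\tau_{\mathbf x}$ on $C_c^\infty$, since both are Dunkl multipliers (with symbols $E(i\xi,\mathbf x)$ and $i\xi_j$);
(ii) $T_j$ maps smooth functions vanishing on a $G$-invariant open set to functions vanishing there, because $T_ju(\mathbf z)$ involves only $u$ near $\mathbf z$ and at the points $\sigma_\alpha(\mathbf z)$.
The set $U_{\mathbf x}=\{\mathbf z: d(\mathbf x,-\mathbf z)>r\}$ is $G$-invariant, so (ii) applies to it.

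Concretely, proceed as follows.
\begin{enumerate}[(1)]
\item Expand $f(s\omega)=\sum_{n,j}f_{n,j}(s)Y_{n,j}(\omega)$ in an orthonormal basis of $h$-harmonics of $L^2(S^{N-1},w\,d\sigma)$.
\item Approximate each term in $L^2(dw)$ by $\theta(\|\mathbf z\|)Y_{n,j}(\mathbf z)$ with $\theta\in C_c^\infty((0,r))$.
\item Use the Dunkl form of Hobson's formula: for $Y$ $h$-harmonic and homogeneous of degree $n$,
$Y(T)[\phi_0(\|\cdot\|)](\mathbf z)=Y(\mathbf z)\big((\tfrac1s\tfrac{d}{ds})^n\phi_0\big)(\|\mathbf z\|)$.
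This is proved by induction on $n$ exactly as in the classical case, from $T_j(uG)=(T_ju)G+u\,\partial_jG$ for $G$-invariant $G$ and $[\Delta_k,z_j]=2T_j$.
\item Solve $(\tfrac1s\tfrac{d}{ds})^n\phi_0=\theta$ by integrating inward from $s=r$. This gives a radial $\Phi=\phi_0(\|\cdot\|)\in C_c^\infty(B(0,r))$ with $\theta(\|\mathbf z\|)Y_{n,j}(\mathbf z)=Y_{n,j}(T)\Phi$.
\item By Step 2, $\tau_{\mathbf x}\Phi$ vanishes on $U_{\mathbf x}$. By (i) and (ii), so does $\tau_{\mathbf x}(Y_{n,j}(T)\Phi)=Y_{n,j}(T)\tau_{\mathbf x}\Phi$.
\item Since $\tau_{\mathbf x}$ is a contraction on $L^2(dw)$, the vanishing passes to $f$ in the limit.
\end{enumerate}
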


\subsection{Dunkl Laplacian, Dunkl heat semigroup and Dunkl heat kernel}
The \textit{Dunkl Laplacian} associated with $\mathcal{R}$ and $k$  is the differential-difference operator 
\begin{equation}\label{eq:Dunkl_laplacian_form}
\Delta_k=\sum_{j=1}^N T_{j}^2.    
\end{equation}
It was introduced in~\cite{Dunkl}, where it was also proved that $\Delta_k$ acts on $C^2(\mathbb{R}^N)$ functions by
\begin{equation}\label{eq:laplace_formula}
    \Delta_k f(\mathbf x)=\Delta f(\mathbf x)+\sum_{\alpha\in \mathcal{R}} k(\alpha) \delta_\alpha f(\mathbf x), \text{ where }\delta_\alpha f(\mathbf x)=\frac{\partial_\alpha f(\mathbf x)}{\langle \alpha , \mathbf x\rangle} -  \frac{f(\mathbf x)-f(\sigma_\alpha (\mathbf x))}{\langle \alpha, \mathbf x\rangle^2}.
\end{equation}
Here $\Delta=\sum_{j=1}^{N}\partial_j^2$. It follows from~\eqref{eq:T_j_transform} that for all $\xi \in \mathbb{R}^N$ and $f \in \mathcal{S}(\mathbb{R}^N)$, we have
\begin{equation}\label{eq:Laplacian_on_Fourier_side}
    \mathcal{F}(\Delta_{k}f)(\xi)=-\|\xi\|^2\mathcal{F}f(\xi).
\end{equation} 
The operator $(-\Delta_{k},\mathcal{S}(\mathbb{R}^N))$ in $L^2(dw)$ is densely defined and closable. It is essentially self-adjoint on $L^2(dw)$ (see, for instance, \cite[Theorem\;3.1]{AH}).

The closure of $(-\Delta_{k},\mathcal{S}(\mathbb{R}^N))$ generates a strongly continuous and positivity-preserving contraction semigroup on $L^2(dw)$,  which is given by
 \begin{equation}\label{eq:heat_semigroup}
  H_t f(\mathbf x)=f*h_t(\mathbf x)=\int_{\mathbb{R}^N} h_t(\mathbf x,\mathbf y)f(\mathbf y)\, dw(\mathbf y), 
  \end{equation}
where 
 \begin{equation}\label{eq:heat_kernel} \ h_t(\mathbf x,\mathbf y)=h_t(\mathbf y,\mathbf x)={\mathbf c}_k^{-1} (2t)^{-\mathbf{N}/ 2}
 e^{-(\|\mathbf x\|^2+\|
 \mathbf y\|^2)\slash (4t)}E\left(\frac{\mathbf x}{\sqrt{2t}},\frac{\mathbf y}{\sqrt{2t}}\right)
 \end{equation}
 is the so-called \textit{generalized heat kernel} (or the \textit{Dunkl heat kernel}), see~\cite{R1998} .  The integral kernels $h_t(\mathbf x,\mathbf y)$ are the generalized translations of the Schwartz-class functions:
 $$ h_t(\mathbf x,\mathbf y)=\tau_{\mathbf x}h_t(-\mathbf y), \quad h_t(\mathbf x)={\mathbf c}_k^{-1} (2t)^{-\mathbf{N}/2} e^{-\|\mathbf x\|^2/4t}. $$
 Moreover, for all $t>0$ and $\mathbf{x},\mathbf{y} \in \mathbb{R}^N$,  one has $h_t(\mathbf{x},\mathbf{y})>0$, and $\int_{\mathbb{R}^N}h_t(\mathbf{x},\mathbf{z})\,dw(\mathbf{z})=1$.
 Formula~\eqref{eq:heat_semigroup} defines  contraction semigroups on the $L^p(dw)$-spaces, $1\leq p\leq  \infty$, which are  strongly continuous for $1\leq p<\infty$. 
  The semigroup $\{H_t\}_{t>0}$ on $L^2(dw)$ can be expressed by means of the Dunkl transform, that is,  
\begin{equation}\label{eq:heat_transform}
    \mathcal{F}(H_tf)(\xi){=\mathcal F(h_t*f)(\xi)=\mathbf c_k \mathcal F(h_t)(\xi)\mathcal Ff(\xi)}=e^{-t\|\xi\|^2}\mathcal{F}f(\xi),\quad f\in L^2(dw).
\end{equation} 
 Note that in the case $k \equiv 0$ the Dunkl heat kernel is the classical heat kernel.

 \subsection{Estimates for generalized translations of some functions}
 
 In order to prove an atomic characterization of Hardy space, we need  bounds for the Dunkl heat kernel and its derivatives. 
 
 For $\mathbf{x},\mathbf{y} \in \mathbb{R}^N$ and $t,r>0$, we denote
\begin{equation}\label{eq:V}
    V(\mathbf{x},\mathbf{y},r):=\max\{w(B(\mathbf{x},r)),w(B(\mathbf{y},r))\}, \ \ \mathcal G_t(\mathbf x,\mathbf y)=\frac{1}{V(\mathbf x,\mathbf y,\sqrt{t})}e^{-\frac {d(\mathbf x,\mathbf y)^2}{t}}.
\end{equation} 

The following theorem was proved in~\cite{DH-atom}, see also~\cite[Theorem 4.1]{ADzH}.
For more detailed upper and lower bounds for $h_t(\mathbf x,\mathbf y)$ we refer the reader to~\cite{DH-heat}.

\begin{theorem}[Theorem 4.1, \texorpdfstring{~\cite{DH-atom}}{[DH]}]\label{teo:heat_new}   For every nonnegative integer $m$ and for all multi-indices $\boldsymbol{\alpha},\boldsymbol{\beta} \in \mathbb{N}_0^N$ there are constants $C_{m,\boldsymbol{\alpha},\boldsymbol{\beta}}, c>0$ such that
  \begin{equation}\label{eq:heat2} |\partial_t^m \partial_{\mathbf x}^{\boldsymbol{\alpha}}\partial_{\mathbf y}^{\boldsymbol{\beta}} h_t(\mathbf{x},\mathbf{y})|
  \leq C_{m,\boldsymbol{\alpha},\boldsymbol{\beta}} t^{-m-\frac{|\boldsymbol{\alpha}|}{2}-\frac{|\boldsymbol{\beta}|}{2}} \Big(1+\frac{\| \mathbf x-\mathbf y\|}{\sqrt{t}}\Big)^{-2} \mathcal G_{t\slash c} (\mathbf x,\mathbf y).
  \end{equation}
\end{theorem}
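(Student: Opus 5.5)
The statement immediately above is Theorem~\ref{teo:heat_new}, which is quoted from~\cite{DH-atom}. The natural plan is therefore to reprove it from the explicit formula \eqref{eq:heat_kernel} together with the known estimates for the Dunkl kernel $E(\mathbf x,\mathbf y)$.

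\textbf{Step 1: scaling.} Since $h_t(\mathbf x,\mathbf y)=t^{-\mathbf N/2}h_1(\mathbf x/\sqrt t,\mathbf y/\sqrt t)$, each $\partial_t$ costs $t^{-1}$ and each spatial derivative costs $t^{-1/2}$. By \eqref{eq:t_ball}, $V(\mathbf x,\mathbf y,\sqrt t)=t^{\mathbf N/2}V(\mathbf x/\sqrt t,\mathbf y/\sqrt t,1)$. Hence it suffices to prove the bound at $t=1$. Time derivatives reduce to spatial ones via $\partial_t h_t=\Delta_{k,\mathbf x}h_t$: expanding $\Delta_k$ by \eqref{eq:laplace_formula} produces derivatives plus difference quotients. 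Alternatively, and more cleanly, differentiate the scaling formula in $t$, which gives a combination of $\mathbf x\cdot\nabla_{\mathbf x}$, $\mathbf y\cdot\nabla_{\mathbf y}$ and constants. This costs polynomial factors in $\|\mathbf x\|,\|\mathbf y\|$, so the route via $\Delta_k$ is preferable. The difference parts are written as integrals of derivatives along segments, and are controlled because $h_t$ restricted to the orbit of $\mathbf x$ has comparable bounds.

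\textbf{Step 2: the Gaussian factor with orbit distance.} Here I would use R\"osler's representation
\[E(\mathbf x,\mathbf y)=\int e^{\langle \eta,\mathbf y\rangle}\,d\mu_{\mathbf x}(\eta),\qquad \operatorname{supp}\mu_{\mathbf x}\subset\operatorname{conv}\mathcal O(\mathbf x).\]
This gives $h_1(\mathbf x,\mathbf y)=c\int e^{-A(\mathbf x,\mathbf y,\eta)^2/4}\,d\mu_{\mathbf x}(\eta)$, and $A(\mathbf x,\mathbf y,\eta)\ge d(\mathbf x,\mathbf y)$ on the support of $\mu_{\mathbf x}$. Derivatives in $\mathbf y$ pass under the integral; they produce polynomial factors of $A$ and its derivatives, which the Gaussian absorbs at the cost of changing $c$. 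Derivatives in $\mathbf x$ follow by symmetry, since $h_t(\mathbf x,\mathbf y)=h_t(\mathbf y,\mathbf x)$.

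\textbf{Step 3: the volume factor.} The factor $1/V(\mathbf x,\mathbf y,1)$ is the genuinely hard part. Crude bounds only give $h_1\lesssim e^{-cd^2}$, whereas we need the decay $w(B(\mathbf x,1))^{-1}$, which requires the sharp estimates of~\cite{DH-heat} or~\cite{ADzH}. The approach I would try is the semigroup property $h_2(\mathbf x,\mathbf y)=\int h_1(\mathbf x,\mathbf z)h_1(\mathbf z,\mathbf y)\,dw(\mathbf z)$ combined with Cauchy--Schwarz. The diagonal estimate $h_1(\mathbf x,\mathbf x)\lesssim w(B(\mathbf x,1))^{-1}$ is the key input. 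It follows from $\int h_{1/2}(\mathbf x,\mathbf z)^2\,dw(\mathbf z)$, the Gaussian bound, and a lower estimate of the mass near $\mathbf x$ (a Harnack-type argument, or directly the results of~\cite{DH-heat}). One must then interpolate between the on-diagonal bound and the orbit-Gaussian decay to obtain $\mathcal G_{1/c}$. For derivatives the same argument is applied through $\partial^{\boldsymbol\alpha}_{\mathbf x}h_2=\int\partial^{\boldsymbol\alpha}_{\mathbf x}h_1(\mathbf x,\mathbf z)\,h_1(\mathbf z,\mathbf y)\,dw(\mathbf z)$.

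\textbf{Step 4: the extra factor $(1+\|\mathbf x-\mathbf y\|)^{-2}$.} This is the main obstacle: it measures Euclidean rather than orbit distance. It is nontrivial precisely when $d(\mathbf x,\mathbf y)$ is small but $\|\mathbf x-\mathbf y\|$ is large, i.e.\ when $\mathbf y$ is near $\sigma(\mathbf x)$ for some $\sigma\ne\mathrm{id}$. The idea is to iterate the Dunkl kernel's reflection structure. Write $E$ via $\mu_{\mathbf x}$ and show that the $\mu_{\mathbf x}$-mass near the extreme points $\sigma(\mathbf x)$, $\sigma\neq \mathrm{id}$, is small, of order $\prod\big(1+|\langle\alpha,\mathbf x\rangle|\big)^{-1}$-type quantities. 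Equivalently, one can use the identity $T_\xi$ applied to $h_t$ together with the difference term $k(\alpha)\big(h(\mathbf x)-h(\sigma_\alpha\mathbf x)\big)/\langle\alpha,\mathbf x\rangle$ to gain a factor $\|\mathbf x-\mathbf y\|^{-1}$ twice. I expect this gain, and uniformity in the derivatives, to be the delicate part. In the proposal I would follow~\cite{DH-atom}, who derive it from the precise heat kernel asymptotics.
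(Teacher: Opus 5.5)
The paper gives no proof of this statement; it is quoted from \cite{DH-atom} (see also \cite[Theorem 4.1]{ADzH}). Your Steps 1--3 are, modulo details, a reasonable sketch of the purely Gaussian part, i.e.\ the bound with $\mathcal G_{t/c}(\mathbf x,\mathbf y)$ but without $(1+\|\mathbf x-\mathbf y\|/\sqrt t)^{-2}$. The ingredients are as follows. R\"osler's formula with $A(\mathbf x,\mathbf y,\eta)\ge d(\mathbf x,\mathbf y)$ on $\operatorname{conv}\mathcal O(\mathbf x)$ gives $h_t(\mathbf x,\mathbf y)\le C e^{-d(\mathbf x,\mathbf y)^2/8t}h_{2t}(\mathbf x,\mathbf y)$. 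The inequality $A(\mathbf x,\mathbf y,\eta)^2\le 2A(\mathbf x,\mathbf x,\eta)^2+2\|\mathbf x-\mathbf y\|^2$ gives $h_t(\mathbf x,\mathbf y)\ge c\,h_{2t}(\mathbf x,\mathbf x)$ for $\|\mathbf x-\mathbf y\|<\sqrt t$, hence the on-diagonal bound via $\int h_t(\mathbf x,\cdot)\,dw=1$. The semigroup property with Cauchy--Schwarz handles the volume factor and mixed derivatives, and $t$-derivatives are easiest taken directly under R\"osler's integral.

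However, the only part of the statement that goes beyond the standard Gaussian estimate is the factor $(1+\|\mathbf x-\mathbf y\|/\sqrt t)^{-2}$, and you do not prove it. Step 4 names two heuristics and then defers to the reference, so the gap sits exactly where the content of the theorem lies. Your first heuristic is not workable in general: for an arbitrary reflection group there is no quantitative description of $\mu_{\mathbf x}$ near the extreme points $\sigma(\mathbf x)$, only of its support. Also, no precise heat kernel asymptotics are needed.

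The missing idea is the identity
\[
T_{j,\mathbf y}h_t(\mathbf x,\mathbf y)=\frac{x_j-y_j}{2t}\,h_t(\mathbf x,\mathbf y),
\]
which follows from \eqref{eq:heat_kernel}. The factor $e^{-\|\mathbf y\|^2/4t}$ is $G$-invariant, so $T_{j,\mathbf y}$ obeys the ordinary product rule with it; moreover $T_{j,\mathbf y}E(\mathbf x,\mathbf y)=x_jE(\mathbf x,\mathbf y)$ and $T_j$ is homogeneous of degree $-1$. Expanding $T_{j,\mathbf y}$ by \eqref{eq:T_xi}, $(x_j-y_j)h_t(\mathbf x,\mathbf y)$ equals $2t\,\partial_{y_j}h_t(\mathbf x,\mathbf y)$ plus $t\sum_{\alpha}k(\alpha)\alpha_j$ times the quotient $\big(h_t(\mathbf x,\mathbf y)-h_t(\mathbf x,\sigma_\alpha\mathbf y)\big)/\langle\alpha,\mathbf y\rangle$. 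Treat the quotient in two regimes:
\begin{itemize}
\item if $|\langle\alpha,\mathbf y\rangle|\le\sqrt t$, write it as $\int_0^1\langle\nabla_{\mathbf y}h_t(\mathbf x,\mathbf y-s\langle\alpha,\mathbf y\rangle\alpha),\alpha\rangle\,ds$, an average over a segment of length at most $\sqrt{2t}$;
\item if $|\langle\alpha,\mathbf y\rangle|>\sqrt t$, bound the two terms separately, using $\mathcal G_t(\mathbf x,\sigma_\alpha\mathbf y)=\mathcal G_t(\mathbf x,\mathbf y)$.
\end{itemize}
Together with the Gaussian bounds this yields one factor $(1+\|\mathbf x-\mathbf y\|/\sqrt t)^{-1}$.

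In a second pass, the critical term $h_t(\mathbf x,\sigma_\alpha\mathbf y)/\langle\alpha,\mathbf y\rangle$ is controlled by the factor already gained, via $1+\frac{\|\mathbf x-\mathbf y\|}{\sqrt t}\le\big(1+\frac{\|\mathbf x-\sigma_\alpha\mathbf y\|}{\sqrt t}\big)\big(1+\frac{\sqrt2|\langle\alpha,\mathbf y\rangle|}{\sqrt t}\big)$. This gives the second factor. A third pass breaks down for $\mathbf y$ near $\sigma_\alpha(\mathbf x)$, which is why the exponent is $2$; it is already sharp in rank one.

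For derivatives, apply $\partial_t^m$ and $\partial^{\boldsymbol\alpha}_{\mathbf x}$, which commute with $T_{j,\mathbf y}$, and $\partial^{\boldsymbol\beta}_{\mathbf y}$, treating the differentiated quotients in the same two regimes; then induct.

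This also shows that your Step 1 justification for the difference terms of $\Delta_k$ is wrong once the Euclidean factor is included. $h_t$ does not have comparable bounds along the orbit, since $h_t(\mathbf x,\sigma_\alpha\mathbf y)$ does not carry $(1+\|\mathbf x-\mathbf y\|/\sqrt t)^{-2}$. What compensates is the division by $\langle\alpha,\mathbf y\rangle$.
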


 We have the following estimate for generalized translations of the  Schwartz class functions.
 
\begin{theorem}{\texorpdfstring{\cite[Theorem 4.1 and Remark 4.2]{ADzH}}{\cite[Theorem 4.1 and Remark 4.2]{ADzH}}}\label{teo:translacja}
     Let $\varphi \in \mathcal{S}(\mathbb{R}^N)$ and $M>0$. For $t>0$ set  $  \varphi_t(\mathbf x)=t^{-\mathbf{N}}\varphi(\mathbf x/t)$. There is  $C>0$ such that for all $\mathbf{x},\mathbf{y} \in \mathbb{R}^N$ and $t>0$, we have 
   \begin{equation}\label{eq:g_t}
       |\varphi_t(\mathbf{x},\mathbf{y})| \leq C\left(1+\frac{\|\mathbf{x}-\mathbf{y}\|}{t}\right)^{-1}\Big(1+\frac{d(\mathbf x,\mathbf y)}{t}\Big)^{-M} \frac{1}{w(B(\mathbf{x},t))}.
   \end{equation}
\end{theorem}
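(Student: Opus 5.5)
The statement to be proved is Theorem~\ref{teo:translacja}, a pointwise bound for Dunkl translates of dilated Schwartz functions. I would reduce it to $t=1$ by scaling and then combine two ingredients. The factor $(1+d(\mathbf x,\mathbf y)/t)^{-M}$ together with $w(B(\mathbf x,t))^{-1}$ will come from writing $\varphi$ as a superposition of heat kernels. The extra factor $(1+\|\mathbf x-\mathbf y\|/t)^{-1}$ will come from the $(1+\|\mathbf x-\mathbf y\|/\sqrt t)^{-2}$ gain in Theorem~\ref{teo:heat_new}.

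\textbf{Scaling.} The Dunkl kernel satisfies $E(\lambda\mathbf x,\mathbf y)=E(\mathbf x,\lambda\mathbf y)$, and $dw$ is homogeneous of degree $\mathbf N-N$. From these one checks that $\varphi_t(\mathbf x,\mathbf y)=t^{-\mathbf N}\varphi(\mathbf x/t,\mathbf y/t)$. By \eqref{eq:t_ball}, $w(B(\mathbf x,t))=t^{\mathbf N}w(B(\mathbf x/t,1))$. Hence it suffices to prove, for $t=1$,
$$|\varphi(\mathbf x,\mathbf y)|\leq C(1+\|\mathbf x-\mathbf y\|)^{-1}(1+d(\mathbf x,\mathbf y))^{-M}w(B(\mathbf x,1))^{-1}.$$

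\textbf{Representation by heat kernels.} Set $\psi=\mathcal F\varphi\in\mathcal S$, so that $\varphi(\mathbf x,\mathbf y)=\mathbf c_k^{-1}\int E(i\xi,\mathbf x)E(-i\xi,\mathbf y)\psi(\xi)\,dw(\xi)$. Let $g(\xi)=e^{\|\xi\|^2}\psi(\xi)$; this is not Schwartz, so I would instead use a decomposition $\psi(\xi)=\sum_j\int a_j(s)e^{-s\|\xi\|^2}\eta_j(\xi)\,ds$. A cleaner alternative is the following. Write $\psi(\xi)=e^{-\|\xi\|^2}\Phi(\xi)$ with $\Phi=e^{\|\xi\|^2}\psi$ restricted to a dyadic partition $\psi=\sum_{j\ge0}\psi_j$, where $\psi_j$ is supported in $\|\xi\|\sim2^j$ and $\|\psi_j\|_{C^K}\lesssim 2^{-jL}$ for every $L$. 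Then $\varphi=\sum_j\varphi^{(j)}$, and $\varphi^{(j)}=\mathbf c_k\,h_{s_j}*\mathcal F^{-1}(e^{s_j\|\xi\|^2}\psi_j)$ with $s_j=4^{-j}$. The second factor has an $L^1(dw)$-norm that is rapidly decaying in $j$.

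\textbf{Estimating each piece.} For each piece, $\varphi^{(j)}(\mathbf x,\mathbf y)=\int h_{s_j}(\mathbf x,\mathbf z)\,\tau_{\mathbf z}\chi_j(-\mathbf y)\,dw(\mathbf z)$, where $\chi_j$ is compactly concentrated in space. Apply Theorem~\ref{teo:heat_new} to $h_{s_j}(\mathbf x,\mathbf z)$. It supplies the Gaussian in $d(\mathbf x,\mathbf z)$, the volume factor, and the gain $(1+\|\mathbf x-\mathbf z\|/\sqrt{s_j})^{-2}\le(1+\|\mathbf x-\mathbf z\|)^{-2}$. To control $\chi_j(\mathbf z,\mathbf y)$, split $\chi_j$ into compactly supported pieces via a partition of unity in physical space. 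Theorem~\ref{teo:support} then says the piece supported in $B(0,2^l)$ contributes only when $d(\mathbf z,\mathbf y)\le2^l$. Its size is bounded through $L^2$-contractivity of $\tau$ and Lemma~\ref{lem:ThangaveluXu}, with Sobolev embedding (Dunkl operators) used to upgrade to pointwise bounds. Combining the two factors via the triangle inequality for $d$ gives $(1+d(\mathbf x,\mathbf y))^{-M}$ up to losses in $2^{l}$ and $2^j$, which the rapid decay absorbs. Doubling \eqref{eq:growth} converts $V(\mathbf x,\mathbf z,\cdot)$ to $w(B(\mathbf x,1))$.

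\textbf{Main obstacle.} The hard part is keeping the Euclidean factor $(1+\|\mathbf x-\mathbf y\|)^{-1}$ after integrating in $\mathbf z$. The support theorem only controls $d(\mathbf z,\mathbf y)$, not $\|\mathbf z-\mathbf y\|$, so $\|\mathbf x-\mathbf y\|$ cannot be passed through $\mathbf z$ by the triangle inequality. To handle this I would reverse the roles of the two factors. Put the heat kernel on the $\mathbf y$-side, using the symmetry $\varphi(\mathbf x,\mathbf y)=\varphi(\mathbf y,\mathbf x)$ up to $\varphi\mapsto\varphi(-\cdot)$. Then use the fact that the decaying factor in Theorem~\ref{teo:heat_new} is at least $\|\mathbf x-\mathbf y\|^{-2}$ times a function of $d$ only. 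This leaves a term in which the loss can be traded against one power of $(1+\|\mathbf x-\mathbf y\|)$; this is why only exponent $1$ survives. Rigorously carrying out this trade is the step I expect to be delicate.
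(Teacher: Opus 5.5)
The paper does not prove this statement; it quotes it from \cite{ADzH}. Judged on its own, your proposal has a genuine gap exactly at the step you call delicate, and the trade you suggest cannot work.

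In $\varphi^{(j)}(\mathbf x,\mathbf y)=\int h_{s_j}(\mathbf x,\mathbf z)\chi_j(\mathbf z,\mathbf y)\,dw(\mathbf z)$, the gain from Theorem~\ref{teo:heat_new} is $(1+\|\mathbf x-\mathbf z\|)^{-2}$. This is a function of the integration variable. It is not ``$\|\mathbf x-\mathbf y\|^{-2}$ times a function of $d$''. Theorem~\ref{teo:support} only places $\mathbf z$ in $\mathcal O(B(\mathbf y,2^l))=\bigcup_{\sigma\in G}B(\sigma(\mathbf y),2^l)$. On a piece $B(\sigma(\mathbf y),2^l)$ with $\sigma\neq\mathrm{id}$, the gain controls only $\|\mathbf x-\sigma(\mathbf y)\|$. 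That quantity stays bounded when $\mathbf x$ is near $\sigma(\mathbf y)$, even while $\|\mathbf x-\mathbf y\|\to\infty$. Making that piece small would require Euclidean decay of $\chi_j(\mathbf z,\mathbf y)$ for $\mathbf z$ near $\sigma(\mathbf y)$, which is the statement itself. Swapping the roles of the factors, or sandwiching $\chi_j$ between two heat kernels, only moves the problem, because the middle factor sees $d$ alone.

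The missing idea is an exact identity that converts $x_j-y_j$ into Dunkl translates of other Schwartz functions. On the Fourier side, multiplication by $y_j$ corresponds to $T_{j,\xi}$ acting on $E(i\xi,\mathbf x)\mathcal F\varphi(\xi)$. The twisted Leibniz rule for $T_j$, together with $E(i\sigma_\alpha(\xi),\mathbf x)=E(i\xi,\sigma_\alpha(\mathbf x))$ and skew-symmetry, gives
\[
i(y_j-x_j)\,\varphi(\mathbf x,\mathbf y)=\varphi^{[j]}(\mathbf x,\mathbf y)+\sum_{\alpha\in\mathcal R}\frac{k(\alpha)}{2}\langle\alpha,e_j\rangle\,\varphi^{[\alpha]}(\sigma_\alpha(\mathbf x),\mathbf y).
\]
Here $\mathcal F\varphi^{[j]}=\partial_j\mathcal F\varphi$ and $\mathcal F\varphi^{[\alpha]}(\xi)=\big(\mathcal F\varphi(\xi)-\mathcal F\varphi(\sigma_\alpha(\xi))\big)/\langle\alpha,\xi\rangle$; both are Schwartz. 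We have $d(\sigma_\alpha(\mathbf x),\mathbf y)=d(\mathbf x,\mathbf y)$ and $w(B(\sigma_\alpha(\mathbf x),1))=w(B(\mathbf x,1))$. So the bound without the Euclidean factor, applied to these finitely many functions, yields $\|\mathbf x-\mathbf y\|\,|\varphi(\mathbf x,\mathbf y)|\lesssim w(B(\mathbf x,1))^{-1}(1+d(\mathbf x,\mathbf y))^{-M}$. The identity cannot be iterated, since the reflected terms decay only in $\|\sigma_\alpha(\mathbf x)-\mathbf y\|$. That, not a trade against Theorem~\ref{teo:heat_new}, is why the exponent is $1$.

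Your route to that preliminary bound also loses the volume factor. With a single heat kernel and sup-norm (Sobolev) bounds on $\chi_{j,l}(\mathbf z,\mathbf y)$, you end up integrating $h_{s_j}(\mathbf x,\cdot)$ over $\mathcal O(B(\mathbf y,2^l))$. That integral is of order one when $\mathbf x$ is near $\mathbf y$, so no factor $w(B(\mathbf x,1))^{-1}$ survives. Cauchy--Schwarz with one heat kernel gives only $w(B(\mathbf x,1))^{-1/2}$.

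The remedy is to put heat kernels on both sides. Write
\[
\varphi^{(j)}_l(\mathbf x,\mathbf y)=c\iint h_{s_j}(\mathbf x,\mathbf z)\chi_{j,l}(\mathbf z,\mathbf u)h_{s_j}(\mathbf u,\mathbf y)\,dw(\mathbf z)\,dw(\mathbf u),
\]
that is, $\varphi^{(j)}=c\,h_{s_j}*h_{s_j}*\chi_j$ with $\chi_j$ built from $e^{2s_j\|\xi\|^2}\psi_j$. Lemma~\ref{lem:ThangaveluXu} then gives
\[
|\varphi^{(j)}_l(\mathbf x,\mathbf y)|\le \|h_{s_j}(\mathbf x,\cdot)\|_{L^2(dw)}\,\|\chi_{j,l}\|_{L^1(dw)}\,\|h_{s_j}(\cdot,\mathbf y)\|_{L^2(dw)}.
\]
If $d(\mathbf x,\mathbf y)>3\cdot 2^l$, Theorem~\ref{teo:support} forces $d(\mathbf x,\mathbf z)$ or $d(\mathbf u,\mathbf y)$ to exceed $d(\mathbf x,\mathbf y)/3$. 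One of the two $L^2$ norms can then be taken where the Gaussian factor is small. Finally, $\big(w(B(\mathbf x,1))\,w(B(\mathbf y,1))\big)^{-1/2}$ converts to $w(B(\mathbf x,1))^{-1}$ at the cost of a power of $1+d(\mathbf x,\mathbf y)$. Your scaling reduction and dyadic heat-kernel decomposition are fine.
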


For a positive integer $M$ we denote the domain of the operator $\Delta_k$ by

\begin{equation}\label{eq:domain}
    \mathcal{D}(\Delta_k^M)=\{f\in L^2(dw): \int_{\mathbb{R}^N} |\mathcal Ff(\xi)|^2\|\xi\|^{4M}\, dw(\xi)<\infty\}.
\end{equation}

We say that a tempered distribution $F$ coincides with a $dw(\mathbf x)$-locally integrable function $
f$ if 
$$ \langle F,\varphi\rangle= \int_{\mathbb{R}^N} f(\mathbf x)\varphi(\mathbf x)\, dw(\mathbf x) \quad \text{ for all } \varphi\in\mathcal S(\mathbb{R}^N).$$ 
Then we simply write $F=f$. 

{
\subsection{Operator $t^2\Delta_kH_{t^2}$}\label{sec:Qt}

Let $Q(\mathbf x)=\Delta_k h_1(\mathbf x)$, $Q_t(\mathbf x)=t^{-\mathbf{N}}Q(\mathbf x/t)$,  $Q_t(\mathbf x,\mathbf y)=\tau_{\mathbf x}Q_t(-\mathbf y) $. Then, 
\begin{equation}
    Q_t(\mathbf x)=t^2\Delta_k h_{t^2}(\mathbf x), \quad  Q_t(\mathbf x,\mathbf y)=t^2\Delta_{k\, {\mathbf x}} h_{t^2}(\mathbf x,\mathbf y) =t^2\Delta_{k\, {\mathbf y}} h_{t^2}(\mathbf x,\mathbf y)
\end{equation}
are the convolution and integral kernels of the operators $t^2\Delta_kH_{t^2}$.  

Since $\frac{d^n}{dt^n}h_t(\mathbf x)=\Delta_k^nh_t(\mathbf x)$,  the function $\mathbf y\mapsto Q_t(\mathbf x,\mathbf y)$ belongs to the Schwartz class $\mathcal S(\mathbb{R}^N)$, for all $\mathbf x$ (see e.g., Theorem \ref{teo:heat_new}).
 Thus  the convolution $f*Q_t(\mathbf x)=\langle f,Q_t(\mathbf x, \cdot)\rangle$ is well-defined for any tempered distribution $f\in\mathcal S'(\mathbb{R}^N)$ and  $Q_t*f(\mathbf x)$ is a $C^\infty$-function of $\mathbf x$ and $t$ on $\mathbb{R}^N\times (0,\infty)$.
  
  Furthermore, from Theorem  \ref{teo:heat_new} we easily conclude that 

\begin{equation}\label{eq:Qt-bound} \begin{split}|Q_t(\mathbf x,\mathbf y)| & \leq  \frac{C'}{w(B(\mathbf x,t))}\exp\Big(-c\frac{d(\mathbf x,\mathbf y)^2}{t^2}\Big)\\
& \leq  \frac{C}{w(B(\mathbf x,t))}\exp\Big(-c\frac{d(\mathbf x,\mathbf y)}{t}\Big), \quad \mathbf x,\mathbf y\in\mathbb R^N, \ t>0.
\end{split}\end{equation}
 }

 \begin{lemma}\label{lem:Q_operator_bounded}
    For $t>0$ and $M_1 \in \mathbb{N}$, and  $f\in L^2(dw)$, {let  $\mathcal{Q}_t\,f= t^{2M_1}\Delta_k^{M_1}Q_t*f$.} 
    There is a constant $C>0$ such that for all $t>0$ we have
    \begin{equation*}
        \|\mathcal{Q}_t\|_{L^2(dw)\to L^2(dw)}\leq C.
    \end{equation*}
\end{lemma}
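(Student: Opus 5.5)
The plan is to compute the Dunkl transform of $\mathcal Q_t f$ and apply Plancherel's identity \eqref{eq:Plancherel}, so that the operator norm on $L^2(dw)$ becomes the supremum of a bounded multiplier. First, by \eqref{eq:conv_transform} and \eqref{eq:heat_transform}, the operator $f\mapsto Q_t*f$ equals $t^2\Delta_kH_{t^2}$. On the Fourier side this operator acts by multiplication by $-t^2\|\xi\|^2e^{-t^2\|\xi\|^2}$, using \eqref{eq:Laplacian_on_Fourier_side} together with \eqref{eq:heat_transform}. This holds for $f\in\mathcal S(\mathbb{R}^N)$ directly. For general $f\in L^2(dw)$ it holds because $H_{t^2}f\in\mathcal D(\Delta_k^n)$ for every $n$, which follows from the rapid decay of $e^{-t^2\|\xi\|^2}$ in \eqref{eq:domain}, and $\Delta_k$ is understood as the closure of $(\Delta_k,\mathcal S(\mathbb{R}^N))$.

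Applying $t^{2M_1}\Delta_k^{M_1}$ then gives
\begin{equation*}
\mathcal F(\mathcal Q_t f)(\xi)=(-1)^{M_1+1}\,(t\|\xi\|)^{2M_1+2}e^{-t^2\|\xi\|^2}\,\mathcal Ff(\xi),
\end{equation*}
and this is legitimate since $Q_t*f$ lies in $\mathcal D(\Delta_k^{M_1})$. Set $u=t\|\xi\|$. By Plancherel,
\begin{equation*}
\|\mathcal Q_tf\|_{L^2(dw)}\leq \sup_{u\ge0}u^{2M_1+2}e^{-u^2}\,\|f\|_{L^2(dw)}.
\end{equation*}
The supremum is a finite constant $C=C(M_1)$, attained at $u^2=M_1+1$, and it does not depend on $t$. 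This proves the lemma.

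The only point needing care is the justification for $f\in L^2(dw)$ rather than Schwartz $f$. There are two ways to handle it. One is to argue by density: the bound holds on $\mathcal S(\mathbb{R}^N)$, and $\mathcal Q_t$ is closed as a composition of a bounded operator with a power of the self-adjoint operator $\Delta_k$. The cleaner way, which I would adopt, is to define the spectral calculus via the unitary $\mathcal F$, so that $\Delta_k$ is unitarily equivalent to multiplication by $-\|\xi\|^2$ on $L^2(dw)$. With that identification the formula above holds for every $f\in L^2(dw)$ immediately.
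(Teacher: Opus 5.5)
Your proof is correct and is the paper's argument: use \eqref{eq:heat_transform} and \eqref{eq:Laplacian_on_Fourier_side} to get $\mathcal F(\mathcal Q_t f)(\xi)=(-1)^{M_1+1}(t\|\xi\|)^{2M_1+2}e^{-t^2\|\xi\|^2}\mathcal Ff(\xi)$, then apply Plancherel together with $\sup_{u\geq 0}u^{2M_1+2}e^{-u^2}<\infty$. Your remark that for general $f\in L^2(dw)$ one should read $\Delta_k$ through the Dunkl transform as multiplication by $-\|\xi\|^2$ supplies a justification the paper leaves implicit, and it is the right one.
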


 \begin{proof}
     By \eqref{eq:heat_transform},~\eqref{eq:Laplacian_on_Fourier_side},  $\mathcal F(\mathcal Q_t f)(\xi)=(-1)^{M_1+1}t^{2{M_1}+2} \|\xi\|^{2M_1+2} e^{-t^2\|\xi\|^2}\mathcal Ff(\xi)$. Hence,  the lemma is a consequence of Plancherel's equality~\eqref{eq:Plancherel} and the fact that 
     \begin{equation*}
       \sup_{t>0, \xi\in\mathbb R^N}   t^{2{M_1}+2} \|\xi\|^{2M_1+2} e^{-t^2\|\xi\|^2} < \infty .
     \end{equation*}
 \end{proof}

\section{Preliminaries - Calder\'on reproducing formulas and tent spaces}\label{sec:tent}

\subsection{Calder\'on reproducing formula.}\label{subsection-Cal}
Fix an {$m\in\mathbb{N}$} sufficiently large.
Let $\widetilde\Theta\in C^m(\mathbb{R})$ be an even function such that $$\sum_{j=0}^m (1+s^2)^m\Big|\frac{d^j}{ds^j}\widetilde \Theta(s)\Big|<\infty.$$ 
Set $\Theta(\mathbf{x})=\widetilde\Theta (\|\mathbf{x}\|)$ and assume that $\int_{\mathbb{R}^N}\Theta(\mathbf{x})\, {dw}(\mathbf{x})=0$. Write $L^2\Big(\mathbb{R}^{1+N}_+,\frac{dt}{t}\,dw(\mathbf{x})\Big)=L^2(dw\,dt/t)$. The Plancherel theorem for the Dunkl transform implies {that}
\begin{equation}\label{L2toL2}
\|\Theta_t*f(\mathbf{x})\|_{L^2(dw\,dt/t)}\leq C\,\|f\|_{L^2({dw})}\,.
\end{equation}
{Thus, $f\mapsto \Theta_t*f(\mathbf{x})$ is a bounded linear operator from $L^2(dw)$ into $L^2(dw\,dt/t)$. 
By duality, for $F(t,\mathbf{x})\in L^2(dw\,dt/t)$, the limit 
$$
\lim_{\varepsilon\to 0^{+}}\int_{\varepsilon }^{\varepsilon^{-1}}(\Theta_t*F(t,\cdot))(\mathbf{x})\frac{dt}{t}
$$
exists in $L^2(\mathbb{R}^N,{dw})$ and defines a bounded linear operator {$\pi_{\Theta}$} from $L^2(dw\,dt/t)$ into $L^2(\mathbb{R}^N,dw)$, that is, 
\begin{equation}\label{L2backL2}
\|{\pi_{\Theta}F}\|_{L^2({dw})}\leq C\,\|{F}\|_{L^2(dw\,dt/t)}.
\end{equation}
With the customary abuse of notation, we write 
\begin{equation}\label{eq:reproduction}
    \pi_\Theta F(\mathbf{x})=\int_0^\infty (\Theta_t * F(t,\cdot ))(\mathbf{x})\frac{dt}{t}=\int_0^\infty\int_{\mathbb{R}^N} \Theta_t(\mathbf{x},\mathbf{y})F(t,\mathbf{y})\, {dw}(\mathbf{y})\frac{dt}{t}.
\end{equation} }

Now, let us specify the context in which the operator $ \pi_\Theta$ will be used. In this paper, we will use two variants of Calder\'on reproducing formulas. Let $M$ be a positive integer. Let $\psi$ and $\phi$ be compactly supported  $C^\infty$, real-valued,  radial functions such that 
\begin{equation}\label{eq:psi_support}
    \supp \psi \subseteq B(0,1),
\end{equation}
\begin{equation}\label{eq:phi_form}
   \phi=\Delta_k^{M}\psi,
\end{equation}
\begin{equation}
    \quad |\mathcal F\psi(\xi)| \leq C \|\xi\|^2 \quad \text{ for } \|\xi\|\leq 1,
\end{equation}
\begin{equation}\label{eq:repro_1}
    \mathbf{c}_k^3\int_0^\infty  \mathcal F\phi(t\xi)^2t^2\|\xi\|^2e^{-t^2\|\xi\|^2}\frac{dt}{t}=1,\quad \xi\ne 0,
\end{equation}
and there is a constant $c_{\phi}$ such that
\begin{equation}
    \mathbf{c}_k^2\int_0^\infty  \mathcal F\phi(t\xi)^2\frac{dt}{t}=c_\phi,\quad \xi\ne 0.
\end{equation}
By the Plancherel theorem for the Dunkl transform and the normalization condition~\eqref{eq:repro_1}, applying the definition~\eqref{eq:reproduction} with $\Theta_t=\phi_t$ yields
\begin{equation}\label{eq:reproduction_1}
     f=-\int_0^\infty \phi_t*\phi_t * (t^2\Delta_k h_{t^2}*f)\,\frac{dt}{t}=-\int_0^\infty t^{4M}(\Delta_k^{M}\psi_t)*(\Delta_k^M\psi_t) * (t^2\Delta_k h_{t^2}*f)\,\frac{dt}{t}
\end{equation}
with convergence in $L^2(dw)$. To simplify notation, we write $f*f=f^{*2}$. Similarly, one can note that the mapping
\begin{align*}
    L^2(dw) \ni f \longmapsto \frac{1}{\sqrt{c_{\phi}}}\phi_t*f(\mathbf x) \in L^2(dw(\mathbf x)\,dt/t)
\end{align*}
is an isometric embedding. By composing this operator with its adjoint (which amounts to applying~\eqref{eq:reproduction} with $\Theta_t=\frac{1}{\sqrt{c_\phi}}\phi_t$), we get
\begin{equation}\label{eq:reproduction_2}
     f=\frac{1}{c_\phi}\int_0^\infty \phi_t*\phi_t * f\,\frac{dt}{t}=\frac{1}{c_\phi}\int_0^\infty \phi_t^{*2}* f\,\frac{dt}{t}=\frac{1}{c_\phi}\int_0^\infty t^{4M}(\Delta_k^{M}\psi_t)^{*2}* f\,\frac{dt}{t}
\end{equation}
with convergence in $L^2(dw)$.

We will need this lemma in the proof of the atomic and molecular decompositions of the Hardy space.

\begin{lemma}\label{lem:eta}
    Assume that $\eta \in \mathcal{S}(\mathbb{R}^N)$ and that there is a constant $C>0$ such that for all $\xi \in \mathbb{R}^N$ we have
    $|\mathcal{F}\eta(\xi)| \leq C\|\xi\|^2$. Then, there is a constant $C>0$ such that for all $g \in L^2(dw)$ we have
    \begin{equation*}
        \int_0^{\infty}\int_{\mathbb{R}^N}|\eta_t*g(\mathbf{x})|^2\,dw(\mathbf{x})\,\frac{dt}{t} \leq C\|g\|_{L^2(dw)}^2.
    \end{equation*}
\end{lemma}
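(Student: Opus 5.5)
The plan is to pass to the Dunkl transform side and use Plancherel, exactly as in the proof of Lemma~\ref{lem:Q_operator_bounded}. Since $\eta\in\mathcal S(\mathbb{R}^N)$ and $\eta_t(\mathbf x)=t^{-\mathbf N}\eta(\mathbf x/t)$, homogeneity of $dw$ (the change of variables behind \eqref{eq:t_ball}) together with $E(\lambda\mathbf x,\mathbf y)=E(\mathbf x,\lambda\mathbf y)$ gives
\begin{equation*}
\mathcal F(\eta_t)(\xi)=\mathcal F\eta(t\xi).
\end{equation*}
Hence, by \eqref{eq:conv_transform},
\begin{equation*}
\mathcal F(\eta_t*g)(\xi)=\mathbf c_k\,\mathcal F\eta(t\xi)\,\mathcal Fg(\xi)
\end{equation*}
for $g\in L^2(dw)$. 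Here $\eta_t*g\in L^2(dw)$, because $\mathcal F\eta$ is bounded; alternatively one can invoke Lemma~\ref{lem:ThangaveluXu}.

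Next, apply Plancherel \eqref{eq:Plancherel} for each fixed $t$ and then Fubini--Tonelli. This gives
\begin{equation*}
\int_0^\infty\!\!\int_{\mathbb{R}^N}|\eta_t*g(\mathbf x)|^2\,dw(\mathbf x)\,\frac{dt}{t}
=\mathbf c_k^2\int_{\mathbb{R}^N}|\mathcal Fg(\xi)|^2\Big(\int_0^\infty|\mathcal F\eta(t\xi)|^2\frac{dt}{t}\Big)dw(\xi).
\end{equation*}
It therefore suffices to show that
\begin{equation*}
\sup_{\xi\neq0}\int_0^\infty|\mathcal F\eta(t\xi)|^2\,\frac{dt}{t}<\infty.
\end{equation*}
The point $\xi=0$ has $dw$-measure zero and can be ignored.

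To bound this supremum, substitute $u=t\|\xi\|$. The inner integral becomes $\int_0^\infty|\mathcal F\eta(u\,\xi/\|\xi\|)|^2\,du/u$, so it is enough to bound $\int_0^\infty\sup_{\|\omega\|=1}|\mathcal F\eta(u\omega)|^2\,du/u$. Split this at $u=1$.
\begin{itemize}
\item For $u\le1$, the hypothesis $|\mathcal F\eta(\xi)|\le C\|\xi\|^2$ bounds the integrand by $C^2u^4$, and $\int_0^1u^3\,du<\infty$.
\item For $u>1$, we use that $\mathcal F$ preserves $\mathcal S(\mathbb{R}^N)$, so $\mathcal F\eta\in\mathcal S(\mathbb{R}^N)$ and $|\mathcal F\eta(u\omega)|\le C(1+u)^{-1}$. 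The integrand is then at most $C u^{-3}$, which is integrable on $(1,\infty)$.
\end{itemize}
This gives the claim with constant $\mathbf c_k^2$ times the bound above.

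No step here is a genuine obstacle. The only point needing care is the scaling identity $\mathcal F(\eta_t)(\xi)=\mathcal F\eta(t\xi)$, which follows from the homogeneity $E(\mathbf x/t,-i\xi)=E(\mathbf x,-it\xi)$ and $dw(t\mathbf x)=t^{\mathbf N}dw(\mathbf x)$, together with the $\mathbf c_k$ normalization in \eqref{eq:conv_transform}.
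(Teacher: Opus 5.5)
Your proposal is correct and follows the paper's proof. It applies Plancherel in $\mathbf x$ for each fixed $t$ via $\mathcal F(\eta_t*g)(\xi)=\mathbf c_k\,\mathcal F\eta(t\xi)\,\mathcal Fg(\xi)$, then Tonelli, then a uniform bound on $\int_0^\infty|\mathcal F\eta(t\xi)|^2\,dt/t$ coming from the quadratic vanishing of $\mathcal F\eta$ at the origin and its Schwartz decay; you also verify the scaling identity and carry out the split at $u=1$, which the paper leaves as ``straightforward.''
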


\begin{proof}
    By Plancherel's theorem for the Dunkl transform (see~\eqref{eq:Plancherel}) applied to the inner integral (and the formula $\mathcal{F}(f_1*f_2)=\mathbf{c}_k(\mathcal F f_1)(\mathcal{F}f_2)$ for $f_1,f_2 \in L^2(dw)$), we have
    \begin{equation*}
        \int_0^{\infty}\int_{\mathbb{R}^N}|\eta_t*g(\mathbf{x})|^2\,dw(\mathbf{x})\,\frac{dt}{t} =\mathbf{c}_k^{2}\int_0^{\infty}\int_{\mathbb{R}^N}|\mathcal{F}g(\xi)|^2|\mathcal{F}\eta(t\xi)|^2\,dw(\xi)\frac{dt}{t}.
    \end{equation*}
    By the assumption (the estimate $|\mathcal{F}\eta(\xi)| \leq C\|\xi\|^2$) together with the fact that $\mathcal{S}(\mathbb{R}^N)$ is preserved by the Dunkl transform (so $\mathcal{F}\eta \in \mathcal{S}(\mathbb{R}^N)$), it is straightforward to prove that there is a constant $c_{\eta}>0$ such that for all $\xi \in \mathbb{R}^N \setminus \{0\}$ we have $\int_0^{\infty}|\mathcal{F}\eta(t\xi)|^2\,\frac{dt}{t} \leq c_{\eta}$. Therefore, by Tonelli's and  Plancherel's theorems, we arrive at 
    \begin{equation*}
        \mathbf{c}_k^{2}\int_0^{\infty}\int_{\mathbb{R}^N}|\mathcal{F}g(\xi)|^2|\mathcal{F}\eta(t\xi)|^2\,dw(\xi)\,\frac{dt}{t} \leq \mathbf{c}_k^{2}c_\eta\int_{\mathbb{R}^N}|\mathcal{F}g(\xi)|^2\,dw(\xi)=\mathbf{c}_k^2c_\eta\|g\|_{L^2(dw)}^2.
    \end{equation*}
\end{proof}
\subsection{Tent spaces  \texorpdfstring{$T_2^p$}{T2p} on spaces of homogeneous type.}
 The tent spaces on Euclidean spaces were introduced  in \cite{CMS} and then extended to spaces of homogeneous type (see, e.g. \cite{Rus}). For more details, we refer the reader to \cite{St2}. In what follows, we restrict our attention to a specific space of homogeneous type, namely $\mathbb{R}^N$ equipped with the standard Euclidean distance and a doubling measure $dw$.

For a measurable function $F(t, \mathbf{x})$ on $(0,\infty)\times\mathbb{R}^N=:\mathbb R^{1+N}_+$, let
\[\mathcal{A}F(\mathbf{x}) :=\Big( \int_0^\infty\int_{\|\mathbf{y}-\mathbf{x}\|<t} |F(t,\mathbf{y})|^2\frac{{dw}(\mathbf{y})}{{w}(B(\mathbf{x},t))}\frac{dt}{t}\Big)^{1/ 2}.\]

\begin{definition}\normalfont
For $0< p<\infty$, the tent space $T_2^p$ is defined as 
$$T_2^p=\{F: \|F\|_{T_2^p}:=\|\mathcal{A}F\|_{L^p({dw})}<\infty\}. $$
\end{definition}
Clearly, by the doubling property,
\begin{equation*}
    \mathcal{A}F(\mathbf{x}) \sim \Big( \int_0^\infty\int_{\|\mathbf{y}-\mathbf{x}\|<t} |F(t,\mathbf{y})|^2\frac{{dw}(\mathbf{y})}{{w}(B(\mathbf{y},t))}\frac{dt}{t}\Big)^{1/2}.
\end{equation*}
Consequently,
\begin{equation}\label{T22}
\|F\|_{T_2^2}^2=\|\mathcal{A}F\|_{L^2({dw})}^2\sim \int_0^\infty \int_{\mathbb{R}^N} |F(t,\mathbf{y})|^2\frac{{dw}(\mathbf{y})dt}{t}.
\end{equation}

Now, an $L^2(dw)$-function $f$ belongs to $\mathbb{H}^p_{\mathrm{Dunkl}}$ if and only if $F(t,\mathbf x)=Q_t*f(\mathbf x)$ belongs to the tent space $T_2^p$. Moreover, $\|f\|_{H^p_{\mathrm{Dunkl}}}=\|F\|_{T_2^p}$. 

If $\Omega\subset\mathbb{R}^N$ is an open set, then the tent over $\Omega$ is given by
\begin{equation}\label{eq:tent_def}
\widehat\Omega=\Big( (0,\infty) \times \mathbb{R}^N\Big) \setminus \bigcup_{\mathbf{x} \in\Omega^c} \Gamma (\mathbf{x}), \ \ \text{ where } \Gamma (\mathbf{x})=\{ (t, \mathbf{y}): \|\mathbf{x}-\mathbf{y}\|<t\}.
\end{equation}

As in the classical case (see~\cite{CMS}), the tent space $T_2^p$ on the space of homogeneous type   admits the following  atomic decomposition (see{,} e.g.,~\cite{Rus}).

\begin{definition} \label{def:tent_atom_def}\normalfont
Let $0<p\leq 1$. A measurable function $A(t,\mathbf{x})$ is a {\it $T^p_2$-atom} if there is a Euclidean  ball $B\subset\mathbb{R}^N$ such that the following conditions hold:

$\bullet$  $ \text{supp}\, A\subseteq \widehat B$

$\bullet$ $\iint_{(0,\infty) \times \mathbb{R}^N} |A(t,\mathbf{x})|^2\, {dw}(\mathbf{x})\, \frac{dt}{t}\leq {w}(B)^{1-2/p}.$
\end{definition}

\begin{theorem}\label{teo:atomic_decomposition_tent}
    A function $F$ belongs to $T_2^p$ if and only if there are   sequences $\{A_j\}_{j \in \mathbb{N}}$ of $T_2^p$-atoms and $\{\lambda_j\}_{j \in \mathbb{N}}$ of scalars in $\mathbb{C}$ such that
$$  \sum_{j=1}^{\infty} \lambda_j A_j=F,\ \ \ \sum_{j=1}^{\infty} |\lambda_j|^p\sim \|F\|_{T_2^p}^p,$$
where the convergence is in the $T_2^p$ norm and almost everywhere.

\end{theorem}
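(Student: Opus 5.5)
The proof will follow the classical Coifman--Meyer--Stein argument, adapted to a space of homogeneous type: $\mathbb{R}^N$ with the Euclidean metric and the doubling measure $dw$. Set $p\le 1$ and $F\in T^p_2$. First reduce to the case of $F$ with bounded support in $\mathbb R^{1+N}_+$, i.e. contained in a compact set $K\subset(0,\infty)\times\mathbb R^N$ with $F\in L^2(dw\,dt/t)$; such functions are dense in $T^p_2$ by monotone/dominated convergence applied to $\mathcal AF$. Once atoms are obtained for such $F$ with $\sum|\lambda_j|^p\le C\|F\|^p_{T^p_2}$, the general case follows from a standard limiting argument: decompose $F=\sum_n (F\chi_{K_n}-F\chi_{K_{n-1}})$ with $\|F-F\chi_{K_n}\|_{T^p_2}$ small and concatenate. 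The converse direction is routine. For a single atom $A$ supported in $\widehat B$, the function $\mathcal AA$ is supported in $B$. By Hölder's inequality and \eqref{T22}, $\|\mathcal AA\|_{L^p}^p\le w(B)^{1-p/2}\|\mathcal AA\|_{L^2}^p\le C$. The $p$-triangle inequality then gives $\|\sum\lambda_jA_j\|^p_{T^p_2}\le C\sum|\lambda_j|^p$, together with convergence in $T^p_2$.

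For the main direction, define the open sets $O_k=\{\mathbf x:\mathcal AF(\mathbf x)>2^k\}$, $k\in\mathbb Z$. Fix a small $\gamma\in(0,1)$ and let $O_k^*=\{\mathbf x: \mathcal M\chi_{O_k}(\mathbf x)>1-\gamma\}$, where $\mathcal M$ is the Hardy--Littlewood maximal function with respect to $dw$. Then $O_k\subset O_k^*$, and $w(O_k^*)\le Cw(O_k)$ by the weak $(1,1)$ bound for $\mathcal M$ on the doubling space. The key geometric fact, proved exactly as in \cite{CMS}, is the following. For any closed set $E$ with complement $O$, and $E^*$ the set of points of $\gamma$-density of $E$ (so $(E^*)^c=O^*$),
$$\int_{E^*}\mathcal A F(\mathbf x)^2\,dw(\mathbf x)\ \ge\ c_\gamma\iint_{\mathcal R(E^*)}|F(t,\mathbf y)|^2\,dw(\mathbf y)\frac{dt}{t},$$
where $\mathcal R(E^*)=\bigcup_{\mathbf x\in E^*}\Gamma(\mathbf x)$. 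To prove it, use Fubini. For $(t,\mathbf y)\in\mathcal R(E^*)$, take $\mathbf x\in E^*$ with $\|\mathbf x-\mathbf y\|<t$. Then $B(\mathbf y,t)\subset B(\mathbf x,2t)$, and density gives $w(E\cap B(\mathbf y,t))\ge c\,w(B(\mathbf y,t))$ via doubling. Here one must replace $E^*$ with $E$ in the Fubini step; this requires choosing $\gamma$ small relative to the doubling constant, and is the only place the homogeneous-type adaptation needs care. Since $F$ has bounded support, $\mathcal AF$ is bounded and $O_k=\emptyset$ for large $k$. Every point of the support lies in $\bigcup_k (\widehat{O_k^*}\setminus\widehat{O_{k+1}^*})$, up to a set where $\mathcal AF=0$ on the relevant cones, on which $F$ vanishes by the density estimate.

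Next, Whitney-decompose each $O_k^*$ (in the doubling metric space) into balls $B_{k,j}$ with bounded overlap and radius comparable to the distance to $(O_k^*)^c$. Let $\Delta_{k,j}$ be a disjoint partition of $O_k^*$ subordinate to these balls, and set $T_{k,j}=(\widehat{O_k^*}\setminus\widehat{O_{k+1}^*})\cap\{(t,\mathbf y):\mathbf y\in\Delta_{k,j}\}$. Then $T_{k,j}\subset\widehat{CB_{k,j}}$ for a fixed $C$, since Whitney balls have radius comparable to the distance to the complement and $t\le \operatorname{dist}(\mathbf y,(O_k^*)^c)$ on $\widehat{O^*_k}$. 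Put
$$\lambda_{k,j}=\Big(\iint_{T_{k,j}}|F|^2\,dw\,\frac{dt}{t}\Big)^{1/2}w(CB_{k,j})^{1/p-1/2},\qquad A_{k,j}=\lambda_{k,j}^{-1}F\chi_{T_{k,j}}.$$
Each $A_{k,j}$ is then a $T^p_2$-atom for the ball $CB_{k,j}$. Moreover, $T_{k,j}\subset \mathcal R((O^*_{k+1})^c)$, so the density estimate with $E=O_{k+1}^c$ gives
$$\sum_j\iint_{T_{k,j}}|F|^2\le C\int_{(O^*_{k+1})^c}\mathcal AF^2\le C\int_{O_k^*\setminus O_{k+1}}\mathcal AF^2\,dw\le C2^{2k}w(O_k^*).$$
Here the integral is restricted to $O_k^*$ because $\widehat{O_k^*}$ is only seen from points of $O_k^*$.

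Finally, since $p/2\le 1$, Hölder's inequality over $j$ with bounded overlap ($\sum_j w(CB_{k,j})\le Cw(O_k^*)$) gives
$$\sum_j|\lambda_{k,j}|^p\le\Big(\sum_j\iint_{T_{k,j}}|F|^2\Big)^{p/2}\Big(\sum_jw(CB_{k,j})\Big)^{1-p/2}\le C2^{kp}w(O_k^*)\le C2^{kp}w(O_k).$$
Summing over $k$ gives $\sum|\lambda_{k,j}|^p\le C\sum_k2^{kp}w(O_k)\le C\|\mathcal AF\|_p^p$. The equality $F=\sum\lambda_{k,j}A_{k,j}$ holds pointwise, with each point lying in at most one $T_{k,j}$, so convergence is almost everywhere. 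Convergence in $T^p_2$ follows from the atom bound applied to tails, which are dominated by $\sum_{\text{tail}}|\lambda|^p\to0$. I expect the main obstacle to be the density lemma in the doubling setting, namely choosing $\gamma$ so that cones from points of $E^*$ see a fixed proportion of $E$; everything else is bookkeeping.
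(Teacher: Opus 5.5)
Your overall strategy matches what the paper relies on. The paper gives no proof of this theorem; it cites the Coifman--Meyer--Stein argument as adapted by Russ to spaces of homogeneous type (\cite{CMS}, \cite{Rus}). Your pieces $\lambda_{k,j}A_{k,j}=F\chi_{T_{k,j}}$ with disjoint $T_{k,j}$ are exactly the form recorded in Remark~\ref{remarkAtom}. The reduction to bounded support, the Whitney/tent bookkeeping, the H\"older step and the converse direction are fine. However, the density lemma, which you correctly identify as the only real step, is wrong as written in two ways.

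First, the left-hand side must be an integral over $E$, not over $E^*$. Fubini gives
\[
\int_E \mathcal AF(\mathbf x)^2\,dw(\mathbf x)\ \ge\ c\iint |F(t,\mathbf y)|^2\,\frac{w(E\cap B(\mathbf y,t))}{w(B(\mathbf y,t))}\,dw(\mathbf y)\,\frac{dt}{t},
\]
so what you must show is $w(E\cap B(\mathbf y,t))\ge c\,w(B(\mathbf y,t))$ for $(t,\mathbf y)\in\mathcal R(E^*)$. The version you display, with $\int_{E^*}$, is false in general, because $E^*$ can fill an arbitrarily small fraction of a ball $B(\mathbf y,t)$ that meets it. In the application, take $E=O_{k+1}^c$ and apply the lemma to $F\chi_{\widehat{O_k^*}}$. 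This gives directly
\[
\sum_j\iint_{T_{k,j}}|F|^2\le C\int_{O_k^*\setminus O_{k+1}}\mathcal AF^2\,dw\le C2^{2k}w(O_k^*),
\]
so the intermediate $\int_{(O^*_{k+1})^c}$ should be dropped. The same corrected lemma with $E=O_k^c$ and $k\to-\infty$ (using $\mathcal AF\in L^2$ for bounded-support $F$) is what shows $F=0$ a.e.\ off $\bigcup_k\widehat{O_k^*}$.

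Second, your choice of $\gamma$ goes the wrong way. Take $O^*=\{\mathcal M\chi_O>1-\gamma\}$ with $\mathcal M$ centered, $\mathbf x\in E^*$, and $B(\mathbf y,t)\subset B(\mathbf x,2t)\subset B(\mathbf y,3t)$. This gives only $w(O\cap B(\mathbf y,t))\le(1-\gamma)C_d\,w(B(\mathbf y,t))$, which is useful only when $(1-\gamma)C_d<1$, i.e.\ for $\gamma$ close to $1$, as in \cite{CMS}.

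For small $\gamma$ the lemma fails already when $N=1$ and $k\equiv 0$. Take $O=(0,1)$. The point $0$ has centered density $1/2$, so $0\in E^*$ once $\gamma\le 1/2$. Yet the ball $B(1/2-\epsilon,1/2)=(-\epsilon,1-\epsilon)$ contains $0$ and meets $E$ only in $(-\epsilon,0]$, a set of relative measure $\epsilon$. Taking $F$ supported in a tiny neighbourhood of $(t,\mathbf y)=(1/2,1/2-\epsilon)$ then forces $c\lesssim\epsilon$, so no constant $c_\gamma$ works.

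There are two ways to repair this:
\begin{itemize}
\item Take $1-\gamma<C_d^{-1}$.
\item Use the uncentered maximal function. Then $w(E\cap B(\mathbf y,t))\ge\gamma\,w(B(\mathbf y,t))$ holds for every $\gamma\in(0,1)$ with no doubling argument at all.
\end{itemize}
In both cases $w(O^*)\le C_\gamma w(O)$ still follows from the weak type $(1,1)$ bound. With these two corrections your argument goes through.
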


\begin{remark}\label{remarkAtom}
\normalfont
According to the proof of the atomic decomposition of $T_2^p$ presented in \cite{Rus}, the function $\lambda_jA_j$ can be chosen to be of the form $\lambda_j A_j(t,\mathbf{x}) =\chi_{S_j}(t,\mathbf{x})F(t,\mathbf{x}) $, where the sets $S_j$ are pairwise disjoint, $\mathbb{R}_+^{N+1}=\bigcup S_j$, and each $S_j$ is contained in a tent $\widehat B_j$. So, if $F\in T^p_2\cap T^2_2$, then $F$ can be decomposed into atoms such that
$ F(t,\mathbf{x})=\sum_{j} \lambda_j A_j(t,\mathbf{x})$ and the convergence is in the spaces  $T_2^p$, $T_2^2$, and pointwise.
\end{remark}
Throughout this paper, $C$ denotes a generic positive constant whose value may change from line to line.

\section{Hardy spaces}\label{sec:Hardy}

The Hardy space $H^p_{\mathrm{Dunkl}}$ is defined as the completion of the space $\mathbb  H^p_{\mathrm{Dunkl}}\subset L^2(dw)$ with respect to the quasinorm $\|\cdot\|_{\mathbb{H}^p_{\mathrm{Dunkl}}}$. {It will turn out that if $(f_n)_{n \in \mathbb{N}}$ is a Cauchy sequence in $\mathbb{H}^p_{\mathrm{Dunkl}}$, then it converges in $\mathcal S'(\mathbb{R}^N)$ to a unique tempered distribution $f$ (see Corollary \ref{coro:distribution}). Moreover, the distribution $f$ admits a special  atomic decomposition (see Section \ref{section-atomic} for the definition of the atomic Hardy spaces and Section \ref{sec:atomic_proof} for the proof).} 

\subsection{Atoms and molecules}

We begin by introducing the notion of a $(p,2,M,\Delta_k)$-atom.

\begin{definition} (cf. \cite{Hofman}, \cite{Duong})
    Let $0<p\leq 1$, $M>\mathbf{N}(2-p)/4p$, $M\in\mathbb Z$. A function $\boldsymbol{a}(\cdot)$ is called a \textbf{$(p,2,M,\Delta_k)$-atom} if there exist $\boldsymbol{b}\in\mathcal  D(\Delta_k^M)$ and {a ball $B=B(\mathbf x_0,r_B)$}  such that 

    \begin{equation}\label{eq:cancellation}
        \boldsymbol{a}=\Delta_k^M \boldsymbol{b},
    \end{equation}
    \begin{equation}\label{eq:support}
        \supp \boldsymbol{b}\subseteq \mathcal O(B), 
    \end{equation}
    and
\begin{equation}\label{eq:size}
    \| (r_B^2 \Delta_k)^m \boldsymbol{b}\|_{L^2(dw)}\leq r_B^{2M} w(B)^{\frac{1}{2}-\frac{1}{p}}, \quad m=0,1,\dots, M.
\end{equation}
\end{definition}

The following proposition establishes that every atom is uniformly bounded in $\mathbb{H}^p_{\mathrm{Dunkl}}$.

\begin{proposition}\label{atom_in_Hp}
Suppose $0<p\leq 1$ and  $M>\mathbf{N}(2-p)/4p$.   There is a constant $C>0$ such that if   $\boldsymbol{a}$ is a  $(p,2,M,\Delta_k)$-atom, then $\boldsymbol{a}\in\mathbb{H}^p_{\mathrm{Dunkl}}$ and 
    \[ \|\boldsymbol{a}\|_{\mathbb{H}^p_{\mathrm{Dunkl}}}\leq C. \]
\end{proposition}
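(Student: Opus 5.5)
We follow the standard scheme of Hofmann--Mayboroda and Duong--Li, adapted to the orbit geometry. Let $\boldsymbol a=\Delta_k^M\boldsymbol b$ be an atom associated with $B=B(\mathbf x_0,r_B)$. Since $\boldsymbol a\in L^2(dw)$, it suffices to show $\|S\boldsymbol a\|_{L^p(dw)}\le C$. We split $\mathbb R^N$ into the region near the orbit, $U_0=\mathcal O(4B)$, and the dyadic orbital annuli
\[
U_j=\{\mathbf x: 2^{j+1}r_B\le d(\mathbf x,\mathbf x_0)<2^{j+2}r_B\},\qquad j\ge 1.
\]
Note that $w(\mathcal O(2^jB))\le |G|\,w(2^jB)$ by $G$-invariance of $dw$.

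On $U_0$ we use H\"older's inequality and the $L^2$-boundedness of $S$. The latter comes from \eqref{T22} together with Lemma~\ref{lem:eta} applied to $\eta=Q$, since $\mathcal FQ(\xi)=-c\|\xi\|^2e^{-\|\xi\|^2}$. This gives
\[
\|S\boldsymbol a\|_{L^p(U_0)}\le w(U_0)^{1/p-1/2}\|\boldsymbol a\|_{L^2}\le C\,w(B)^{1/p-1/2}\,w(B)^{1/2-1/p}=C,
\]
using \eqref{eq:size} with $m=M$.

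On $U_j$ we again apply H\"older, so it suffices to prove
\[
\|S\boldsymbol a\|_{L^2(U_j)}\le C\,2^{-j\varepsilon}\,2^{-j\mathbf N(1/p-1/2)}\,w(B)^{1/2-1/p}
\]
for some $\varepsilon>0$, because $w(2^jB)^{1/p-1/2}\le C2^{j\mathbf N(1/p-1/2)}w(B)^{1/p-1/2}$ by \eqref{eq:growth}. For $\mathbf x\in U_j$ we split the $t$-integral in $S\boldsymbol a(\mathbf x)$ at $t=2^{j}r_B/4$.

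\emph{Small $t$.} Here $\|\mathbf y-\mathbf x\|<t$, so $d(\mathbf y,\mathbf x_0)\gtrsim 2^jr_B$, i.e.\ $\mathbf y$ lies at orbital distance $\gtrsim 2^jr_B$ from $\operatorname{supp}\boldsymbol a\subseteq\mathcal O(B)$. By \eqref{eq:Qt-bound}, $|Q_t(\mathbf y,\mathbf z)|\le C\,w(B(\mathbf y,t))^{-1}e^{-c(2^jr_B)^2/t^2}$ for $\mathbf z\in\mathcal O(B)$. Combining this with Cauchy--Schwarz, $\|\boldsymbol a\|_{L^1}\le w(B)^{1/2}\|\boldsymbol a\|_{L^2}$, and the ball-ratio bounds \eqref{eq:growth}, the Gaussian factor beats any power of $2^{j}$. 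Alternatively, one can use the off-diagonal $L^2$ (Davies--Gaffney type) estimate for $t^2\Delta_kH_{t^2}$ between sets separated in $d$, which follows from the same kernel bound via Schur's test.

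\emph{Large $t$.} Here we exploit cancellation: $t^2\Delta_kH_{t^2}\boldsymbol a=t^{-2M}(t^2\Delta_k)^{M+1}H_{t^2}\boldsymbol b$. Plancherel as in Lemma~\ref{lem:Q_operator_bounded} gives
\[
\|t^2\Delta_kH_{t^2}\boldsymbol a\|_{L^2}\le Ct^{-2M}\|\boldsymbol b\|_{L^2}\le Ct^{-2M}r_B^{2M}w(B)^{1/2-1/p}.
\]
Integrating $\int_{2^jr_B/4}^\infty t^{-4M}\,dt/t$ (after enlarging the cone integral over $U_j$ to the full $L^2(dw\,dt/t)$ norm, up to the averaging factor, which is handled by doubling) yields a factor $2^{-2jM}$. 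The condition $M>\mathbf N(2-p)/(4p)$ is exactly $2M>\mathbf N(1/p-1/2)$, which gives the required decay with $\varepsilon=2M-\mathbf N(1/p-1/2)>0$. Summing over $j$ in the $p$-th power then finishes the proof.

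The main obstacle is the geometry: kernels decay only in the orbit distance $d$, while the cone in $S$ uses the Euclidean distance. One has to check that $\|\mathbf y-\mathbf x\|<t$ implies $d(\mathbf y,\mathbf x_0)\ge d(\mathbf x,\mathbf x_0)-t$, which holds since $d$ is a pseudo-metric dominated by $\|\cdot\|$. One also has to check that orbital annuli have measure comparable to $w(2^jB)$. We expect the small-$t$ off-diagonal estimate to be the most delicate step, and we would first try the pointwise bound via \eqref{eq:Qt-bound}.
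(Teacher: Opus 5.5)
\textbf{There is a genuine gap in the small-$t$ step.} Your overall architecture is the paper's: a local part via H\"older and $L^2$-boundedness of $S$, orbital annuli, and a split of the $t$-integral using $\boldsymbol a=\Delta_k^M\boldsymbol b$ for large $t$. The paper proves the statement through the molecular version, Proposition~\ref{prop:m_in_Hp}. Your local part and your large-$t$ part are correct.

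The problem is where you cut and what you use below the cut. You split at $t=2^jr_B/4$, and for $t<2^jr_B/4$ you use only the size of $\boldsymbol a$, through $\|\boldsymbol a\|_{L^1(dw)}\le Cw(B)^{1-1/p}$.

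\begin{itemize}
\item The factor $e^{-c(2^jr_B/t)^2}$ decays only in the ratio $2^jr_B/t$, which ranges over $(4,\infty)$. For $t\in(2^jr_B/8,2^jr_B/4)$ it is bounded below by a positive constant, so it does not beat any power of $2^j$ there.
\item On that range your argument gives at best $|Q_t*\boldsymbol a(\mathbf y)|\le C\,w(2^jB)^{-1}w(B)^{1-1/p}$.
\item Hence $\|S\boldsymbol a\|_{L^2(U_j)}\le C\,w(2^jB)^{-1/2}w(B)^{1-1/p}$.
\item After H\"older, $\|S\boldsymbol a\|^p_{L^p(U_j)}\le C\,(w(2^jB)/w(B))^{1-p}$. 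This is $\ge C$ for every $j$ (constant for $p=1$, growing for $p<1$), so the sum over $j$ diverges.
\end{itemize}

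Your Davies--Gaffney alternative fails for the same reason: $\int_0^{2^jr_B/4}e^{-c(2^jr_B/t)^2}\,dt/t$ is a constant independent of $j$. At scales $t\sim 2^jr_B$, size information alone cannot give decay; cancellation must enter there.

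\textbf{Two repairs.}

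\emph{(i) Move the cut, as the paper does.} Split at $t=2^{\theta j}r_B$ with $\theta\in(0,1)$; the paper uses $2^{\theta(j-5)}r$.
\begin{itemize}
\item For small $t$ you now have $2^jr_B/t\ge 2^{(1-\theta)j}$. Then $e^{-c2^jr_B/t}\le C_K(t/2^jr_B)^K$, integrated over $t<2^{\theta j}r_B$, gives a factor like $2^{-(1-\theta)(2K-2\mathbf N)j}$. This beats every loss once $K$ is large.
\item Your large-$t$ computation now gives $2^{-2M\theta j}$ instead of $2^{-2Mj}$. This still suffices, because $2M>\mathbf N(1/p-1/2)$ is strict, so $\theta$ can be taken close to $1$.
\end{itemize}

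\emph{(ii) Keep your cut but use cancellation for small $t$ too.}
\begin{itemize}
\item Write $Q_t*\boldsymbol a=t^{-2M}\big(t^{2M}\Delta_k^MQ_t\big)*\boldsymbol b$. The kernel $t^{2M+2}\partial_s^{M+1}h_s\big|_{s=t^2}$ obeys the same Gaussian bound by Theorem~\ref{teo:heat_new}.
\item Combine this with $\|\boldsymbol b\|_{L^1(dw)}\le Cr_B^{2M}w(B)^{1-1/p}$. This produces the extra factor $(r_B/t)^{2M}=2^{-2Mj}(2^jr_B/t)^{2M}$.
\item The powers of $2^jr_B/t$ are absorbed by the Gaussian. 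You get $\|S\boldsymbol a\|_{L^2(U_j)}\le C2^{-2Mj}w(B)^{1/2-1/p}$ on that range as well, matching your large-$t$ bound.
\end{itemize}
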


Instead of proving Proposition~\ref{atom_in_Hp} directly, we establish a more general result for molecules. 

The concept of molecular characterization for Hardy spaces was originally developed by Taibleson and Weiss~\cite{Taibleson-Weiss} to provide a more flexible alternative to atomic decomposition. Inspired by their seminal work, and adapting these classical ideas to our specific setting associated with the operator $\Delta_k$ and the measure $dw$, in the spirit of~\cite{Hofman} and~\cite{Duong}, the notion of a molecule is formulated as follows.

\begin{definition}\label{def:molecule}
     Fix $\varepsilon>0$ and $0<p\leq 1$,   $M>\mathbf N(2-p)/4p$, $M\in\mathbb Z$. We say that an $L^2(dw)$-function $\boldsymbol{m}$ is a $(p,2,M,\Delta_k,\varepsilon)$\textbf{-molecule} if the following conditions hold:
     \begin{enumerate}[(a)]
         \item{ there exist a function $\boldsymbol{b}\in \mathcal{D}(\Delta_k^M)$ and a ball $B=B(\mathbf{x}_0,r)$ such that $\boldsymbol{m}=\Delta_k^M \boldsymbol{b}$,}\label{numitem:1}
         \item{for all $j=0,1,2,\ldots$ and $n=0,1,\ldots,M$,  we have
         \begin{equation}\label{eq:molecule_condition}
    \|(r^2\Delta_k)^n \boldsymbol{b}\|_{L^2(U_j(B),dw)}\leq r^{2M}2^{-j\varepsilon} w(2^jB)^{\frac{1}{2}-\frac{1}{p}},
\end{equation}
where $U_0(B)=\mathcal O(B)$, and $U_j(B)=\mathcal O(2^{j}B\setminus 2^{j-1}B)$ for $j=1,2,\dots$}.\label{numitem:2}
     \end{enumerate}
\end{definition}

\begin{lemma}\label{lem:L2molecule}
     There is a constant $C>0$ such that for any $j_0 \in \mathbb{Z}$, $j_0 \geq 0$ and for any $(p,2,M,\Delta_k,\varepsilon)$-molecule $\boldsymbol{m}$ associated with a ball $B$,  we have
    \begin{equation}\label{eq:m_on_U}
        \| \boldsymbol{m}\|^2_{L^2\big(\bigcup_{j\geq j_0} U_j(B)\big)} \leq C2^{-2\varepsilon j_0} w(2^{j_0}B)^{1-2/p}.
    \end{equation}
    In particular, using~\eqref{eq:m_on_U} for $j_0=0$, we get
    \begin{equation}\label{eq:L2Molecule}
        \| \boldsymbol{m}\|^2_{L^2(dw)} \leq Cw(B)^{1-2/p}.
    \end{equation}
\end{lemma}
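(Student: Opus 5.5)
The plan is to sum the molecule bound over the annular pieces $U_j(B)$. Write $B=B(\mathbf x_0,r)$. First observe that the sets $U_j(B)$, $j\geq 0$, cover $\mathbb{R}^N$: we have $U_0(B)=\mathcal O(B)$, and $\bigcup_{j=1}^{J}U_j(B)=\mathcal O(2^JB\setminus B)$. Hence
\begin{equation*}
\bigcup_{j\geq j_0}U_j(B)\subseteq \mathcal O\big(\mathbb{R}^N\setminus 2^{j_0-1}B\big)\ \text{ for } j_0\geq 1,
\end{equation*}
and the union is everything for $j_0=0$. The sets may overlap, because the orbit of an annulus can meet other annuli. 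We only need an upper bound, so it suffices to use
\begin{equation*}
\|\boldsymbol m\|^2_{L^2(\bigcup_{j\geq j_0}U_j(B))}\leq \sum_{j\geq j_0}\|\boldsymbol m\|^2_{L^2(U_j(B))}.
\end{equation*}

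Next, bound each term using condition \eqref{eq:molecule_condition} with $n=M$. Since $\boldsymbol m=\Delta_k^M\boldsymbol b$, we have $\|(r^2\Delta_k)^M\boldsymbol b\|_{L^2(U_j(B))}=r^{2M}\|\boldsymbol m\|_{L^2(U_j(B))}$. Dividing by $r^{2M}$ gives
\begin{equation*}
\|\boldsymbol m\|_{L^2(U_j(B))}\leq 2^{-j\varepsilon}w(2^jB)^{\frac12-\frac1p}.
\end{equation*}
Therefore
\begin{equation*}
\|\boldsymbol m\|^2_{L^2(\bigcup_{j\geq j_0}U_j(B))}\leq \sum_{j\geq j_0}2^{-2j\varepsilon}w(2^jB)^{1-\frac2p}.
\end{equation*}

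Finally, sum the series. Since $0<p\leq1$, the exponent $1-2/p$ is negative. For $j\geq j_0$ we have $w(2^jB)\geq w(2^{j_0}B)$, because the balls are nested (the lower bound in \eqref{eq:growth} would even give extra decay $2^{-(j-j_0)N(2/p-1)}$, but monotonicity is enough). Thus $w(2^jB)^{1-2/p}\leq w(2^{j_0}B)^{1-2/p}$, and the remaining sum is geometric:
\begin{equation*}
\sum_{j\geq j_0}2^{-2j\varepsilon}=\frac{2^{-2j_0\varepsilon}}{1-2^{-2\varepsilon}}.
\end{equation*}
This yields \eqref{eq:m_on_U} with $C=(1-2^{-2\varepsilon})^{-1}$, which depends only on $\varepsilon$. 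Taking $j_0=0$, the union is all of $\mathbb{R}^N$, and we obtain \eqref{eq:L2Molecule}.

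There is no real obstacle here. The only points needing care are that the $U_j(B)$ cover $\mathbb{R}^N$, and that their possible overlap only helps the subadditive estimate.
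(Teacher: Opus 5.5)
Your proof is correct and follows the paper's argument: you apply \eqref{eq:molecule_condition} with $n=M$ on each $U_j(B)$ and sum the resulting geometric series over $j\geq j_0$, using that $1-2/p<0$. You deviate in two small ways, both harmless. First, you use subadditivity over the possibly overlapping sets $U_j(B)$, where the paper writes an equality; your inequality is the more accurate statement. Second, you use the monotonicity $w(2^jB)\geq w(2^{j_0}B)$, whereas the paper invokes \eqref{eq:growth} to get extra decay that the argument does not need.
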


\begin{proof}
    Applying condition~\eqref{eq:molecule_condition} and then~\eqref{eq:growth}, together with the fact that $1-2/p<0$, we get 
\begin{equation*}
\begin{split}
    \| m\|^2_{L^2(\bigcup_{j\geq j_0} U_j(B))}& =\sum_{j\geq j_0} \|\boldsymbol m\|_{L^2(U_j(B))}^2 \leq \sum_{j\geq j_0} 2^{-2j\varepsilon} w(2^jB)^{1-2/p}  \\&\leq C w(2^{j_0}B)^{1-2/p}\sum_{j \geq j_0}2^{-2\varepsilon j}2^{(j-j_0){N}(1-2/p)}\leq C 2^{-2\varepsilon j_0}w(2^{j_0}B)^{1-2/p}.
    \end{split} 
\end{equation*}
\end{proof}
Observe that any $(p,2,M,\Delta_k)$-atom is trivially a $(p,2,M,\Delta_k,\varepsilon)$-molecule for any $\varepsilon>0$. 

\begin{proposition}\label{prop:m_in_Hp}
    There is a constant $C=C(p,M,\varepsilon)$ such that 
    \[ \| \boldsymbol{m}\|_{\mathbb{H}^p_{\mathrm{Dunkl}}}\leq C\]
    for every $(p,2,M,\Delta_k,\varepsilon)$-molecule $\boldsymbol{m}$. 
\end{proposition}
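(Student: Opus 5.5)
We follow the standard Hofmann--Mayboroda / Duong--Li scheme, adapted to the two distances (Euclidean and orbit) present here. Let $\boldsymbol m=\Delta_k^M\boldsymbol b$ be a molecule associated with $B=B(\mathbf x_0,r)$. We must show $\|S\boldsymbol m\|_{L^p(dw)}\le C$. Split $\mathbb R^N=\bigcup_{j\ge0}U_j(B)$. On each piece we estimate $\int_{U_j(B)}(S\boldsymbol m)^p\,dw$ by Hölder's inequality, using $w(\mathcal O(2^jB))\le |G|\,w(2^jB)$:
$$\int_{U_j(B)}(S\boldsymbol m)^p\,dw\le C\,w(2^jB)^{1-p/2}\,\|S\boldsymbol m\|_{L^2(U_j(B))}^p .$$
It therefore suffices to show $\|S\boldsymbol m\|_{L^2(U_j(B))}\le C2^{-j\delta}w(2^jB)^{1/2-1/p}$ for some $\delta>0$. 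Summing the resulting bounds $2^{-jp\delta}$ over $j$ then finishes the proof.

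For $j\le 2$ (or any fixed finite number of $j$), we use the $L^2$-boundedness of $S$, which follows from \eqref{T22} and Lemma~\ref{lem:eta} (note $\mathcal F Q(\xi)=-\mathbf c_k^{-1}\|\xi\|^2e^{-\|\xi\|^2}$). Combined with Lemma~\ref{lem:L2molecule}, this gives $\|S\boldsymbol m\|_{L^2}\le Cw(B)^{1/2-1/p}$.

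For $j\ge3$, decompose $\boldsymbol m=\sum_i \boldsymbol m\chi_{U_i(B)}$ and split $S\boldsymbol m$ on $U_j(B)$ into three parts: the near part ($i\ge j-2$), the small-$t$ part ($t<2^{j-3}r$, $i\le j-3$), and the large-$t$ part ($t\ge 2^{j-3}r$).

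\emph{Near part.} This is handled by $L^2$-boundedness of $S$ and \eqref{eq:m_on_U} with $j_0=j-2$, giving $2^{-j\varepsilon}w(2^jB)^{1/2-1/p}$.

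\emph{Small-$t$ part.} Here $\mathbf x\in U_j(B)$, $\|\mathbf y-\mathbf x\|<t<2^{j-3}r$, and $\mathbf z\in U_i(B)$ with $i\le j-3$. Since all the sets $U_i$ are $G$-invariant, the orbit distance satisfies $d(\mathbf y,\mathbf z)\gtrsim 2^jr$. The kernel bound \eqref{eq:Qt-bound} then gives exponential decay $e^{-c2^jr/t}$. Together with $\|\boldsymbol m\|_{L^1}$-type Schur estimates, or simply the $L^2$ off-diagonal (Gaffney) bound
$$\|\chi_E Q_t\chi_F\|_{2\to2}\le Ce^{-c\,d(E,F)^2/t^2}\quad\text{for $G$-invariant } E,F,$$
this produces an arbitrarily fast decay factor $(r/2^jr)^{K}$. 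This easily beats the possible growth $w(2^jB)/w(B)\le C2^{j\mathbf N}$. The Gaffney bound follows from Schur's test using \eqref{eq:Qt-bound} and doubling.

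\emph{Large-$t$ part.} Here we use the cancellation $\boldsymbol m=\Delta_k^M\boldsymbol b$, writing
$$Q_t*\boldsymbol m=t^{-2M}\,(t^{2M}\Delta_k^{M}Q_t)*\boldsymbol b .$$
By Lemma~\ref{lem:Q_operator_bounded}, the operator $t^{2M}\Delta_k^MQ_t$ is uniformly bounded on $L^2$, and its square function over $t\ge 2^{j-3}r$ satisfies
$$\int_{2^{j-3}r}^\infty t^{-4M}\|\cdot\|^2\frac{dt}{t}\le C(2^jr)^{-4M}\|\boldsymbol b\|_{L^2}^2 .$$
Moreover $\|\boldsymbol b\|_{L^2}\le \sum_i r^{2M}2^{-i\varepsilon}w(2^iB)^{1/2-1/p}\le Cr^{2M}w(B)^{1/2-1/p}$. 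This gives
$$\|S\boldsymbol m\|_{L^2(U_j)}\lesssim 2^{-2Mj}\,w(B)^{1/2-1/p}\le C\,2^{-2Mj}\,2^{j\mathbf N(1/p-1/2)}\,w(2^jB)^{1/2-1/p},$$
using \eqref{eq:growth} with $1/p-1/2>0$. Since $M>\mathbf N(2-p)/4p$, the exponent $-2M+\mathbf N(1/p-1/2)$ is negative, yielding the required $\delta$.

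The main obstacle is the small-$t$ part. The kernel decays only in the orbit distance $d$, not the Euclidean distance, so the localization must be phrased with the $G$-invariant annuli $U_j(B)$. One must also control the cones $\|\mathbf x-\mathbf y\|<t$ emanating from $U_j$, whose points $\mathbf y$ may approach $\mathcal O(2^{j-1}B)$ Euclidean-wise from other Weyl chambers. We handle this by noting that $\mathcal O$ of a ball enlarged by $t$ is still $d$-separated from $\mathcal O(2^{j-3}B)$ by $\gtrsim 2^jr$. The volume normalization $w(B(\mathbf x,t))^{-1}$ for $t\ll 2^jr$ is absorbed by the exponential factor via \eqref{eq:growth}.
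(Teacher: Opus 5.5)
\textbf{There is a genuine gap in the small-$t$ part.} Your overall architecture matches the paper's proof: H\"older on $G$-invariant annuli, $L^2$-boundedness of $S$ for the part of $\boldsymbol m$ near $U_j(B)$, and the cancellation $\boldsymbol m=\Delta_k^M\boldsymbol b$ with Lemma~\ref{lem:Q_operator_bounded} for large $t$. The problem is the cut-off $t=2^{j-3}r$ between the two regimes.

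For $\mathbf x\in U_j(B)$, $\|\mathbf x-\mathbf y\|<t$ and $\mathbf z\in\mathcal O(2^{j-3}B)$, you only know $d(\mathbf y,\mathbf z)\geq 2^{j-2}r$. So \eqref{eq:Qt-bound}, or a Gaffney bound, gives the factor $e^{-c2^jr/t}\leq C_K(t/2^jr)^K$, not $(r/2^jr)^K$. For $t\in(2^{j-5}r,2^{j-3}r)$ this factor is bounded below by a constant independent of $j$.

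Carry this through with $\|\boldsymbol m\chi_{\mathcal O(2^{j-3}B)}\|_{L^2(dw)}\leq Cw(B)^{1/2-1/p}$ and $w(B(\mathbf x,t))^{-1}\leq C(2^jr/t)^{\mathbf N}w(2^jB)^{-1}$. The $t$-integral becomes $\int_0^{1/8}s^{-2\mathbf N}e^{-2c/s}\,ds/s$, which is just a constant. You end up with a bound $Cw(B)^{1/2-1/p}$ for this piece in $L^2(U_j(B))$. That exceeds the required $2^{-\delta j}w(2^jB)^{1/2-1/p}$ by a factor as large as $2^{j\mathbf N(1/p-1/2)}$, for instance when $\mathbf x_0=0$. (Even for $p=1$ there is no summable decay.)

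This is not a bookkeeping artefact. The truncated function $\boldsymbol m\chi_{\mathcal O(2^{j-3}B)}$ has lost the cancellation; for an atom it is the whole atom. At scales $t$ comparable to $2^jr$, the kernel $Q_t(\mathbf y,\cdot)$ is not small on $\mathcal O(2^{j-3}B)$, since it is only bounded by $Cw(B(\mathbf y,t))^{-1}$. So no size estimate can produce decay in $j$ there; at those scales the decay must come from $\boldsymbol m=\Delta_k^M\boldsymbol b$, which your small-$t$ estimate does not use.

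\textbf{The fix is the paper's choice of cut-off.} Split at $t=2^{\theta(j-5)}r$, with $\theta\in(0,1)$ chosen so that $2M\theta>\mathbf N(1/p-1/2)$. This is possible exactly because the hypothesis $M>\mathbf N(2-p)/4p$ is strict.

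\begin{itemize}
\item Your large-$t$ argument then gives $2^{-2M\theta j}2^{j\mathbf N(1/p-1/2)}w(2^jB)^{1/2-1/p}$ instead of $2^{-2Mj}2^{j\mathbf N(1/p-1/2)}w(2^jB)^{1/2-1/p}$.
\item On $t<2^{\theta(j-5)}r$ the factor $(t/2^jr)^K\leq C2^{-(1-\theta)jK}$ is genuinely small in $j$. For $K$ large it beats the loss $2^{j\mathbf N(1/p-1/2)}$ coming from $w(B)^{1/2-1/p}$ and the volume factor.
\item The separation $d(\mathbf y,\mathbf z)>2^{j-2}r$ still holds, since $t<2^{j-5}r$.
\end{itemize}

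An alternative is to cut at $t\approx r$, where your claimed factor $(r/2^jr)^K$ really appears. But then the range $r\leq t\leq 2^jr$ needs off-diagonal bounds for $(t^2\Delta_k)^{M+1}e^{-t^2\Delta_k}$ applied to the annular pieces of $\boldsymbol b$, not just Lemma~\ref{lem:Q_operator_bounded}. With either change, the rest of your proof goes through.
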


\begin{proof}
     The proof mimics that of~\cite[Theorem 5.2]{Hofman}. Fix a $(p,2,M,\Delta_k,\varepsilon)$-molecule $\boldsymbol{m}$ associated with a ball $B=B(\mathbf{x}_0,r)$, and let $\boldsymbol{b}\in \mathcal{D}(\Delta_k^M)$ be such that $\boldsymbol{m}=\Delta_k^M \boldsymbol{b}$. We aim to estimate the $L^p(dw)$-norm of $S\boldsymbol{m}$ by splitting the domain of integration into a local region and its complement:
\begin{equation*}
    \|S\boldsymbol{m}\|_{L^p(dw)}^p = \int_{\mathcal{O}(B(\mathbf{x}_0,16r))} |S\boldsymbol{m}(\mathbf{x})|^p\,dw(\mathbf{x}) + \int_{\mathcal{O}(B(\mathbf{x}_0,16r))^c} |S\boldsymbol{m}(\mathbf{x})|^p\,dw(\mathbf{x}).
\end{equation*}

For the integral over  $\mathcal{O}(B(\mathbf{x}_0,16r))$, applying H\"older's inequality together with the $L^2(dw)$-boundedness of the operator $S$, the relation $w(\mathcal{O}(B(\mathbf{x}_0,16r))) \sim w(B(\mathbf{x}_0,r))$  (see~\eqref{eq:growth}), and  \eqref{eq:L2Molecule}, we obtain
\begin{equation*}
\begin{split}
    \int\limits_{\mathcal{O}(B(\mathbf{x}_0,16r))} |S\boldsymbol{m}(\mathbf{x})|^p\, dw(\mathbf{x}) &\leq \|S\boldsymbol{m}\|_{L^2(dw)}^p w\big(\mathcal{O}(B(\mathbf{x}_0,16r))\big)^{\frac{2-p}{2}}\leq C\|\boldsymbol{m}\|_{L^2(dw)}^p w(B)^{\frac{2-p}{2}} \leq C.
    \end{split}
\end{equation*}
 The proof will be complete if we show that there exists an $\alpha=\alpha(p,M,\varepsilon)>0$ such that 
    \begin{equation}\label{eq:Sm_part}
        \| S\boldsymbol{m}\|_{L^2(U_j(B))}\leq C2^{-\alpha j} w(2^jB)^{1/2-1/p} \text{ for all } j\geq 5. 
    \end{equation}
Then, by the Cauchy--Schwarz inequality, the fact that $w(U_j(B)) \sim w(2^jB)$, and~\eqref{eq:Sm_part}, we get
\begin{equation*}
    \begin{split}
        \int\limits_{\mathcal{O}(B(\mathbf{x}_0,16r))^c} |S\boldsymbol{m}(\mathbf{x})|^p\, dw(\mathbf{x}) & \leq \sum_{j=5}^{\infty} \int\limits_{U_j(B)} |S\boldsymbol{m}(\mathbf{x})|^p\, dw(\mathbf{x}) \leq \sum_{j=5}^{\infty} \|S\boldsymbol{m}\|_{L^2(U_j(B))}^p w\big(U_j(B)\big)^{\frac{2-p}{2}}\\&\leq C\sum_{j=5}^{\infty}2^{-\alpha j} \leq C.
    \end{split}
\end{equation*}
 In the proof we shall frequently use the relation $w(B(\mathbf{x},t))\sim w(B(\mathbf{y},t))$ for all $t>0$ and $\mathbf{x},\mathbf{y}\in \mathbb{R}^N$ such that $d(\mathbf{x},\mathbf{y})<t$ (see~\eqref{eq:growth}). Let us take $\theta \in (0,1)$, which will be chosen later. We write 
    \begin{equation}
        \begin{split}
           \|S\boldsymbol{m}\|_{L^2(U_j(B))}^2 & =\int_{U_j(B)}\int_{0}^{2^{\theta (j-5)}r} \int_{d(\mathbf{x},\mathbf{y})<t} |Q_t*\boldsymbol{m}(\mathbf{y})|^2\frac{dw(\mathbf{y})}{w(B(\mathbf{x},t))}\frac{dt}{t}dw(\mathbf{x})\\
            &\ + \int_{U_j(B)}\int_{2^{\theta (j-5)}r} ^\infty \int_{d(\mathbf{x},\mathbf{y})<t} |Q_t*\Delta_k^M \boldsymbol{b}(\mathbf{y})|^2\frac{dw(\mathbf{y})}{w(B(\mathbf{x},t))}\frac{dt}{t}dw(\mathbf{x})=:I+J.        \end{split}
    \end{equation}
    We start with $J$. First, observe that by Fubini's theorem,
    \begin{equation*}
        J \leq C \int_{2^{\theta (j-5)}r} ^\infty \int_{\mathbb{R}^N} |t^{2M}\Delta_k^{M}Q_t*\boldsymbol{b}(\mathbf{y})|^2dw(\mathbf{y})\frac{dt}{t^{4M+1}}.
    \end{equation*}
    Applying Lemma~\ref{lem:Q_operator_bounded}, condition~\eqref{eq:molecule_condition}, and~\eqref{eq:growth},
    \begin{equation*}
    \begin{split}
         J &\leq C \int_{2^{\theta (j-5)}r} ^\infty \|\boldsymbol{b}\|_{L^2(dw)}^2\frac{dt}{t^{4M+1}}\leq C 2^{-4M\theta(j-5)}r^{4M}r^{-4M} w(B)^{1-2/p}\\
            &= C 2^{-4M\theta(j-5)} w(2^jB)^{2/p-1}w(B)^{1-2/p} w(2^jB)^{1-2/p}\\
            &\leq C 2^{-4M\theta(j-5)}2^{j\mathbf{N}(2/p-1)}w(2^jB)^{1-2/p}=C 2^{-2\alpha j}w(2^jB)^{1-2/p},
         \end{split}
    \end{equation*}
    where $\alpha=2M\theta -\mathbf{N}\left(\frac{1}{p}-\frac{1}{2}\right)>0$, {provided $0<\theta<1$ is chosen sufficiently close to 1}.

    We turn to $I$. For $j \geq 5$, consider the sets 
    \begin{equation*}
        V_j=\{\mathbf{z}\in\mathbb{R}^N:d(\mathbf{x}_0,\mathbf{z})\geq 2^{j-3}r\}, \quad W_j=\{ \mathbf{z}\in\mathbb{R}^N: d(\mathbf{x}_0,\mathbf{z})< 2^{j-3}r\}.
    \end{equation*}
    We split
    \begin{equation*}
        \boldsymbol{m}=\boldsymbol{m}\chi_{V_j}+\boldsymbol{m}\chi_{W_j}:=\boldsymbol{m}_1+\boldsymbol{m}_2.
    \end{equation*}
We start with the term involving $\boldsymbol{m}_1$.     By Fubini's theorem, we get
     \begin{equation*}
\begin{split}
    I_1:&=\int_{U_j(B)}\int_{0}^{2^{\theta (j-5)}r} \int_{d(\mathbf{x},\mathbf{y})<t} |Q_t*\boldsymbol{m}_1(\mathbf{y})|^2\frac{dw(\mathbf{y})}{w(B(\mathbf{x},t))}\frac{dt}{t}dw(\mathbf{x}) \\
    &\leq C \int_{0}^{2^{\theta (j-5)}r} \int_{\mathbb{R}^N} |Q_t*\boldsymbol{m}_1(\mathbf{y})|^2 \left( \int_{d(\mathbf{x},\mathbf{y})<t} \frac{dw(\mathbf{x})}{w(B(\mathbf{y},t))} \right) dw(\mathbf{y}) \frac{dt}{t} \\
    &\leq C \int_{0}^{2^{\theta (j-5)}r} \int_{\mathbb{R}^N} |Q_t*\boldsymbol{m}_1(\mathbf{y})|^2\,dw(\mathbf{y})\,\frac{dt}{t}.
\end{split} 
\end{equation*}
     Then, by Lemma~\ref{lem:eta} applied to $\eta_t=Q_t$ and Lemma~\ref{lem:L2molecule} we get
     \begin{equation*}
      I_1\leq    \int_{0}^{2^{\theta (j-5)}r} \int_{\mathbb{R}^N} |Q_t*\boldsymbol{m}_1(\mathbf{y})|^2\,dw(\mathbf{y})\,\frac{dt}{t} \leq C\|\boldsymbol{m}_1\|_{L^2(dw)}^2 \leq C2^{-2\varepsilon j}w(2^jB)^{1-2/p}.
     \end{equation*}
      To estimate the term involving $\boldsymbol{m}_2$, we first note that for $\mathbf{x}\in U_j(B)$, $d(\mathbf{x},\mathbf{y})<t\leq 2^{\theta(j-5)}r$ and     
      $\mathbf{z}\in W_j$, we have $ d(\mathbf y,\mathbf z)>2^{j-2}r$. Hence, for such points, by \eqref{eq:Qt-bound},  we obtain 
     \begin{equation*}
             |Q_t(\mathbf{y},\mathbf{z})|\leq C w(B(\mathbf{x},t))^{-1} \exp(-c2^jr/t).
     \end{equation*}
      Consequently, for $\mathbf x\in U_j(B)$ and $d(\mathbf{x},\mathbf{y})\leq t\leq 2^{\theta(j-5)}r$, we have
      \begin{equation*}
      \begin{split}
          |Q_t*\boldsymbol{m}_2(\mathbf{y})| & \leq C w(B(\mathbf{x},t))^{-1}\exp(-c2^jr/t)\|\boldsymbol{m}_2\|_{L^1(dw)}  \\
          &\leq C w(B(\mathbf{x},t))^{-1}\exp(-c2^jr/t)w(2^{j-3}B)^{1/2}\|\boldsymbol{m}_2\|_{L^2(dw)}\\
          &\leq C w(B(\mathbf{x},t))^{-1}\exp(-c2^jr/t)w(2^{j-3}B)^{1/2}w(B)^{1/2-1/p},
      \end{split} \end{equation*}
      where we have used the Cauchy--Schwarz inequality in the second step, and \eqref{eq:m_on_U} in the last one. 
      Fixing an integer $K \in \mathbb{N}$ to be chosen later, we obtain 
 \begin{equation*}
 \begin{split}
     |Q_t*\boldsymbol m_2(\mathbf y)|&\leq C_K  w(B(\mathbf{x},t))^{-1}\Big(\frac{t}{2^jr}\Big)^K w(2^{j-3}B)^{1/2}w(B)^{1/2-1/p}\\
     &\leq C_K \frac{w(B(\mathbf x, 2^jr))}{w(B(\mathbf x,t))}\frac{1}{w(B(\mathbf x,2^jr))} \Big(\frac{t}{2^jr}\Big)^Kw(2^jB)^{1/2} \frac{w(2^jB)^{1/p-1/2}}{{w(B)}^{1/p-1/2}} w(2^jB)^{1/2-1/p}.
     \end{split} 
 \end{equation*}
      Observe that  $w(B(\mathbf x,2^jr))\sim w(2^jB)\sim w(U_j(B))$ for $\mathbf x\in U_j(B)$. Thus,  applying \eqref{eq:growth}, we get 
      \begin{equation*}
          |Q_t*\boldsymbol m_2(\mathbf y)|\leq C_K w(U_j(B))^{-1/2}\left(\frac{2^jr}{t}\right)^{\mathbf N-K} 2^{j(1/p-1/2)\mathbf N} w(2^jB)^{1/2-1/p} \quad \text{ for } d(\mathbf x,\mathbf y)<t\leq 2^{\theta (j-5)}r. 
      \end{equation*}
    Finally, 
    
      \begin{equation*}
      \begin{split}
          I_2&=\int_{U_j(B)}\int_{0}^{2^{\theta (j-5)}r} \int_{d(\mathbf{x},\mathbf{y})<t} |Q_t*\boldsymbol{m}_2(\mathbf{y})|^2 \frac{dw(\mathbf{y})}{w(B(\mathbf{x},t))} \frac{dt}{t} dw(\mathbf{x}) \\
          &\leq C_K w(U_j(B))^{-1} w(2^jB)^{1-\frac{2}{p}} 2^{2j(1/p-1/2)\mathbf N} \int_{U_j(B)} \left( \int_{0}^{2^{\theta (j-5)}r} \left(\frac{2^jr}{t}\right)^{2\mathbf{N}-2K} \frac{dt}{t} \right) dw(\mathbf{x}) \\
          &\leq C_K w(2^jB)^{1-\frac{2}{p}} 2^{j\mathbf{N}(\frac{2}{p}-1)} 2^{-j(2K-2\mathbf{N})(1-\theta)}.
      \end{split}
      \end{equation*}
 Now, choosing $K$ large enough, we obtain the desired bound $I_2\leq C2^{-2\alpha j}w(2^jB)^{1-2/p}$. 
\end{proof}

We conclude this section with a proposition that will be used later.

For $\mathbf{n} \in \mathbb{N}$, we define a semi-norm on $\mathcal S(\mathbb{R}^N)$:
\begin{equation*}
        \|f\|_{(\mathbf{n})}=\sup_{\mathbf{x}\in\mathbb{R}^N, |\boldsymbol \alpha|\leq \mathbf{n}}\{ (1+\|\mathbf{x}\|)^{\mathbf{n}} |\partial^{\boldsymbol\alpha}f(\mathbf{x})|\}.
    \end{equation*} 
 
\begin{proposition}\label{lem:atom_S}
    Fix $0<p\leq 1$ and $M>\mathbf{N}(2-p)/4p$. Let $\mathbf{n}>\max (2M,\mathbf{N})$ be a positive integer. There is  a constant $C>0$ such that for all $f \in \mathcal{S}(\mathbb{R}^N)$ and every $ (p,2,M,\Delta_k)$-atom $\boldsymbol{a}(\cdot)$, we have 
    \begin{equation*}
        \Big|\int_{\mathbb{R}^N} \boldsymbol{a}(\mathbf{x}) f(\mathbf{x})\, dw(\mathbf{x})\Big|\leq C\|f\|_{(\mathbf{n})}.
    \end{equation*}
\end{proposition}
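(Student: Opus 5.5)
Since $\boldsymbol a=\Delta_k^M\boldsymbol b$ with $\boldsymbol b\in\mathcal D(\Delta_k^M)$ and $\Delta_k$ is symmetric on $L^2(dw)$, we move the Laplacian onto the test function:
\[
\int_{\mathbb{R}^N}\boldsymbol a(\mathbf x) f(\mathbf x)\,dw(\mathbf x)=\int_{\mathbb{R}^N}\boldsymbol b(\mathbf x)\,\Delta_k^M f(\mathbf x)\,dw(\mathbf x),
\]
which is justified on the Dunkl transform side by \eqref{eq:Laplacian_on_Fourier_side} and Plancherel \eqref{eq:Plancherel}, because $f\in\mathcal S(\mathbb{R}^N)$. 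This gives two bounds. First, directly by Cauchy--Schwarz,
\[
|\langle \boldsymbol a,f\rangle|\le \|\boldsymbol a\|_{L^2(\mathcal O(B))}\|f\|_{L^2(\mathcal O(B))}.
\]
Second,
\[
|\langle \boldsymbol a,f\rangle|\le \|\boldsymbol b\|_{L^2(dw)}\,\|\Delta_k^Mf\|_{L^2(\mathcal O(B),dw)}.
\]
Note that $\boldsymbol a$ is also supported in $\mathcal O(B)$, since $\Delta_k$ does not enlarge $G$-invariant supports (it is a differential operator plus reflections). The size condition \eqref{eq:size} gives $\|\boldsymbol a\|_{L^2}\le w(B)^{1/2-1/p}$ (take $m=M$) and $\|\boldsymbol b\|_{L^2}\le r_B^{2M}w(B)^{1/2-1/p}$ (take $m=0$).

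Next we control $f$ and $\Delta_k^Mf$ pointwise by $\|f\|_{(\mathbf n)}$. Write $\Delta_k$ via \eqref{eq:laplace_formula}. The difference terms $\delta_\alpha$ are expressed through integrals of derivatives along the segment from $\mathbf x$ to $\sigma_\alpha\mathbf x$, for instance
\[
\frac{f(\mathbf x)-f(\sigma_\alpha\mathbf x)}{\langle\alpha,\mathbf x\rangle}=\int_0^1\partial_\alpha f\big(\mathbf x-s\langle\alpha,\mathbf x\rangle\alpha\big)\,ds,
\]
and a second-order Taylor formula handles $\delta_\alpha$. The segment stays within the ball of radius $\|\mathbf x\|$, but its points can be closer to the origin than $\mathbf x$. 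Iterating $M$ times, $|\Delta_k^Mf(\mathbf x)|$ is bounded by $C\sup_{\|\mathbf z\|\le \|\mathbf x\|,\,|\boldsymbol\beta|\le 2M}|\partial^{\boldsymbol\beta}f(\mathbf z)|$. The decay should come from the $G$-invariance of the norm: the segment lies in $\operatorname{conv}\mathcal O(\mathbf x)$, where the norms are at most $\|\mathbf x\|$, and the decay factor $(1+\|\mathbf z\|)^{-\mathbf n}$ is not automatically comparable to $(1+\|\mathbf x\|)^{-\mathbf n}$. The cleanest route is to avoid needing decay of $\Delta_k^Mf$ at all. We only use $|\Delta_k^Mf|\le C\|f\|_{(\mathbf n)}$ (valid since $\mathbf n\ge 2M$) for small balls, and $|f(\mathbf x)|\le \|f\|_{(\mathbf n)}(1+\|\mathbf x\|)^{-\mathbf n}$, a $G$-invariant weight, for large balls.

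Then we split into cases by $r_B$.

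\textbf{Case $r_B\le 1$.} Use the $\boldsymbol b$-bound:
\[
|\langle \boldsymbol a,f\rangle|\le C r_B^{2M}w(B)^{1/2-1/p}\,w(\mathcal O(B))^{1/2}\|f\|_{(\mathbf n)}\le C r_B^{2M}w(B)^{1-1/p}\|f\|_{(\mathbf n)}.
\]
By \eqref{eq:balls_asymp}, $w(B)\ge c\,r_B^{\mathbf N}$, and $1-1/p\le 0$, so $w(B)^{1-1/p}\le Cr_B^{\mathbf N(1-1/p)}$. It remains to know that $2M+\mathbf N(1-1/p)\ge 0$. This follows from $M>\mathbf N(2-p)/4p$, since $\mathbf N(2-p)/(4p)\ge \mathbf N(1-p)/(2p)$, i.e. $4M>2\mathbf N(1/p-1)$.

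\textbf{Case $r_B>1$.} Use the $\boldsymbol a$-bound with the weighted estimate
\[
\|f\|_{L^2(\mathcal O(B))}\le \|f\|_{(\mathbf n)}\Big(\int_{\mathcal O(B)}(1+\|\mathbf x\|)^{-2\mathbf n}\,dw\Big)^{1/2}.
\]
This is at most $C\|f\|_{(\mathbf n)}$ because $\mathbf n>\mathbf N$ makes $(1+\|\mathbf x\|)^{-2\mathbf n}$ integrable against $dw$, whose volume grows like $R^{\mathbf N}$. Moreover $w(B)^{1/2-1/p}\le C$, since $w(B)\ge c r_B^{\mathbf N}\ge c$ when $r_B>1$ and the exponent is negative.

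The main obstacle is the pointwise control of $\Delta_k^Mf$ by $\|f\|_{(\mathbf n)}$, specifically the difference quotients near the hyperplanes $\langle\alpha,\mathbf x\rangle=0$. I would handle these with the integral Taylor representation above, which needs derivatives up to order $2M$; this is exactly why $\mathbf n>2M$ is assumed.
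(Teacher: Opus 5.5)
Your proof is correct. It has the same skeleton as the paper's: split at $r_B=1$; for large balls, use Cauchy--Schwarz with $\|\boldsymbol a\|_{L^2}\le w(B)^{1/2-1/p}\lesssim 1$ and $\|f\|_{L^2(dw)}\lesssim\|f\|_{(\mathbf n)}$; for small balls, move $\Delta_k^M$ onto $f$.

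The real difference is in the small-ball case.

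\textbf{The paper's route.} It bounds $\|\Delta_k^Mf\|_{L^2(dw)}$ over all of $\mathbb R^N$ by $\|f\|_{(\mathbf n)}$ (Lemma~\ref{lem:L_2_Schwartz}). This requires showing that Dunkl operators preserve polynomial decay (Lemma~\ref{lem:ord_to_Dunkl}), including a separate analysis near the hyperplanes $\langle\mathbf x,\alpha\rangle=0$. The resulting bound is $r_B^{2M}w(B)^{1/2-1/p}$, which needs $2M>\mathbf N(1/p-1/2)$.

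\textbf{Your route.} You use $\operatorname{supp}\boldsymbol b\subseteq\mathcal O(B)$ and $w(\mathcal O(B))\le |G|\,w(B)$. You then need only the uniform estimate $\|\Delta_k^Mf\|_{L^\infty}\lesssim\|f\|_{(\mathbf n)}$, which is the easy part \eqref{eq:small_x} of the paper's appendix. This gives $r_B^{2M}w(B)^{1-1/p}$ and needs only $2M\ge\mathbf N(1/p-1)$, which the hypothesis implies. So you correctly sidestep the decay issue you were worried about, and your argument is more elementary.

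\textbf{What each route buys.} The paper's version uses only the $L^2$ size of $\boldsymbol b$, never its support. It therefore transfers verbatim to any $\boldsymbol b$ with the right $L^2$ bound, and the decay lemma is a reusable tool. Your version trades that generality for a purely $L^\infty$ estimate on $\Delta_k^Mf$, with the localization doing the work.

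Two minor remarks. Your observation that $\boldsymbol a$ is supported in $\mathcal O(B)$ is true (the paper uses the same fact in Section~\ref{sec:Hormander}), but you do not need it for $r_B>1$, since you bound $\|f\|_{L^2}$ over all of $\mathbb R^N$ anyway. Also, integrability of $(1+\|\mathbf x\|)^{-2\mathbf n}$ against $dw$ requires only $\mathbf n>\mathbf N/2$.
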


We need the following lemma, whose proof we include in the appendix. 

\begin{lemma}\label{lem:L_2_Schwartz}
    Let $M_1\geq 0$, $M_1 \in \mathbb{Z}$, and  $\mathbf{n}>\max\left(\frac{\mathbf{N}}{2},2M_1\right)$. Then, there is a constant $C>0$ such that for all $f \in \mathcal{S}(\mathbb{R}^N)$ we have
    \begin{equation*}
        \|(\Delta_k)^{M_1}f\|_{L^2(dw)} \leq C\|f\|_{(\mathbf{n})}.
    \end{equation*}
\end{lemma}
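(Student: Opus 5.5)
The statement to prove is Lemma~\ref{lem:L_2_Schwartz}: $\|\Delta_k^{M_1}f\|_{L^2(dw)}\leq C\|f\|_{(\mathbf n)}$. My plan is to reduce it to a pointwise weighted bound
\[
|\Delta_k^{M_1}f(\mathbf x)|\leq C\|f\|_{(\mathbf n)}(1+\|\mathbf x\|)^{-\mathbf n}.
\]
Once this holds, integrating gives $\int (1+\|\mathbf x\|)^{-2\mathbf n}\,dw(\mathbf x)<\infty$. Indeed, by \eqref{eq:balls_asymp} we have $w(B(0,R))\sim R^{\mathbf N}$, so the integral converges exactly when $2\mathbf n>\mathbf N$, which is one of the hypotheses.

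The core is a pointwise estimate for the Dunkl operators themselves. Using the formula \eqref{eq:T_xi}, I write the difference quotient as
\[
\frac{g(\mathbf x)-g(\sigma_\alpha\mathbf x)}{\langle\alpha,\mathbf x\rangle}=\int_0^1 \langle \nabla g(\mathbf x-s\langle\alpha,\mathbf x\rangle\alpha),\alpha\rangle\,ds,
\]
since $\sigma_\alpha\mathbf x=\mathbf x-\langle\alpha,\mathbf x\rangle\alpha$ (recall $\|\alpha\|^2=2$). The segment from $\mathbf x$ to $\sigma_\alpha\mathbf x$ stays in the closed ball of radius $\|\mathbf x\|$, because $\sigma_\alpha$ is an isometry. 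Hence it does \emph{not} stay away from the origin, so pointwise decay is not obviously preserved.

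To handle this, I use the iterated representation. $\Delta_k^{M_1}$ is a composition of $2M_1$ Dunkl operators, and each one is either a derivative or such an averaged derivative along a segment $[\mathbf x,\sigma_\alpha\mathbf x]$. Iterating, $\Delta_k^{M_1}f(\mathbf x)$ becomes a finite sum of integrals, over a product of unit intervals with bounded coefficients, of $\partial^{\boldsymbol\beta} f$ with $|\boldsymbol\beta|\le 2M_1$. These derivatives are evaluated at points $\mathbf z$ that are obtained from $\mathbf x$ by compositions of convex combinations and reflections. All such points lie in $\operatorname{conv}\mathcal O(\mathbf x)$, so $\|\mathbf z\|\le\|\mathbf x\|$. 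However, their norm can be much smaller than $\|\mathbf x\|$, which is the main obstacle.

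My first idea for getting around this is to avoid pointwise decay near $\mathbf x$ and instead bound the parameter integrals in $L^2(dw)$ via Minkowski's inequality. For fixed parameters, the map $\mathbf x\mapsto \mathbf z(\mathbf x)$ is linear, namely $\mathbf x\mapsto (I-s\alpha\alpha^{T})\mathbf x$ composed further. This map may be degenerate, for example $s=1/2$ projects onto $\alpha^\perp$, so a change of variables is unsafe.

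I would therefore use the simpler route and split into two regions:
\begin{itemize}
\item On $\|\mathbf x\|\le 1$, the integrand is bounded by $\|f\|_{(\mathbf n)}$, since $|\boldsymbol\beta|\le 2M_1<\mathbf n$. Because $w(B(0,1))<\infty$, this region contributes at most $C\|f\|_{(\mathbf n)}$.
\item On $\|\mathbf x\|>1$, each difference term is handled without the integral representation. I bound $|g(\mathbf x)-g(\sigma_\alpha\mathbf x)|/|\langle\alpha,\mathbf x\rangle|$ by the integral form when $|\langle\alpha,\mathbf x\rangle|\le \|\mathbf x\|/2$. In that case the segment stays at distance at least $c\|\mathbf x\|$ from $0$: its points are $\mathbf x-s\langle\alpha,\mathbf x\rangle\alpha$, whose component orthogonal to $\alpha$ equals that of $\mathbf x$ and has norm at least $c\|\mathbf x\|$. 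Otherwise I use the trivial bound $2\sup_{\mathcal O(\mathbf x)}|g|/(\|\mathbf x\|/2)$.
\end{itemize}
Inductively, this shows that each application of $T_j$ maps functions with $\sup_{|\boldsymbol\beta|\le \ell}(1+\|\mathbf x\|)^{\mathbf n}|\partial^{\boldsymbol\beta}g|$ finite to functions with the analogous bound at level $\ell-1$. This uses the $G$-invariance of $\|\cdot\|$ and the fact that derivatives of the difference term again have this form. Carrying the induction $2M_1\le\mathbf n$ steps gives the desired pointwise bound for $\|\mathbf x\|>1$.

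Combining the two regions with the integrability of $(1+\|\mathbf x\|)^{-2\mathbf n}$ finishes the proof. The delicate part I expect is making the induction rigorous for derivatives of difference quotients. The step that will need care is the case split on $|\langle\alpha,\mathbf x\rangle|$ relative to $\|\mathbf x\|$.
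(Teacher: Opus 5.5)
Your proposal is correct and is essentially the paper's own proof. The paper also transfers the weighted decay $(1+\|\mathbf x\|)^{-\mathbf n}$ from $\partial^{\boldsymbol\beta}f$ to $T^{\boldsymbol\beta}f$ by induction on the order (Lemma \ref{lem:ord_to_Dunkl}), using the integral representation of the difference quotient to get a uniform bound near the origin and, away from it, splitting on the size of $|\langle\mathbf x,\alpha\rangle|$, before integrating with $\mathbf n>\mathbf N/2$. The differences are cosmetic: the paper uses the absolute threshold $1/10$ with $\|\mathbf x\|>2$ instead of your relative threshold $\|\mathbf x\|/2$, and, as there, you should state the inductive step globally, covering $\|\mathbf x\|\le 1$ at every step by the $L^\infty$ bound from the integral representation.
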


\begin{proof}[Proof of Proposition \ref{lem:atom_S}]

It follows from \eqref{eq:balls_asymp} that there is a constant $c>0$ such that for all $r>0$ and $\mathbf{x}_0 \in \mathbb{R}^N$ we have
\begin{equation*}
    cr^{\mathbf{N}}=w(B(0,r))\leq w(B(\mathbf{x}_0,r)). 
\end{equation*}
     Let $\boldsymbol{a}$ be a $(p,2,M,\Delta_k)$-atom   associated with a ball $B(\mathbf{x}_0,r)$. We consider two cases.

     \medskip
     \noindent\textbf{Case 1:} $r \geq 1$. Applying the Cauchy--Schwarz inequality and Lemma~\ref{lem:L_2_Schwartz} with $M_1=0$,  we get
    \begin{equation*}
       \Big|\int_{\mathbb{R}^N} \boldsymbol{a}(\mathbf{x}) f(\mathbf{x})\, dw(\mathbf{x})\Big|\leq \|\boldsymbol{a}\|_{L^2(dw)}  \|f\|_{L^2(dw)}\leq Cw(B(\mathbf{x}_0,r))^{\frac{1}{2}-\frac{1}{p}} \|f\|_{(\mathbf{n})}\leq C\|f\|_{(\mathbf{n})}, 
    \end{equation*}
   since $w(B(\mathbf{x}_0,r))\geq c$ and  $\frac{1}{2}-\frac{1}{p}<0$. 

 \medskip
     \noindent\textbf{Case 2:} $0<r < 1$.   
     By the definition of a $(p,2,M,\Delta_k)$-atom, there is $\boldsymbol{b}\in \mathcal{D}(\Delta_k^M)$ satisfying~\eqref{eq:support} and~\eqref{eq:size} such that $\boldsymbol{a}=\Delta_k^M{\boldsymbol{b}}$. Hence, applying the Cauchy--Schwarz inequality, self-adjointness of $\Delta_k$, and Lemma~\ref{lem:L_2_Schwartz} with $M_1=M$, we obtain
     \begin{equation*}
     \begin{split}
       \Big|\int_{\mathbb{R}^N} \boldsymbol{a}(\mathbf{x}) f(\mathbf{x})\, dw(\mathbf{x})\Big|&=\Big| \int_{\mathbb{R}^N} \Delta_k^M\boldsymbol{b}(\mathbf{x})f(\mathbf{x})\, dw(\mathbf{x})\Big|\\
     &=\Big| \int_{\mathbb{R}^N} \boldsymbol{b}(\mathbf{x})\Delta_k^Mf(\mathbf{x})\, dw(\mathbf{x})\Big|\leq 
     \|\boldsymbol{b}\|_{L^2(dw)} \|\Delta_k^M f\|_{L^2(dw)}\\  
    &  \leq Cr^{2M}w(B(\mathbf{x}_0,r))^{\frac{1}{2}-\frac{1}{p}} \|f\|_{(\mathbf{n})} \leq Cr^{2M}r^{\mathbf{N}(\frac{1}{2}-\frac{1}{p})}\|f\|_{(\mathbf{n})}\leq C \|f\|_{(\mathbf{n})}, 
    \end{split} \end{equation*}
    where in the last inequality we have used   $0<r < 1$ and  the assumption $M>\mathbf{N}(2-p)/4p$, which guarantees that the total exponent of $r$ is strictly positive.  
\end{proof}

\subsection{Definition of the atomic Hardy space}\label{section-atomic}

We begin this part with the following proposition, which will be used in the definition of the $H^p_{M-{\rm atom}}$ space. 

\begin{proposition}
\label{prop:dist_convergence} Let $0<p\leq 1$ and $M \in \mathbb{Z}$ be such that $M>\mathbf{N}(2-p)/4p$. 
    For each $n \in \mathbb{N}$, let $\lambda_n\in\mathbb C$ and let $\boldsymbol{a}_n$ be a $(p,2,M,\Delta_k)$-atom. If $\sum_{n=1}^{\infty}|\lambda_n|^p<\infty$, then the series $\sum_{n=1}^\infty \lambda_n \boldsymbol{a}_n$
    converges (unconditionally) in $\mathcal S'(\mathbb{R}^N)$.
\end{proposition}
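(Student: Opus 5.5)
The plan is to reduce the statement to Proposition~\ref{lem:atom_S}. Fix an integer $\mathbf{n}>\max(2M,\mathbf{N})$. Each atom $\boldsymbol{a}_n$ lies in $L^2(dw)$, and Lemma~\ref{lem:L_2_Schwartz} with $M_1=0$ gives $\mathcal S(\mathbb{R}^N)\subset L^2(dw)$. Hence $\boldsymbol{a}_n$ defines a tempered distribution through $\langle \boldsymbol{a}_n,f\rangle=\int_{\mathbb{R}^N}\boldsymbol{a}_n(\mathbf x) f(\mathbf x)\,dw(\mathbf x)$. By Proposition~\ref{lem:atom_S},
\begin{equation*}
|\langle \boldsymbol{a}_n,f\rangle|\leq C\|f\|_{(\mathbf{n})}, \qquad f\in\mathcal S(\mathbb{R}^N),
\end{equation*}
with $C$ independent of $n$ and of the atom.

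Next, since $0<p\leq 1$ and $\sum_n|\lambda_n|^p<\infty$, we have $|\lambda_n|\le (\sum_m|\lambda_m|^p)^{1/p}$ for every $n$. Therefore
\begin{equation*}
\sum_n|\lambda_n|\le \Big(\sum_n|\lambda_n|^p\Big)^{1/p}<\infty,
\end{equation*}
which is the $\ell^p\subset\ell^1$ embedding. Consequently, for each $f\in\mathcal S(\mathbb{R}^N)$,
\begin{equation*}
\sum_n |\lambda_n|\,|\langle \boldsymbol{a}_n,f\rangle|\le C\Big(\sum_n|\lambda_n|^p\Big)^{1/p}\|f\|_{(\mathbf{n})}<\infty.
\end{equation*}
So the scalar series $\sum_n\lambda_n\langle\boldsymbol{a}_n,f\rangle$ converges absolutely, and hence unconditionally. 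We define $\langle F,f\rangle$ to be its sum.

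It remains to check that $F$ is a tempered distribution and that the partial sums converge to $F$ in $\mathcal S'$. Linearity of $F$ is clear. The bound above gives
\begin{equation*}
|\langle F,f\rangle|\le C\Big(\sum_n|\lambda_n|^p\Big)^{1/p}\|f\|_{(\mathbf{n})},
\end{equation*}
and $\|\cdot\|_{(\mathbf{n})}$ is one of the continuous seminorms defining the topology of $\mathcal S(\mathbb{R}^N)$, so $F\in\mathcal S'(\mathbb{R}^N)$. For any finite set $\Lambda\subset\mathbb N$, the same estimate gives
\begin{equation*}
\Big|\Big\langle F-\sum_{n\in\Lambda}\lambda_n\boldsymbol{a}_n,f\Big\rangle\Big|\le C\sum_{n\notin\Lambda}|\lambda_n|\,\|f\|_{(\mathbf{n})}.
\end{equation*}
The right-hand side tends to $0$ along any exhausting sequence of finite sets $\Lambda$, uniformly on $\|\cdot\|_{(\mathbf n)}$-bounded sets. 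This is convergence in the weak-$*$ (even strong) topology of $\mathcal S'$, independently of the order of summation, which is the claimed unconditional convergence.

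The argument has no genuine obstacle; all the analytic content is in Proposition~\ref{lem:atom_S}. The two points to watch are that the constant there is uniform over all atoms, which is exactly why the case split $r\ge1$ versus $r<1$ and the condition $M>\mathbf N(2-p)/4p$ are needed, and that the $\ell^p\subset\ell^1$ embedding requires $p\le1$.
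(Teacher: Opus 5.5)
Your proposal is correct and is essentially the paper's proof. The paper likewise combines the uniform atom bound of Proposition~\ref{lem:atom_S} with $\sum_n|\lambda_n|\le(\sum_n|\lambda_n|^p)^{1/p}$ to bound $\sum_n|\lambda_n|\,|\langle\boldsymbol{a}_n,\varphi\rangle|$ by $C(\sum_n|\lambda_n|^p)^{1/p}\|\varphi\|_{(\mathbf{n})}$; you only spell out in more detail why the limit is a tempered distribution and why the convergence is unconditional.
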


\begin{proof} 
    Let $\mathbf n$ and $C>0$ be as in  Lemma \ref{lem:atom_S}. Then,   for any $\varphi \in \mathcal{S}(\mathbb{R}^N)$, we have
    \begin{equation*}
        \sum_{n=1}^{\infty}|\lambda_n|\left|\langle \boldsymbol{a}_n,\varphi\rangle\right| \leq C\Big(\sum_{n=1}^{\infty}|\lambda_n|\Big)\|\varphi\|_{(\mathbf n)}\leq C\Big(\sum_{n=1}^{\infty}|\lambda_n|^p\Big)^{1/p}\|\varphi\|_{(\mathbf n)},
    \end{equation*}
    which implies the proposition. 
\end{proof}

\begin{definition}\label{def:atomic_Hardy}
    Suppose $0 < p \leq 1$, $M > \mathbf{N}(2-p)/4p$, and $M \in \mathbb{Z}$. The atomic Hardy space $H^p_{M\text{-}\mathrm{atom}}$ is defined as the set of all distributions $f \in \mathcal{S}'(\mathbb{R}^N)$ that can be represented as
    \[
        f = \sum_{j=1}^{\infty} \lambda_j \boldsymbol{a}_j,
    \]
    where the series converges in $\mathcal{S}'(\mathbb{R}^N)$, $\lambda_j \in \mathbb{C}$ with $\sum_{j=1}^{\infty} |\lambda_j|^p < \infty$, and each $\boldsymbol{a}_j$ is a $(p,2,M,\Delta_k)\text{-atom}$.
    
    For such $f$, we define 
    \[
        \| f\|_{H^p_{M\text{-}\mathrm{atom}}}^p = \inf \sum_{j=1}^{\infty} |\lambda_j|^p,
    \]
    where the infimum is taken over all representations of $f$ of the form above.
\end{definition}

The principal objective of Section~\ref{sec:Hardy} is to establish the following result.

\begin{theorem}\label{teo:H_p_coincides}
     Suppose $0 < p \leq 1$, $M > \mathbf{N}(2-p)/4p$, and $M \in \mathbb{Z}$. Then the spaces $H^p_{\mathrm{Dunkl}}$ and $H^p_{M\mathrm{-atom}}$ coincide, and their respective quasi-norms, $\|\cdot \|_{H^p_{\mathrm{Dunkl}}}$ and $\|\cdot \|_{H^p_{M\mathrm{-atom}}}$, are equivalent. 
\end{theorem}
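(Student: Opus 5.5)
The proof has two inclusions, each following the tent-space approach of \cite{Hofman} and \cite{Duong}, adapted to the orbit geometry.

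\textbf{Step 1: $\mathbb H^p_{\mathrm{Dunkl}}\subset H^p_{M\text{-}\mathrm{atom}}$ with $\|f\|_{H^p_{M\text{-}\mathrm{atom}}}\le C\|f\|_{\mathbb H^p_{\mathrm{Dunkl}}}$.}
Take $f\in\mathbb H^p_{\mathrm{Dunkl}}$ and set $F(t,\mathbf x)=Q_t*f(\mathbf x)$. Then $F\in T_2^p\cap T_2^2$. By Theorem~\ref{teo:atomic_decomposition_tent} and Remark~\ref{remarkAtom},
\[
F=\sum_j\lambda_jA_j,\qquad A_j\ \text{supported in}\ \widehat{B_j},\qquad \sum_j|\lambda_j|^p\sim\|F\|_{T^p_2}^p,
\]
with convergence in $T_2^2$. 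Apply the reproducing formula \eqref{eq:reproduction_1} and use the $L^2$-boundedness \eqref{L2backL2} of $\pi_\Theta$ to get
\[
f=\sum_j\lambda_j\boldsymbol a_j\ \text{ in } L^2(dw),\qquad \boldsymbol a_j=-\int_0^\infty \phi_t*\phi_t*A_j(t,\cdot)\,\frac{dt}{t}.
\]
Since $\phi_t=t^{2M}\Delta_k^M\psi_t$, we can write
\[
\boldsymbol a_j=\Delta_k^M\boldsymbol b_j,\qquad \boldsymbol b_j=-\int_0^{r_j}t^{2M}\psi_t*\phi_t*A_j(t,\cdot)\,\frac{dt}{t}.
\]
Here the cutoff $t<r_j$ comes from the tent support of $A_j$.

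\emph{Support of $\boldsymbol b_j$.} For $(t,\mathbf y)\in\widehat{B_j}$ we have $B(\mathbf y,t)\subset B_j$. The function $\psi_t*\phi_t$ is radial with support in $B(0,2t)$, so Theorem~\ref{teo:support} gives $\operatorname{supp}\boldsymbol b_j\subset\mathcal O(2B_j)$. After rescaling the ball, this is condition \eqref{eq:support}.

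\emph{Size of $\boldsymbol b_j$.} Write
\[
(r_j^2\Delta_k)^m\boldsymbol b_j=r_j^{2m}\int_0^{r_j}t^{2M-2m}\,(t^{2m}\Delta_k^m\psi_t)*\phi_t*A_j(t,\cdot)\,\frac{dt}{t}.
\]
Bound $t^{2M-2m}\le r_j^{2M-2m}$ and use duality as in \eqref{L2backL2}, together with Lemma~\ref{lem:eta} for the kernels $t^{2m}\Delta_k^m\psi_t*\phi_t$, whose Dunkl transforms vanish to order two at the origin. This gives
\[
\|(r_j^2\Delta_k)^m\boldsymbol b_j\|_2\le C r_j^{2M}\|A_j\|_{L^2(dw\,dt/t)}\le Cr_j^{2M}w(B_j)^{1/2-1/p},
\]
which is \eqref{eq:size} up to a constant.

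\emph{Convergence.} The series converges in $L^2(dw)$, hence in $\mathcal S'$. This proves the inclusion and the norm bound for $\mathbb H^p_{\mathrm{Dunkl}}$.

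\textbf{Step 2: $H^p_{M\text{-}\mathrm{atom}}\subset H^p_{\mathrm{Dunkl}}$.}
Let $f=\sum\lambda_j\boldsymbol a_j$ in $\mathcal S'$. Set $f_n=\sum_{j\le n}\lambda_j\boldsymbol a_j\in L^2(dw)$. Proposition~\ref{atom_in_Hp} and the $p$-triangle inequality for $S$ (which is sublinear) show that $(f_n)$ is Cauchy in $\mathbb H^p_{\mathrm{Dunkl}}$, with $\sup_n\|f_n\|^p\le C\sum|\lambda_j|^p$. So $(f_n)$ defines an element of $H^p_{\mathrm{Dunkl}}$ with norm at most $C\big(\sum|\lambda_j|^p\big)^{1/p}$.

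\textbf{Step 3: identifying the completion with distributions.} We need an injective map $H^p_{\mathrm{Dunkl}}\to\mathcal S'$ compatible with both directions (this is the announced Corollary~\ref{coro:distribution}).
\begin{itemize}
\item By Step 1 and Proposition~\ref{lem:atom_S}, $|\langle g,\varphi\rangle|\le C\|g\|_{\mathbb H^p}\|\varphi\|_{(\mathbf n)}$ for $g\in\mathbb H^p_{\mathrm{Dunkl}}$. Hence Cauchy sequences in $\mathbb H^p$ converge in $\mathcal S'$.
\item Conversely, given a Cauchy sequence $(f_n)$, decompose the differences $f_{n_{k+1}}-f_{n_k}$ atomically via Step 1. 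This shows that the $\mathcal S'$-limit $f$ lies in $H^p_{M\text{-}\mathrm{atom}}$ with $\|f\|_{\mathrm{atom}}\lesssim\lim\|f_n\|$.
\end{itemize}

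\textbf{Main obstacle: injectivity.} We must show that if $f_n\to0$ in $\mathcal S'$, then $\|f_n\|_{\mathbb H^p}\to0$. My plan is to prove that $Q_t*f_n(\mathbf y)\to Q_t*f(\mathbf y)$ pointwise, using that $Q_t(\mathbf y,\cdot)\in\mathcal S$. Then Fatou's lemma applied in the tent space gives
\[
\|S(f_n-f)\|_p\le\liminf_m\|S(f_n-f_m)\|_p,
\]
which is small for $n$ large. This is exactly the point where pointwise convergence in $(t,\mathbf y)$ has to be justified for a limit that is only a tempered distribution, so I expect it to need the most care.
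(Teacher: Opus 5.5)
Your proposal is correct and follows essentially the same route as the paper: the tent-space atomic decomposition pushed through the Calder\'on reproducing formula \eqref{eq:reproduction_1}, Proposition~\ref{atom_in_Hp} for the reverse inclusion, the identification with $\mathcal S'$ via Proposition~\ref{lem:atom_S}, a rapidly Cauchy subsequence to decompose an arbitrary element of the completion, and injectivity via the pointwise limit $Q_t*f_n\to Q_t*f$ (Proposition~\ref{propo:not_zero_limit}). The only differences are that you close the injectivity step with Fatou's lemma rather than the paper's dominating function $G=\sum_n|F_n-F_{n-1}|$ plus dominated convergence (a slightly leaner variant), and that the step you flag as the main obstacle is immediate, since $Q_t(\mathbf y,\cdot)\in\mathcal S(\mathbb R^N)$ makes $Q_t*f_n(\mathbf y)=\langle f_n,Q_t(\mathbf y,\cdot)\rangle\to\langle f,Q_t(\mathbf y,\cdot)\rangle$ just weak-$*$ convergence in $\mathcal S'$.
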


\subsection{Atomic decompositions of  functions from $\mathbb{H}^p_{\mathrm{Dunkl}}$}

Our next step is to establish atomic decompositions for functions in $\mathbb{H}^p_{\mathrm{Dunkl}}$. We will then extend this result to all remaining elements of $H^p_{\mathrm{Dunkl}}$ in Subsection~\ref{sec:atomic_proof}.

\begin{proposition}\label{propo:atomic_decomposition}
Fix $0<p\leq 1$, $M>\mathbf N(2-p)/4p$, $M\in\mathbb Z$. Then there exists a constant $C>0$ such that 
for all  $f\in \mathbb{H}^p_{\mathrm{Dunkl}}$  there exist a sequence $(\lambda_j)_{j \in \mathbb{N}}$ of complex numbers and a sequence $(\boldsymbol{a}_j)_{j \in \mathbb{N}}$ of $(p,2,M,\Delta_k)$-atoms such that 
\begin{equation} 
f(\mathbf{x})=\sum_{j=1}^{\infty} \lambda_j \boldsymbol{a}_j(\mathbf{x}),\quad \sum_{j=1}^{\infty} |\lambda_j|^p\leq C \|Sf\|_{L^p(dw)}^p=C\|f\|_{\mathbb{H}^p_{\mathrm{Dunkl}}}^p,
\end{equation} 
where the series $\sum_{j=1}^{\infty} \lambda_j \boldsymbol{a}_j$ converges in $L^2(dw)$ and $\mathcal{S}'(\mathbb{R}^N)$. 
\end{proposition}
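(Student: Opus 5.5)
The proof follows the Hofmann--Mayboroda / Duong--Li scheme, combining the tent space atomic decomposition with the Calder\'on reproducing formula \eqref{eq:reproduction_1}.

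Let $f\in\mathbb H^p_{\mathrm{Dunkl}}$ and set $F(t,\mathbf x)=Q_t*f(\mathbf x)=t^2\Delta_kH_{t^2}f(\mathbf x)$. Then $F\in T^p_2$ with $\|F\|_{T_2^p}=\|f\|_{\mathbb H^p_{\mathrm{Dunkl}}}$. Since $f\in L^2(dw)$, Lemma~\ref{lem:eta} together with \eqref{T22} gives $F\in T^2_2$. By Theorem~\ref{teo:atomic_decomposition_tent} and Remark~\ref{remarkAtom}, we write $F=\sum_j\lambda_jA_j$ with $\lambda_jA_j=\chi_{S_j}F$, $\operatorname{supp}A_j\subseteq\widehat{B_j}$, and $\sum_j|\lambda_j|^p\le C\|F\|^p_{T_2^p}$. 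The convergence holds in $T^p_2$, in $T^2_2$, and pointwise. We then apply \eqref{eq:reproduction_1}, that is, $f=-\pi_{\phi}\big(\phi_t*F(t,\cdot)\big)$ with $\phi=\Delta_k^M\psi$. We define
\[
\boldsymbol a_j=-\int_0^\infty \phi_t*\phi_t*A_j(t,\cdot)\,\frac{dt}{t},\qquad \boldsymbol b_j=-\int_0^\infty t^{2M}\,\psi_t*\phi_t*A_j(t,\cdot)\,\frac{dt}{t}.
\]
Since $t^{2M}\Delta_k^M\psi_t=\phi_t$, we have $\boldsymbol a_j=\Delta_k^M\boldsymbol b_j$. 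The map $G\mapsto \pi_\phi(\phi_t*G)$ is bounded from $T^2_2$ to $L^2(dw)$, by \eqref{L2toL2}, \eqref{L2backL2} and the boundedness of $G\mapsto\phi_t*G(t,\cdot)$ on $L^2(dw\,dt/t)$ via Plancherel. Hence the $T^2_2$-convergence of $\sum\lambda_jA_j$ yields $f=\sum_j\lambda_j\boldsymbol a_j$ in $L^2(dw)$, and therefore also in $\mathcal S'$. Once the atom property is verified, unconditional $\mathcal S'$-convergence also follows from Proposition~\ref{prop:dist_convergence}.

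It remains to check that each $\boldsymbol a_j$ is, up to a constant, a $(p,2,M,\Delta_k)$-atom associated with a ball comparable to $B_j=B(\mathbf x_j,r_j)$.

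For the support condition, note that $(t,\mathbf y)\in\widehat{B_j}$ forces $t\le r_j$ and $\|\mathbf y-\mathbf x_j\|<r_j-t$. Also $\psi_t$ and $\phi_t$ are supported in $B(0,t)$, and $\psi_t*\phi_t$ is supported in $B(0,2t)$; this follows from Theorem~\ref{teo:support} applied at $\mathbf x=0$, or from the support properties of convolution of radial compactly supported functions. Theorem~\ref{teo:support} then gives that $\mathbf x\mapsto(\psi_t*\phi_t)(\mathbf x,\mathbf y)$ vanishes unless $d(\mathbf x,\mathbf y)\le 2t$. Consequently $\boldsymbol b_j$, and likewise each $\Delta_k^m\boldsymbol b_j$, is supported in $\mathcal O(B(\mathbf x_j,3r_j))$. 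So we take the ball $3B_j$ in \eqref{eq:support}.

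For the size condition, the key fact is that $A_j$ vanishes for $t>r_j$. Writing $(r_j^2\Delta_k)^m\boldsymbol b_j$, each application of $\Delta_k$ turns $t^{2}\Delta_k\psi_t$ into a rescaled function of the same type, at the cost of a factor $t^{-2}$. We obtain
\[
(r_j^2\Delta_k)^m\boldsymbol b_j=-r_j^{2m}\int_0^{r_j}t^{2M-2m}\,(\Delta_k^m\psi)_t*\phi_t*A_j\,\frac{dt}{t}.
\]
We bound this by duality: pair it with $g\in L^2(dw)$ and use Cauchy--Schwarz in $L^2(dw\,dt/t)$, together with the square-function bound \eqref{L2toL2} for $\phi_t$ (or Lemma~\ref{lem:eta}) and the uniform $L^2$ bound for convolution with $(\Delta_k^m\psi)_t$. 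Since $t^{2M-2m}\le r_j^{2M-2m}$ on the support, this gives
\[
\|(r_j^2\Delta_k)^m\boldsymbol b_j\|_{L^2}\le C r_j^{2M}\|A_j\|_{L^2(dw\,dt/t)}\le Cr_j^{2M}w(B_j)^{1/2-1/p}\le C' r_j^{2M}w(3B_j)^{1/2-1/p},
\]
where the last step uses doubling.

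The main obstacle I expect is the support step. It requires the translation support theorem for the non-radial kernel $\psi_t*\phi_t$; this is available because both factors are radial and compactly supported, so their convolution is compactly supported in $B(0,2t)$. It also requires justifying that $\boldsymbol b_j\in\mathcal D(\Delta_k^M)$ and that the $L^2$-valued integrals converge. Both should follow from the same Plancherel computation, since the multiplier $t^{2M}\|\xi\|^{2M}$ is harmless on $t\le r_j$.
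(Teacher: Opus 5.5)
Your proposal is correct and follows the paper's route. You decompose $F=Q_t*f$ into $T^p_2$-atoms via Remark~\ref{remarkAtom} and push each tent atom through the Calder\'on reproducing formula \eqref{eq:reproduction_1} to get $\boldsymbol a_j=\Delta_k^M\boldsymbol b_j$; the support and size conditions come from Theorem~\ref{teo:support} and from duality with Lemma~\ref{lem:eta} using $t\le r_j$ on $\widehat{B_j}$, and $L^2(dw)$, hence $\mathcal S'$, convergence follows from $T^2_2$-convergence and \eqref{L2backL2}. The differences are cosmetic: your tighter ball $3B_j$ versus the paper's $8B$, and your explicit minus sign from the reproducing formula.
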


\begin{proof}
    The proof follows the approach in~\cite{Hofman} (see also~\cite{Duong},~\cite{ADzH}). Set $F(t,\mathbf{x})=t^2(\Delta_k h_{t^2})*f(\mathbf{x})$. Since $f \in \mathbb{H}^p_{\mathrm{Dunkl}}$ (so, in particular $f \in L^2(dw)$), we have $F \in T_2^2 \cap T^p_2$. Applying the atomic decomposition in $T_2^p$ (see Theorem~\ref{teo:atomic_decomposition_tent}), we see that there are non-negative real numbers $\lambda_j$ and $T_2^p$-atoms $A_j(t,\mathbf{x})$ such that 
    \begin{equation}\label{eq:tent_series}
        F(t,\mathbf{x})=\sum_{j=1}^{\infty} \lambda_j A_j(t,\mathbf{x}), \quad 
        \sum_{j=1}^{\infty}|\lambda_j|^p\leq C\|F\|_{T_2^p}^p=C\|Sf\|_{L^p(dw)}^p.
    \end{equation}
    Moreover, due to Remark~\ref{remarkAtom} and the fact that $F \in T_2^2 \cap T^p_2$, the series in~\eqref{eq:tent_series} can be chosen in such a way that the convergence is in $T^p_2$, $T_2^2$ (consequently, in $L^2(dw \,dt/t)$), and pointwise. Let $\phi,\psi$ be as in~\eqref{eq:phi_form}. Let $A_j$ be an atom of the tent space $T^p_2$, which appears in~\eqref{eq:tent_series}. Assume that it is associated with a tent $\widehat{B}$, $B=B(\mathbf{x}_0,r)$. We define
    \begin{align*}
      \boldsymbol{a}_j(\mathbf{x})&:=\Pi_{\phi*\phi} (A_j)(\mathbf{x})=\int_0^\infty\int_{\mathbb{R}^N} \phi_t^{*2} (\mathbf{x},\mathbf{y})A_j(t,\mathbf{y})\, dw(\mathbf{y})\frac{dt}{t}\\
      &= \int_0^\infty\int_{\mathbb{R}^N} t^{2M} \Delta_k^{M} (\psi_t*\phi_t) (\mathbf{x},\mathbf{y})A_j(t,\mathbf{y})\, dw(\mathbf{y})\frac{dt}{t},\\[1ex]
      \boldsymbol{b}_j(\mathbf{x})&:=\int_0^\infty\int_{\mathbb{R}^N} t^{2M} (\psi_t*\phi_t) (\mathbf{x},\mathbf{y})A_j(t,\mathbf{y})\, dw(\mathbf{y})\frac{dt}{t}.
\end{align*}
     To check whether $\boldsymbol{a}_j$ is a constant multiple of a $(p,2,M,\Delta_k)$-atom associated with $B(\mathbf{x}_0,8r)$, we will analyze the function $\boldsymbol{b}_j$.

      Let us verify condition~\eqref{eq:cancellation}. Since $A_j \in L^2(dw \, dt/t)$ and it is compactly supported, we have
      \begin{align*}
          \Delta_k^{M} \boldsymbol{b}_j(\mathbf{x})=\Delta_k^{M} \int_0^\infty\int_{\mathbb{R}^N} (\psi_t*\phi_t) (\mathbf{x},\mathbf{y})t^{2M} A_j(t,\mathbf{y})\, dw(\mathbf{y})\frac{dt}{t}=\boldsymbol{a}_j(\mathbf{x}).
      \end{align*}

      Let us verify condition~\eqref{eq:support}. Since $\supp A_j \subseteq \widehat B$, by the definition of $\widehat B$ (see~\eqref{eq:tent_def}), we have
      \begin{equation}\label{eq:supp_restrict}
          \int_0^\infty\int_{\mathbb{R}^N} t^{2M} (\psi_t*\phi_t) (\mathbf{x},\mathbf{y})A_j(t,\mathbf{y})\, dw(\mathbf{y})\frac{dt}{t}=\int_0^{r}\int_{B(\mathbf{x}_0,r)} t^{2M}  (\psi_t*\phi_t) (\mathbf{x},\mathbf{y})A_j(t,\mathbf{y})\, dw(\mathbf{y})\frac{dt}{t}
      \end{equation}
    Since $\supp \psi,\supp \phi \subseteq B(0,1)$, we have $\supp (\psi_t*\phi_t) \subseteq B(0,2t) \subseteq B(0,2r)$. Then, it follows from the formula~\eqref{eq:translation-radial} (see also Theorem~\ref{teo:support}) that $\supp (\psi_t*\phi_t)(\cdot,\mathbf{y}) \subseteq \mathcal{O}(B(\mathbf{y},2r))$. Consequently, by the formula~\eqref{eq:supp_restrict} we see that $\boldsymbol{b}_j(\mathbf{x})=0$ for $\mathbf{x} \not\in \mathcal{O}(B(\mathbf{x}_0, 8r))$. Therefore, $\supp \boldsymbol{b}_j \subseteq \mathcal{O}(B(\mathbf{x}_0,8r))$.

    Finally, let us verify the size condition~\eqref{eq:size}. Let $n \in \{0,1,\ldots,M\}$. We have
    \begin{align*}
        &\|(r^2\Delta_k)^n\boldsymbol{b}_j\|_{L^2(dw)}\\&=\sup_{g \in L^2(dw), \, \|g\|_{L^2(dw)}=1}\left|\int_{\mathbb{R}^N}g(\mathbf{x})\int_0^\infty\int_{\mathbb{R}^N} (r^2\Delta_k)^nt^{2M}(\psi_t*\phi_t)(\mathbf{x},\mathbf{y})A_j(t,\mathbf{y})\,dw(\mathbf{y})\,\frac{dt}{t}\,dw(\mathbf{x})\right|.
    \end{align*}
    Let us denote $\eta(\mathbf{x})=(\Delta_k^n \psi)*\phi$. Clearly, $\eta \in \mathcal{S}(\mathbb{R}^N)$. By the scaling property of the Dunkl Laplacian we have $\Delta_k^n(\psi_t*\phi_t)=t^{-2n}\eta_t$, hence,  using the symmetry of the generalized convolution (see~\eqref{eq:conv_translation}) we obtain
    \begin{align*}
        \|(r^2\Delta_k)^n\boldsymbol{b}_j\|_{L^2(dw)}=\sup_{g \in L^2(dw), \, \|g\|_{L^2(dw)}=1}r^{2n}\left|\int_0^{\infty}\int_{\mathbb{R}^N}\eta_t*g(\mathbf{y})t^{2M-2n}A_j(t,\mathbf{y})\,dw(\mathbf{y})\frac{dt}{t}\right|.
    \end{align*}
    Applying the Cauchy--Schwarz inequality and Lemma~\ref{lem:eta}, we get
    \begin{align*}
        &\|(r^2\Delta_k)^n\boldsymbol{b}_j\|_{L^2(dw)} \leq r^{2n}\left(\int_0^{\infty}\int_{\mathbb{R}^N}|\eta_t*g(\mathbf{y})|^2\,dw(\mathbf{y})\frac{dt}{t}\right)^{\frac{1}{2}} \left(\int_0^{\infty}\int_{\mathbb{R}^N}t^{2(2M-2n)}|A_j(t,\mathbf{y})|^2\,dw(\mathbf{y})\frac{dt}{t}\right)^{\frac{1}{2}}\\&\leq Cr^{2n}\left(\int_0^{\infty}\int_{\mathbb{R}^N}t^{2(2M-2n)}|A_j(t,\mathbf{y})|^2\,dw(\mathbf{y})\frac{dt}{t}\right)^{\frac{1}{2}}.
    \end{align*}
    Finally, by the fact that $\supp A_j \subseteq \widehat B$ and the definition of the atom of $T_2^p$ (see Definition~\ref{def:tent_atom_def}),
    \begin{align*}
        &r^{2n}\left(\int_0^{\infty}\int_{\mathbb{R}^N}t^{2(2M-2n)}|A_j(t,\mathbf{y})|^2\,dw(\mathbf{y})\frac{dt}{t}\right)^{\frac{1}{2}}=r^{2n}\left(\int_0^{r}\int_{\mathbb{R}^N}t^{2(2M-2n)}|A_j(t,\mathbf{y})|^2\,dw(\mathbf{y})\frac{dt}{t}\right)^{\frac{1}{2}}\\&\leq Cr^{2M}\left(\int_0^{r}\int_{\mathbb{R}^N}|A_j(t,\mathbf{y})|^2\,dw(\mathbf{y})\frac{dt}{t}\right)^{\frac{1}{2}} \leq Cr^{2M}w(B)^{\frac{1}{2}-\frac{1}{p}},
    \end{align*}
    so the size condition is verified. Since $\boldsymbol{a}_j = \Delta_k^{M} \boldsymbol{b}_j$, and $\boldsymbol{b}_j$ satisfies the required support and size conditions, we conclude that $\boldsymbol{a}_j$ is a constant multiple of a $(p,2,M,\Delta_k)$-atom. Finally, let us recall that the convergence of the series~\eqref{eq:tent_series} is (in particular) in $T_2^2$. Therefore, due to~\eqref{L2backL2}, we have 
\begin{equation*}
        f(\mathbf{x}) =\Pi_{\phi * \phi} (F)(\mathbf{x})= \sum_{j=1}^{\infty} \lambda_j \Pi_{\phi*\phi} (A_j)(\mathbf{x})=\sum_{j=1}^{\infty} \lambda_j \boldsymbol{a}_j(\mathbf{x}),
\end{equation*}
where the convergence is in $L^2(dw)$. Hence, the convergence is in the sense of tempered distributions as well. The decomposition for $f\in\mathbb{H}^p_{\mathrm{Dunkl}}$ is obtained with 
\begin{equation*}
    \sum_{j=1}^{\infty} |\lambda_j|^p\leq C \|S f\|^p_{L^p(dw)}.
\end{equation*}
Since each $\boldsymbol{a}_j$ is a constant multiple of a $(p,2,M,\Delta_k)$-atom, i.e., $\boldsymbol{a}_j = C \widetilde{\boldsymbol{a}}_j$ for a $(p,2,M,\Delta_k)$-atom $\widetilde{\boldsymbol{a}}_j$, we set $\widetilde{\lambda}_j = C \lambda_j$. The exact decomposition takes the form $f = \sum_{j=1}^{\infty} \widetilde{\lambda}_j \widetilde{\boldsymbol{a}}_j$, and the new coefficients satisfy the required bound $ \sum_{j=1}^{\infty} |\widetilde{\lambda}_j|^p  \leq C \|S f\|^p_{L^p(dw)} $, which  completes the proof.
\end{proof}

\subsection{Identification of the elements of $H^p_{\mathrm{Dunkl}}$ with tempered distributions}

\begin{corollary}\label{distr-bound}
For $0<p\leq 1$, let $M>\mathbf{N}(2-p)/4p$ and  $\mathbf{n}>\max (2M,\mathbf{N})$. Then     there is a constant $C>0$ such that for all $f\in\mathbb{H}^p_{\mathrm{Dunkl}}$ we have 
\begin{equation}\label{eq:quantitive}
    \Big| \int_{\mathbb{R}^N} f(\mathbf{x})\varphi (\mathbf{x})\, dw(\mathbf{x})\Big|\leq C \| Sf\|_{L^p(dw)} \|\varphi\|_{(\mathbf{n})}=C \|f\|_{\mathbb{H}^p_{\mathrm{Dunkl}}}\|\varphi \|_{(\mathbf{n})}.
\end{equation}
\end{corollary}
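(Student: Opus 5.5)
This follows by combining Proposition~\ref{propo:atomic_decomposition} with Proposition~\ref{lem:atom_S}. Given $f\in\mathbb{H}^p_{\mathrm{Dunkl}}$, Proposition~\ref{propo:atomic_decomposition} provides $(p,2,M,\Delta_k)$-atoms $\boldsymbol{a}_j$ and coefficients $\lambda_j$ such that $f=\sum_j\lambda_j\boldsymbol{a}_j$, with convergence in $L^2(dw)$, and $\sum_j|\lambda_j|^p\leq C\|Sf\|_{L^p(dw)}^p$.

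Fix $\varphi\in\mathcal S(\mathbb{R}^N)$. By Lemma~\ref{lem:L_2_Schwartz} with $M_1=0$ we have $\varphi\in L^2(dw)$. Since the series converges in $L^2(dw)$, pairing with $\varphi$ commutes with the summation:
\[
\int_{\mathbb{R}^N} f\varphi\,dw=\sum_{j=1}^\infty\lambda_j\int_{\mathbb{R}^N}\boldsymbol{a}_j\varphi\,dw .
\]

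Each term is controlled by Proposition~\ref{lem:atom_S}, which gives $\big|\int\boldsymbol{a}_j\varphi\,dw\big|\leq C\|\varphi\|_{(\mathbf n)}$ uniformly over atoms, for $\mathbf n>\max(2M,\mathbf N)$. This is exactly the hypothesis on $\mathbf n$ in the corollary. Hence
\[
\Big|\int_{\mathbb{R}^N} f\varphi\,dw\Big|\leq C\|\varphi\|_{(\mathbf n)}\sum_j|\lambda_j|\leq C\|\varphi\|_{(\mathbf n)}\Big(\sum_j|\lambda_j|^p\Big)^{1/p}\leq C\|Sf\|_{L^p(dw)}\|\varphi\|_{(\mathbf n)}.
\]
The middle step uses the embedding $\ell^p\subset\ell^1$ for $0<p\leq1$. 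The final equality in the statement is the definition of the quasi-norm on $\mathbb{H}^p_{\mathrm{Dunkl}}$.

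There is no serious obstacle here. The one point that needs care is justifying the interchange of the pairing with the sum, and this is handled by the $L^2(dw)$ convergence of the decomposition together with $\varphi\in L^2(dw)$. The uniformity of the constants comes directly from the two cited propositions.
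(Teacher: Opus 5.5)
Your proposal is correct and is essentially the paper's proof: the atomic decomposition from Proposition~\ref{propo:atomic_decomposition}, the uniform atom bound from Proposition~\ref{lem:atom_S}, and the inequality $\bigl(\sum_j|\lambda_j|\bigr)^p\leq\sum_j|\lambda_j|^p$ for $0<p\leq 1$. Your justification for exchanging the pairing with the sum ($L^2(dw)$ convergence of the series together with $\varphi\in L^2(dw)$) is a detail the paper leaves implicit.
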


\begin{proof}
    From Proposition~\ref{propo:atomic_decomposition}, $f$ admits a representation in the form $f=\sum_{j=1}^{\infty}\lambda_j \boldsymbol{a}_j$, where $\lambda_j \in \mathbb{C}$ and $\boldsymbol{a}_j$ are $(p,2,M,\Delta_k)$-atoms, the series converges in $L^2(dw)$ and $\mathcal{S}'(\mathbb{R}^N)$, and $\sum_{j=1}^{\infty}|\lambda_j|^p \leq C\|f\|_{\mathbb{H}^p_{\mathrm{Dunkl}}}^p$. Consequently,
    \begin{align*}
         \Big| \int_{\mathbb{R}^N} f(\mathbf{x})\varphi (\mathbf{x})\, dw(\mathbf{x}) \Big|\leq \sum_{j=1}^{\infty}|\lambda_j|\Big| \int_{\mathbb{R}^N} \boldsymbol{a}_j(\mathbf{x})\varphi (\mathbf{x})\, dw(\mathbf{x}) \Big|.
    \end{align*}
    By Lemma~\ref{lem:atom_S}, we get
    \begin{align*}
        \sum_{j=1}^{\infty}|\lambda_j|\Big| \int_{\mathbb{R}^N} \boldsymbol{a}_j(\mathbf{x})\varphi (\mathbf{x})\, dw(\mathbf{x}) \Big| \leq C\|\varphi\|_{(\mathbf{n})}\sum_{j=1}^{\infty}|\lambda_j|\leq C\|\varphi\|_{(\mathbf{n})}\Big(\sum_{j}|\lambda_j|^p\Big)^{1/p}.
    \end{align*} 
\end{proof}

\begin{corollary}\label{coro:distribution}
    Assume that $(f_n)_{n \in \mathbb{N}}$, $f_n\in\mathbb{H}^p_{\mathrm{Dunkl}}$, is a Cauchy sequence in the $H_{\mathrm{Dunkl}}^p$-norm.  Then $f_n$ converges in $\mathcal S'(\mathbb{R}^N)$ to a distribution $f\in\mathcal S'(\mathbb{R}^N)$. 
    
\end{corollary}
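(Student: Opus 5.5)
The plan is to deduce the statement directly from Corollary~\ref{distr-bound}, which bounds the pairing of an element of $\mathbb{H}^p_{\mathrm{Dunkl}}$ with a Schwartz function by the $\mathbb{H}^p_{\mathrm{Dunkl}}$-quasinorm times a Schwartz seminorm. Fix $M>\mathbf{N}(2-p)/4p$ and an integer $\mathbf{n}>\max(2M,\mathbf{N})$.

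Let $(f_n)$ be a Cauchy sequence in $\mathbb{H}^p_{\mathrm{Dunkl}}$. Since $\mathbb{H}^p_{\mathrm{Dunkl}}$ is a linear space, $f_n-f_m\in\mathbb{H}^p_{\mathrm{Dunkl}}$. Applying \eqref{eq:quantitive} to $f_n-f_m$ gives, for every $\varphi\in\mathcal S(\mathbb{R}^N)$,
\begin{equation*}
\Big|\int_{\mathbb{R}^N} (f_n-f_m)(\mathbf{x})\varphi(\mathbf{x})\,dw(\mathbf{x})\Big|\leq C\|f_n-f_m\|_{\mathbb{H}^p_{\mathrm{Dunkl}}}\|\varphi\|_{(\mathbf{n})}.
\end{equation*}
Hence the scalar sequence $\langle f_n,\varphi\rangle$ is Cauchy for each $\varphi$, and we may define $\langle f,\varphi\rangle:=\lim_{n\to\infty}\langle f_n,\varphi\rangle$, which is clearly linear in $\varphi$.

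To see that $f$ is continuous on $\mathcal S(\mathbb{R}^N)$, apply \eqref{eq:quantitive} to $f_n$ itself:
\begin{equation*}
|\langle f_n,\varphi\rangle|\leq C\|f_n\|_{\mathbb{H}^p_{\mathrm{Dunkl}}}\|\varphi\|_{(\mathbf{n})}.
\end{equation*}
Cauchy sequences are bounded, since $\big|\|f_n\|-\|f_m\|\big|\le\|f_n-f_m\|$ as noted after the definition of $H^p_{\mathrm{Dunkl}}$. Passing to the limit therefore gives $|\langle f,\varphi\rangle|\le C\sup_n\|f_n\|_{\mathbb{H}^p_{\mathrm{Dunkl}}}\|\varphi\|_{(\mathbf{n})}$. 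As $\|\cdot\|_{(\mathbf{n})}$ is one of the seminorms defining the topology of $\mathcal S$, this shows $f\in\mathcal S'(\mathbb{R}^N)$, and $f_n\to f$ weak-$*$ in $\mathcal S'$. In fact the convergence is uniform on bounded subsets of $\mathcal S$, by the displayed Cauchy estimate.

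It is also worth checking that equivalent Cauchy sequences (those with $\|f_n-g_n\|_{\mathbb{H}^p_{\mathrm{Dunkl}}}\to0$) produce the same limit. This follows from the same inequality applied to $f_n-g_n$, so the identification of elements of $H^p_{\mathrm{Dunkl}}$ with tempered distributions is well defined.

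There is no real obstacle here: all of the substance is in Corollary~\ref{distr-bound}, which rests on the atomic decomposition of Proposition~\ref{propo:atomic_decomposition} together with the uniform atom–Schwartz pairing bound of Proposition~\ref{lem:atom_S}. The only points needing care are linearity of $\mathbb{H}^p_{\mathrm{Dunkl}}$ and the boundedness of Cauchy sequences.
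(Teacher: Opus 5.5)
Your proposal is correct and is the paper's argument: the paper simply states that convergence in $\mathcal S'(\mathbb{R}^N)$ follows from \eqref{eq:quantitive}, and you have written out the Cauchy-pairing, limit and continuity steps that this leaves implicit. One small fix: for $p<1$ the reverse triangle inequality for the quasi-norm itself is not available, only $\|f+g\|_{\mathbb{H}^p_{\mathrm{Dunkl}}}^p\leq\|f\|_{\mathbb{H}^p_{\mathrm{Dunkl}}}^p+\|g\|_{\mathbb{H}^p_{\mathrm{Dunkl}}}^p$ (by sublinearity of $S$), so derive the boundedness of $(f_n)$ from this $p$-triangle inequality instead, with the same conclusion.
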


\begin{proof}
    The convergence in $\mathcal S'(\mathbb{R}^N)$ of the sequence $f_n$ to a distribution $f$ 
     is a consequence of the quantitative estimate~\eqref{eq:quantitive} from Corollary~\ref{distr-bound}.
\end{proof}

As a consequence of Corollary~\ref{coro:distribution}, the mapping from $H^p_{\mathrm{Dunkl}}$ (defined as equivalence classes of Cauchy sequences $(f_n)$ in  $ H^p_{\mathrm{Dunkl}}$ equipped with the norm $\|[(f_n)_{n \in \mathbb{N}}]\|_{H^p_{\rm Dunkl}}=\lim_{n \to \infty} \|f_n\|_{\mathbb H^p_{\rm Dunkl}}$) to $\mathcal{S}'(\mathbb{R}^N)$ is well-defined. To show that the elements of $H^p_{\mathrm{Dunkl}}$ can be uniquely identified with those in $\mathcal{S}'(\mathbb{R}^N)$, we must prove that this mapping is also injective. This is included in Proposition~\ref{propo:not_zero_limit}.

Now, let $(f_n)_{n \in \mathbb{N}}$ be a Cauchy sequence in $\mathbb{H}^p_{\mathrm{Dunkl}}$ and let  $f \in \mathcal{S}'(\mathbb{R}^N)$ be its distributional limit. We define
\begin{equation}\label{eq:pointwise_Qt}
    Q_t*f(\mathbf{x})=\lim_{n \to \infty}\int_{\mathbb{R}^N}f_n(\mathbf{y})Q_t(\mathbf{x},\mathbf{y})\,dw(\mathbf{y})=\lim_{n \to \infty}Q_t*f_n(\mathbf{x}).
\end{equation}
The quantitative estimate~\eqref{eq:quantitive} together with the properties of $Q_t$ (see Subsection~\ref{sec:Qt}) guarantees that $Q_t*f(\mathbf{x})$ is well-defined for all $(t,\mathbf{x}) \in (0,\infty) \times \mathbb{R}^N$. 

    \begin{proposition}\label{propo:not_zero_limit}
        Let $(f_n)_{n \in \mathbb{N}}$ be a Cauchy sequence in $\mathbb{H}^p_{\mathrm{Dunkl}}$. Let  $F(t,\mathbf{x})=Q_t*f(\mathbf x)$, where $Q_t*f(\mathbf{x})$ is defined in~\eqref{eq:pointwise_Qt}. Then $F\in T_2^p$, and $\|F\|_{T_2^p}=\lim_{n\to\infty} \| f_n\|_{\mathbb{H}^p_{\mathrm{Dunkl}}}$.
    \end{proposition}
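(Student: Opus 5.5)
Set $F_n(t,\mathbf x)=Q_t*f_n(\mathbf x)$. By the remark following \eqref{T22}, each $F_n$ lies in $T_2^p$ with $\|F_n\|_{T_2^p}=\|f_n\|_{\mathbb H^p_{\mathrm{Dunkl}}}$. The argument rests on three facts. First, $\mathcal A$ is sublinear. Second, $\|G\|_{T_2^p}^p$ satisfies the $p$-triangle inequality. Third, for $m\ne n$,
$$\|F_n-F_m\|_{T_2^p}=\|S(f_n-f_m)\|_{L^p(dw)}=\|f_n-f_m\|_{\mathbb H^p_{\mathrm{Dunkl}}}\to 0,$$
since $F_n-F_m=Q_t*(f_n-f_m)$ by linearity. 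Hence $(F_n)$ is Cauchy in $T_2^p$.

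Next I would show that $T_2^p$ is complete and that $F_n\to F$ in $T_2^p$ with $F=\lim_n F_n$ pointwise. The pointwise convergence $F_n(t,\mathbf x)\to F(t,\mathbf x)$ at every $(t,\mathbf x)$ holds by the very definition \eqref{eq:pointwise_Qt}. This rests on Corollary \ref{distr-bound}, applied with $\varphi=Q_t(\mathbf x,\cdot)\in\mathcal S(\mathbb R^N)$, which shows that $(Q_t*f_n(\mathbf x))_n$ is a Cauchy sequence of numbers.

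\textbf{Key step: Fatou's lemma twice.} Fix $m$. For every $\mathbf x$, Fatou's lemma in the $(t,\mathbf y)$ integral defining $\mathcal A$ gives
$$\mathcal A(F-F_m)(\mathbf x)\le\liminf_{n\to\infty}\mathcal A(F_n-F_m)(\mathbf x).$$
A second application of Fatou, now in $L^p(dw)$, gives
$$\|F-F_m\|_{T_2^p}^p\le \liminf_n\|F_n-F_m\|_{T_2^p}^p\le \sup_{n\ge m}\|f_n-f_m\|^p_{\mathbb H^p_{\mathrm{Dunkl}}},$$
which tends to $0$ as $m\to\infty$. In particular $F-F_m\in T_2^p$, and therefore $F\in T_2^p$ by the $p$-triangle inequality.

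Finally, the $p$-triangle inequality yields
$$\big|\|F\|_{T_2^p}^p-\|F_m\|_{T_2^p}^p\big|\le\|F-F_m\|_{T_2^p}^p\to0.$$
Hence $\|F\|_{T_2^p}=\lim_m\|F_m\|_{T_2^p}=\lim_m\|f_m\|_{\mathbb H^p_{\mathrm{Dunkl}}}$, and this limit exists by the remark after the definition of $H^p_{\mathrm{Dunkl}}$.

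The only step needing care is measurability and pointwise convergence of $F$, so that Fatou applies. Each $F_n$ is continuous in $(t,\mathbf x)$, and $F$ is their pointwise limit, hence measurable. Everything else is routine.
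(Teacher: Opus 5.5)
Your proof is correct, but it takes a different and somewhat shorter route than the paper's. The paper runs the classical Riesz--Fischer completeness argument:
\begin{enumerate}
\item it passes to a subsequence with $\|f_n-f_m\|_{\mathbb{H}^p_{\mathrm{Dunkl}}}\le C2^{-n}$;
\item it builds the majorant $G=\sum_n|F_n-F_{n-1}|$ and shows $G\in T_2^p$ via Minkowski's inequality and the $p$-triangle inequality;
\item it deduces $|F_n|\le G$ and $|F|\le G$;
\item it applies dominated convergence twice, once in the cone integral and once in $L^p(dw)$, to get $\|F_n-F\|_{T_2^p}\to0$ along the subsequence.
\end{enumerate}

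You instead apply Fatou's lemma twice, which amounts to the lower semicontinuity of $\|\cdot\|_{T_2^p}$ under pointwise convergence. This gives $\|F-F_m\|^p_{T_2^p}\le\sup_{n\ge m}\|f_n-f_m\|^p_{\mathbb{H}^p_{\mathrm{Dunkl}}}$ directly. Your route needs neither a subsequence nor a majorant, and it identifies the $T_2^p$-limit with the pointwise limit $F$ at once.

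Your sentence about proving completeness of $T_2^p$ is superfluous. Completeness alone would not identify the limit with $F$, and you never actually use it.

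In exchange, the paper's route provides a single dominating function $G\in T_2^p$ for the whole subsequence, so $F\in T_2^p$ follows immediately from $\mathcal{A}F\le\mathcal{A}G$. Both arguments rest on the same inputs: the everywhere pointwise convergence from \eqref{eq:pointwise_Qt}, and the identity $\|Q_t*g\|_{T_2^p}=\|g\|_{\mathbb{H}^p_{\mathrm{Dunkl}}}$ for $g\in L^2(dw)$.
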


\begin{proof}
     Now, let us show that the function $(t,\mathbf{x}) \to Q_t*f(\mathbf{x})$ belongs to $T_2^p$. For that purpose, we will imitate a standard proof of completeness for function spaces. First, without loss of generality (we can pass to a subsequence if necessary), we may assume
    \begin{equation}\label{eq:Cauchy_ver}
        \| f_n-f_m\|_{\mathbb{H}^p_{\mathrm{Dunkl}}}\leq C_1 2^{-n}\quad \text{ for } 1\leq n\leq m,
    \end{equation}
    and $f_0 \equiv 0$. Let $F_n(t,\mathbf{x})=Q_t*f_n(\mathbf{x})$. Let us define an auxiliary function
    \begin{equation*}
        G(t,\mathbf{x})=\sum_{n=1}^{\infty} |F_n(t,\mathbf{x})-F_{n-1}(t,\mathbf{x})|.
    \end{equation*}
    Let us check if $G \in T_2^p$. First, observe that by the Minkowski inequality, for all $\mathbf{x} \in \mathbb{R}^N$ we have
    \begin{align*}
        \mathcal{A}G(\mathbf{x})&=\left(\int_0^{\infty}\int_{\|\mathbf{x}-\mathbf{y}\|<t}\left(\sum_{n=1}^{\infty}|F_n(t,\mathbf{y})-F_{n-1}(t,\mathbf{y})|\right)^2\,\frac{dw(\mathbf{y})}{w(B(\mathbf{x},t))}\,\frac{dt}{t}\right)^{\frac{1}{2}}\\&\leq \sum_{n=1}^{\infty} \left(\int_0^{\infty}\int_{\|\mathbf{x}-\mathbf{y}\|<t}|F_n(t,\mathbf{y})-F_{n-1}(t,\mathbf{y})|^2\,\frac{dw(\mathbf{y})}{w(B(\mathbf{x},t))}\,\frac{dt}{t}\right)^{\frac{1}{2}}=\sum_{n=1}^{\infty}\mathcal{A}(F_n-F_{n-1})(\mathbf{x}).
    \end{align*}
    Therefore, using the numerical inequality $\left(\sum_{n=1}^{\infty}|\boldsymbol{a}_n|\right)^p \leq \sum_{n=1}^{\infty}|\boldsymbol{a}_n|^p$  and~\eqref{eq:Cauchy_ver} we obtain
    \begin{align*}
        \|\mathcal{A}G\|_{L^p}^{p} &\leq \int_{\mathbb{R}^N}\Big|\sum_{n=1}^{\infty}\mathcal{A}(F_{n}-F_{n-1})(\mathbf{x})\Big|^p\,dw(\mathbf{x}) \leq \sum_{n=1}^{\infty}\int_{\mathbb{R}^N}\left|\mathcal{A}(F_n-F_{n-1})(\mathbf{x})\right|^p\,dw(\mathbf{x})\\& \leq \sum_{n=1}^{\infty}\|f_n-f_{n-1}\|_{\mathbb{H}^p_{\mathrm{Dunkl}}}^p \leq C\sum_{n=1}^{\infty}2^{-(n-1)p} \leq C.
    \end{align*}
    Next, by the fact that $f_0 \equiv 0$ and the telescopic sum $F_n=\sum_{j=1}^{n}(F_j-F_{j-1})$, 
    \begin{equation}\label{eq:f_n_by_Qt}
        |F_n(t,\mathbf{x})| \leq G(t,\mathbf{x}) \text{ for all }t>0\text{ and }\mathbf{x} \in \mathbb{R}^N.
     \end{equation}
    Moreover, passing to the limit as $n \to \infty$ in~\eqref{eq:f_n_by_Qt} and using the pointwise convergence from~\eqref{eq:pointwise_Qt}, we immediately obtain
\begin{equation*}
    |F(t,\mathbf{x})|=|Q_t * f(\mathbf{x})| \leq G(t, \mathbf{x}) \quad \text{for all } t>0 \text{ and } \mathbf{x} \in \mathbb{R}^N.
\end{equation*}
     Consequently,
     \begin{equation}\label{eq:AF_n}
     \begin{split}
         \mathcal{A}(F_n-F)(\mathbf{x})^2 &=\int_0^{\infty}\int_{\|\mathbf{x}-\mathbf{y}\|<t}|(F_n-F)(t,\mathbf{y})|^2\frac{dw(\mathbf{y})}{w(B(\mathbf{x},t))}\,\frac{dt}{t}\\&\leq 4\int_0^{\infty}\int_{\|\mathbf{x}-\mathbf{y}\|<t}|G(t,\mathbf{y})|^2\,\frac{dw(\mathbf{y})}{w(B(\mathbf{x},t))}\,\frac{dt}{t}=\mathcal{A}(2G)(\mathbf{x})^2,
        \end{split}
     \end{equation}
     so, 
     \begin{align*}
         \int_{\mathbb{R}^N}\mathcal{A}(F_n-F)(\mathbf{x})^p \,dw(\mathbf{x}) \leq \int_{\mathbb{R}^N}\mathcal{A}(2G)(\mathbf{x})^p\,dw(\mathbf{x}).
     \end{align*}
    Since $F_n \to F$ pointwise and $|F_n - F| \leq 2G$, we can apply the Lebesgue dominated convergence theorem twice. First, we apply it to the inner cone integral in~\eqref{eq:AF_n}, to deduce that $\mathcal{A}(F_n-F)(\mathbf{x}) \to 0$ almost everywhere. Then, we apply it to the outer integral over $\mathbb{R}^N$, using the upper majorant $\mathcal{A}(2G)^p \in L^1(dw)$, to obtain
\begin{equation}\label{eq:Cauchy_almost_final}
    \lim_{n \to \infty}\|F_n-F\|_{T_2^p}=0,
\end{equation}
provided that the sequence satisfies the rapid decay condition~\eqref{eq:Cauchy_ver}. Consequently, 
    \begin{align*}
        \|F\|_{T_2^p}=\lim_{n \to \infty}\|Q_\cdot*f_n\|_{T_2^p}=\lim_{n \to \infty}\|f_n\|_{\mathbb{H}^p_{\mathrm{Dunkl}}}.
    \end{align*}
\end{proof}

\subsection{Proof of  atomic characterizations of $H^p_{\mathrm{Dunkl}}$ (Theorem~\ref{teo:H_p_coincides})}\label{sec:atomic_proof}

\begin{proof}[Proof of the inclusion $H^p_{\mathrm{Dunkl}}\subseteq H^p_{M\mathrm{-atom}}$ of Theorem~\ref{teo:H_p_coincides}]

Let $f \in H^p_{\mathrm{Dunkl}}$. Since $H^p_{\mathrm{Dunkl}}$ is defined as the completion of $\mathbb{H}^p_{\mathrm{Dunkl}}$, there exists a Cauchy sequence $(f_n)_{n\in\mathbb N}$ in $\mathbb{H}^p_{\mathrm{Dunkl}}$ that represents $f$. By Corollary~\ref{coro:distribution}, we can uniquely identify $f$ with the distributional limit of $(f_n)_{n\in\mathbb N}$ in $\mathcal S'(\mathbb{R}^N)$. Let $\mathbf M_0=\lim_{n \to \infty} \|f_n\|_{\mathbb{H}^p_{\mathrm{Dunkl}}}=\| Sf\|_{L^p(dw)}$ (see Proposition~\ref{propo:not_zero_limit}). We may assume that $\mathbf M_0>0$. Let $(f_{n_\ell})_{\ell \in \mathbb{N}}$ be a subsequence of $(f_n)_{n \in \mathbb{N}}$ such that $\|f_{n_1}\|_{\mathbb{H}^p_{\mathrm{Dunkl}}}\leq  2 \mathbf M_0$ and 
$$\|f_{n_\ell}-f_{n_{\ell+1}}\|_{\mathbb{H}^p_{\mathrm{Dunkl}}}^p\leq 2^{-\ell}\mathbf M_0^p .$$ 
Since $f_{n_{\ell+1}}-f_{n_{\ell}}\in\mathbb{H}^p_{\mathrm{Dunkl}}$, by Proposition~\ref{propo:atomic_decomposition} there exist scalars $\lambda_{j,\ell}\in\mathbb C$ and $(p,2,M,\Delta_k)$-atoms $\boldsymbol{a}_{j,\ell}$ such that
\begin{equation*}
 f_{n_{\ell+1}}-f_{n_{\ell}}=\sum_{j=1}^{\infty} \lambda_{j,\ell}\boldsymbol{a}_{j,\ell},   
\end{equation*}
where convergence is in $\mathcal{S}'(\mathbb{R}^N)$ and $L^2(dw)$, and
\begin{equation*}
    \sum_{j=1}^{\infty} |\lambda_{j,\ell} |^p\leq C\|f_{n_{\ell+1}}-f_{n_{\ell}}\|_{\mathbb{H}^p_{\mathrm{Dunkl}}}^p\leq C 2^{-\ell} \mathbf M_0^p.
\end{equation*}
 The function $f_{n_1}$ admits the atomic decomposition as well, that is, 
 $$ f_{n_1}=\sum_{j=1}^{\infty} \lambda_j\boldsymbol{a}_j, \quad \sum_{j=1}^{\infty}|\lambda_j|^p\leq C\|f_{n_1}\|_{\mathbb{H}^p_{\mathrm{Dunkl}}}^p\leq C (2\mathbf M_0)^p.$$
Consequently, 
\begin{equation}\label{eq:dist_into_atoms}
    f=\sum_{j=1}^{\infty} \lambda_j \boldsymbol{a}_j +\sum_{\ell=1}^\infty \sum_{j=1}^{\infty} \lambda_{j,\ell} \boldsymbol{a}_{j,\ell} 
\end{equation}
with
\begin{equation}\label{eq:lambdas_bound}
    \sum_{j=1}^{\infty}|\lambda_j|^p+\sum_{\ell=1}^\infty \sum_{j=1}^{\infty} |\lambda_{j,\ell}|^p\leq C'\mathbf M_0^p.
\end{equation}

By~\eqref{eq:lambdas_bound} and Corollary~\ref{distr-bound}, the series~\eqref{eq:dist_into_atoms} converges in $\mathcal{S}'(\mathbb{R}^N)$. This proves  
\[\|f\|_{H^p_{M\mathrm{-atom}}}^p \leq C'\mathbf M_0^p=C'\|f\|_{H^p_{\mathrm{Dunkl}}}^p.\]

\end{proof}

\begin{proof}[Proof of the inclusion $H^p_{M\mathrm{-atom}}\subseteq H^p_{\mathrm{Dunkl}}$ of Theorem~\ref{teo:H_p_coincides}]

{Fix $0<p\leq 1$ and  an integer $M>\mathbf N(2-p)/4p$. 
Let  $f\in \mathcal S'(\mathbb{R}^N)$ be such that 
\begin{equation}\label{eq:form_of_distribution}
    f=\sum_{j=1}^{\infty} \lambda_j \boldsymbol{a}_j, \quad \sum_{j=1}^{\infty} |\lambda_j|^p<\infty
\end{equation}
where $\boldsymbol a_j$ are $(p,2,M,\Delta_k)$-atoms. 
Set $f_n=\sum_{j=1}^n \lambda_j \boldsymbol{a}_j$.}
Then, by virtue of Proposition \ref{atom_in_Hp}, $f_n \in \mathbb{H}_{\mathrm{Dunkl}}^p$ and 
\begin{equation}
    \|f_n\|_{\mathbb{H}^p_{\mathrm{Dunkl}}}^p\leq C \sum_{j=1}^n |\lambda_j|^p \leq C \sum_{j=1}^\infty |\lambda_j|^p,
\end{equation}
\begin{equation}
     \|f_n-f_m\|_{\mathbb{H}^p_{\mathrm{Dunkl}}}^p= \|\sum_{j=n+1}^m  \lambda_j \boldsymbol{a}_j\|_{\mathbb{H}^p_{\mathrm{Dunkl}}}^p\leq \sum_{j=n+1}^m |\lambda_j|^p \|\boldsymbol{a}_j\|^p_{\mathbb{H}^p_{\mathrm{Dunkl}}}  \leq C \sum_{j=n+1}^m |\lambda_j|^p.
\end{equation}
Therefore, by~\eqref{eq:form_of_distribution}, the partial sums $f_n$ form a Cauchy sequence in $\mathbb{H}^p_{\mathrm{Dunkl}}$. Since the elements of $H^p_{\mathrm{Dunkl}}$ are uniquely identified with distributions (see Proposition~\ref{propo:not_zero_limit}), the Cauchy sequence $(f_n)_{n \in \mathbb{N}}$ represents the original distribution $f$. Consequently, passing to the limit yields the desired bound $\|f\|_{H^p_{\mathrm{Dunkl}}}^p \leq C\sum_{j=1}^\infty |\lambda_j|^p$.

\end{proof}

\section{Proof of H\"ormander's multiplier theorem}\label{sec:Hormander}

\subsection{Multipliers and estimates for the associated integral kernels}

Let $m$ be a function defined on $\mathbb{R}^N$. In this section,  we
assume that there exists $s>\frac{\mathbf{N}}{p}$ such that the multiplier $m$
satisfies~\eqref{eq:assumption}.  Let $\phi$ be a radial $C^\infty$-function on $\mathbb{R}^N$ supported in the annulus $\{\xi\in\mathbb{R}^N: \frac{1}{2}\leq \| \xi\|\leq 2  \}$ such that 
$1=\sum_{\ell \in \mathbb{Z}}\phi(2^{-\ell}\xi)$ for all $\xi \in \mathbb{R}^N \setminus \{0\}$. For $\ell \in \mathbb{Z}$ and $M>0$, we define two multipliers and their scaled versions:
\begin{equation}\label{eq:m_n}
     \widetilde{m}_\ell(\xi)=m(2^\ell \xi)\phi(\xi), \quad \widetilde{n}_\ell(\xi)=m(2^\ell\xi)\phi(\xi)2^{2M\ell}\|\xi\|^{2M},
\end{equation}
\begin{equation*}
     m_{\ell}(\xi)=m(\xi)\phi(2^{-\ell}\xi)=\widetilde{m}_{\ell}(2^{-\ell}\xi),
\end{equation*}
\begin{equation*}
     n_{\ell}(\xi)=m(\xi)\phi(2^{-\ell}\xi)\|\xi\|^{2M}=\widetilde{n}_{\ell}(2^{-\ell}\xi).
\end{equation*}
Let us denote their inverse Dunkl-Fourier transforms by
\begin{equation}\label{eq:tilde_K}
     K_\ell(\mathbf{x})=\mathcal F^{-1} ( m_\ell) (\mathbf{x}), \ \ \widetilde{K}_\ell(\mathbf{x})=\mathcal F^{-1} (\widetilde{m}_\ell) (\mathbf{x})=2^{-\mathbf{N}\ell} K_\ell (2^{-\ell}\mathbf{x}),
\end{equation}
and
\begin{equation}\label{eq:tilde_L}
     L_\ell (\mathbf{x})=\mathcal F^{-1} ( n_\ell)(\mathbf{x}), \ \ \widetilde{L}_\ell (\mathbf{x})=\mathcal F^{-1} (\widetilde{n}_\ell)(\mathbf{x})=2^{-\mathbf{N}\ell} L_\ell(2^{-\ell}\mathbf{x}).
\end{equation}
Let $\mathcal{T}_m$ and $\mathcal{T}_{\ell}$, where $\ell \in \mathbb{Z}$, denote the Dunkl multiplier operators associated with $m$ and ${m}_\ell$, respectively. 
Clearly, for $f\in L^2(dw)$ one has
\begin{equation}\label{eq:kernel_for_cutoff} 
{\mathcal T_mf=\sum_{\ell\in\mathbb Z} \mathcal T_\ell f, \quad }
\mathcal{T}_\ell f(\mathbf x)={K}_\ell*f(\mathbf x).
\end{equation}

\begin{lemma}
    Let $\delta'>\delta>0$. There is a constant $C>0$ such that for all $\ell \in \mathbb{Z}$ we have
    \begin{equation}\label{eq:Hormander_K}
        \left\|\widetilde{K}_{\ell}(\mathbf{x})(1+\|\mathbf{x}\|)^{\mathbf{N}/2+\delta}\right\|_{L^1(dw(\mathbf{x}))} \leq C\|\widetilde{m}_\ell\|_{W_2^{\mathbf{N}+\delta'}},
    \end{equation}
    \begin{equation}\label{eq:Hormander_L}
        \left\|\widetilde{L}_{\ell}(\mathbf{x})(1+\|\mathbf{x}\|)^{\mathbf{N}/2+\delta}\right\|_{L^1(dw(\mathbf{x}))} \leq C\|\widetilde{n}_\ell\|_{W_2^{\mathbf{N}+\delta'}}.
    \end{equation}
\end{lemma}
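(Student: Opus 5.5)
The plan is the Cauchy--Schwarz argument against a weight that is integrable for $dw$, followed by a weighted Plancherel estimate on the Dunkl transform side. I treat \eqref{eq:Hormander_K}; \eqref{eq:Hormander_L} is identical, with $\widetilde m_\ell$ replaced by $\widetilde n_\ell$. Note that $\widetilde n_\ell$ is again a smooth compactly supported factor times $m(2^\ell\cdot)$, and the constant is stated in terms of its own Sobolev norm, so nothing changes.

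\emph{Step 1 (Cauchy--Schwarz).} Write
\[
\int |\widetilde K_\ell(\mathbf x)|(1+\|\mathbf x\|)^{\mathbf N/2+\delta}\,dw(\mathbf x)\le \Big(\int (1+\|\mathbf x\|)^{-\mathbf N-2(\delta'-\delta)}dw\Big)^{1/2}\Big(\int |\widetilde K_\ell(\mathbf x)|^2(1+\|\mathbf x\|)^{2\mathbf N+2\delta'}dw\Big)^{1/2}.
\]
The first factor is finite because $w(B(0,R))\sim R^{\mathbf N}$ and $\delta'>\delta$. It therefore suffices to show
\[
\|(1+\|\mathbf x\|)^{\sigma}\mathcal F^{-1}g\|_{L^2(dw)}\le C\|g\|_{W^\sigma_2}, \qquad \sigma=\mathbf N+\delta',
\]
for $g$ supported in the fixed annulus $\{1/2\le\|\xi\|\le 2\}$.

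\emph{Step 2 (weighted Plancherel for integer $\sigma$).} For integer $\sigma$ the weight is controlled by $\sum_{|\boldsymbol\beta|\le\sigma}|\mathbf x^{\boldsymbol\beta}|$. On the transform side, multiplication by $x_j$ corresponds (up to a constant) to the Dunkl operator $T_j$ acting in $\xi$. So by \eqref{eq:Plancherel},
\[
\|\mathbf x^{\boldsymbol\beta}\mathcal F^{-1}g\|_{L^2(dw)}=\|T^{\boldsymbol\beta}g\|_{L^2(dw)}.
\]
Since $g$ is supported in the annulus, I expect the difference parts of $T_j$ to be bounded on $L^2(dw)$ of functions supported near the annulus, with losses controlled by derivatives. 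The difference quotients $\frac{g(\xi)-g(\sigma_\alpha\xi)}{\langle\alpha,\xi\rangle}$ are bounded by $\sup|\nabla g|$ along segments near the hyperplane. This is a classical fact, but it needs a Hardy-type inequality for $L^2$ derivatives.

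It also requires converting $\|\partial^{\boldsymbol\gamma}g\|_{L^2(dw)}$ into $\|g\|_{W^{|\boldsymbol\gamma|}_2}$ with respect to $d\xi$. This is fine on the annulus, because $w$ is bounded there, so $L^2(dw)\lesssim L^2(d\xi)$.

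\emph{Step 3 (non-integer $\sigma$ and the main obstacle).} For non-integer $\sigma$ I would interpolate, complex interpolation in $\sigma$, between the two neighbouring integers. This works because both sides form interpolation scales: the weighted $L^2$ spaces and the Sobolev spaces $W^\sigma_2$ restricted to functions supported in the annulus. The multiplication by a fixed cutoff keeps the support condition.

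The main obstacle is Step 2: controlling $T^{\boldsymbol\beta}g$ in $L^2(dw)$ by Sobolev norms, given the singular denominators $\langle\alpha,\xi\rangle$ where the annulus meets reflecting hyperplanes.

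An alternative I would try first, to sidestep this, uses the fact that the lemma allows the loss $\mathbf N$ versus $\mathbf N/2$. A cruder route then suffices: extend $g$ to $\mathbb R^N$ and use the Sobolev embedding into $C^{\lfloor\sigma\rfloor-N/2-}$. Even so, the difference quotients are controlled by $C^1$ norms, and $\sigma=\mathbf N+\delta'$ leaves enough room ($\mathbf N\ge N$) for the iterated $T$'s in $L^2$ near the hyperplanes. This holds because each $T_j$ costs one classical derivative in $L^2$, by the one-dimensional Hardy inequality applied to $\frac{h(\xi)-h(\sigma_\alpha\xi)}{\langle\alpha,\xi\rangle}$ written as an average of $\partial_\alpha h$ along the segment.
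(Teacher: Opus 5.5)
Your proposal is correct, but it reaches the key weighted estimate by a different route from the paper.

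\emph{What the paper does.} The paper also starts with Cauchy--Schwarz, but it splits the slack. It picks $\gamma>0$ with $\delta+\gamma<\delta'$ and uses $\|(1+\|\mathbf x\|)^{-\mathbf N/2-\gamma}\|_{L^2(dw)}<\infty$. It then cites Proposition 5.3 of [ABDH], which, as quoted, gives $\|\widetilde K_\ell(1+\|\mathbf x\|)^{\beta}\|_{L^2(dw)}\le C\|\widetilde m_\ell\|_{W^\alpha_2}$ only for $\alpha>\beta$. That strict inequality is why the intermediate $\gamma$ is needed.

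\emph{What you do.} You prove the weighted Plancherel inequality yourself, in lossless form ($\alpha=\beta=\sigma$). That is why you can put all of $\delta'-\delta$ into the integrable weight. The mechanism is sound:
\begin{enumerate}[(i)]
\item $\mathcal F(x_jf)=iT_j\mathcal Ff$ turns polynomial weights into Dunkl operators on the transform side. Justify this first for smooth $g$, then pass to general $g$ by mollification.
\item Because the annulus is $G$-invariant, $T^{\boldsymbol\beta}g$ stays supported in it, and $w$ is bounded there.
\item Each $T_j$ maps $W^{k+1}_2$ boundedly into $W^k_2$. Indeed, $\frac{h(\xi)-h(\sigma_\alpha\xi)}{\langle\alpha,\xi\rangle}=\int_0^1\langle\nabla h(\xi-t\langle\xi,\alpha\rangle\alpha),\alpha\rangle\,dt$, and the linear map $\xi\mapsto\xi-t\langle\xi,\alpha\rangle\alpha$ has Jacobian $|1-2t|$. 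Minkowski's inequality then costs only the factor $\int_0^1|1-2t|^{-1/2}\,dt<\infty$. The same formula, differentiated, handles the higher Sobolev norms. This is your Hardy-inequality step, and it is what should be stated as the lemma.
\item Interpolation is clean once you apply it to $g\mapsto\mathcal F^{-1}(\chi g)$, with $\chi$ a fixed radial cutoff. The operator is then defined on all of $W^k_2$, and you interpolate between the spaces $L^2((1+\|\mathbf x\|)^{2k}dw)$ by Stein--Weiss.
\end{enumerate}
The paper's route buys brevity by leaning on a published estimate. Yours is self-contained, slightly sharper (no $\varepsilon$-loss), and makes the mechanism visible.

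\emph{One correction.} Discard the fallback via Sobolev embedding into $C^{\lfloor\sigma\rfloor-N/2-}$: there is no room for it. After Cauchy--Schwarz you need an $L^2(dw)$ weight of order strictly larger than $\mathbf N+\delta$, against only $\mathbf N+\delta'$ derivatives of $\widetilde m_\ell$. The margin is therefore $\delta'-\delta$, not $\mathbf N/2$. Any passage through sup norms costs about $N/2$ derivatives and breaks the count; the same happens with the pointwise bound $|\mathbf x^{\boldsymbol\beta}\mathcal F^{-1}g|\lesssim\|T^{\boldsymbol\beta}g\|_{L^1(dw)}$. Only the lossless (or nearly lossless) $L^2$ bound for the difference quotients closes the argument.
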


\begin{proof}
    We will prove only~\eqref{eq:Hormander_K}; the proof of~\eqref{eq:Hormander_L} follows exactly the same pattern. Let $\gamma>0$ be such that $\delta+\gamma<\delta'$. It follows from the form of the measure $dw$ (see~\eqref{eq:measure}) that $$\|(1+\|\mathbf{x}\|)^{-\mathbf{N}/2-\gamma}\|_{L^2(dw(\mathbf{x}))} \leq C_\gamma<\infty $$ for any $\gamma>0$. Therefore, by the Cauchy--Schwarz inequality,
    \begin{equation}\label{eq:K_CS}
        \left\|\widetilde{K}_{\ell}(\mathbf{x})(1+\|\mathbf{x}\|)^{\mathbf{N}/2+\delta}\right\|_{L^1(dw(\mathbf{x}))} \leq C\left\|\widetilde{K}_{\ell}(\mathbf{x})(1+\|\mathbf{x}\|)^{\mathbf{N}+\delta+\gamma}\right\|_{L^2(dw(\mathbf{x}))}. 
    \end{equation}
    Then Proposition 5.3 of~\cite{ABDH} asserts that for any real numbers $\alpha >\beta >0$ there is a constant $C=C_{\alpha,\beta}$ such that 
\begin{equation}\label{eq:insertion}
\| \widetilde{K}_\ell (\mathbf x)(1+\| \mathbf x\|)^\beta \|_{L^2(dw(\mathbf x))} \leq C \| \widetilde{m}_{\ell} \|_{W^\alpha_2} = C \| \widehat{\widetilde{m}}_{\ell}(\mathbf x) (1+\| \mathbf x\|)^{\alpha} \|_{L^2(d\mathbf x)},\end{equation}
where $\widehat{\widetilde{m}}_{\ell}$ denotes the classical Fourier transform of $\widetilde{m}_{\ell}$. Therefore,~\eqref{eq:Hormander_K} follows from~\eqref{eq:K_CS} and~\eqref{eq:insertion} applied with $\beta=\mathbf{N}+\delta+\gamma$ and $\alpha=\mathbf{N}+\delta'$.
\end{proof}

\begin{corollary}
    Let $\delta'>\delta>0$ and let $\psi$ be as in Theorem~\ref{teo:teo_main}. There is a constant $C>0$ such that for all $\ell \in \mathbb{Z}$, we have
    \begin{equation}\label{eq:Hormander_K_ver2}
        \left\|\widetilde{K}_{\ell}(\mathbf{x})(1+\|\mathbf{x}\|)^{\mathbf{N}/2+\delta}\right\|_{L^1(dw(\mathbf{x}))} \leq C \sup_{t>0}\|\psi (\cdot)m(t \cdot )\|_{W^{\mathbf{N}+\delta'}_2},
    \end{equation}
    \begin{equation}\label{eq:Hormander_L_ver2}
        \left\|\widetilde{L}_{\ell}(\mathbf{x})(1+\|\mathbf{x}\|)^{\mathbf{N}/2+\delta}\right\|_{L^1(dw(\mathbf{x}))} \leq C 2^{2M\ell} \sup_{t>0}\|\psi (\cdot)m(t \cdot )\|_{W^{\mathbf{N}+\delta'}_2}.
    \end{equation}
\end{corollary}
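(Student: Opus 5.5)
The corollary follows from the preceding lemma once we show that $\|\widetilde m_\ell\|_{W_2^{\mathbf N+\delta'}}$ and $2^{-2M\ell}\|\widetilde n_\ell\|_{W_2^{\mathbf N+\delta'}}$ are bounded, uniformly in $\ell$, by a constant times
\[
M_0':=\sup_{t>0}\|\psi(\cdot)m(t\cdot)\|_{W_2^{\mathbf N+\delta'}}.
\]
Here $\widetilde m_\ell(\xi)=m(2^\ell\xi)\phi(\xi)$, where $\phi$ is the partition-of-unity function supported in $\{1/2\le\|\xi\|\le 2\}$. Also
\[
2^{-2M\ell}\widetilde n_\ell(\xi)=m(2^\ell\xi)\phi(\xi)\|\xi\|^{2M},
\]
and $\phi(\xi)\|\xi\|^{2M}$ is again a smooth function supported in the same annulus. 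So both estimates reduce to one claim: for every $\chi\in C_c^\infty(\{1/2\le\|\xi\|\le 2\})$ there is $C_\chi$ with $\|\chi(\cdot)m(t\cdot)\|_{W_2^\sigma}\le C_\chi \sup_{u>0}\|\psi(\cdot)m(u\cdot)\|_{W_2^\sigma}$ for all $t>0$, where $\sigma=\mathbf N+\delta'$.

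To prove the claim, we first cover the annulus by dilates of the region where $\psi$ is non-vanishing. Because $\psi$ is radial, nonzero and continuous, with support away from the origin, there is an interval $[a,b]\subset(0,\infty)$ with $|\psi(\xi)|\ge c>0$ for $a\le\|\xi\|\le b$. Choose finitely many dilation factors $\rho_1,\dots,\rho_J>0$ such that the shells $\{\rho_i a<\|\xi\|<\rho_i b\}$ cover $\{1/2\le\|\xi\|\le 2\}$. Take a smooth partition of unity $\chi=\sum_i\chi_i$ with $\supp\chi_i$ inside the $i$-th shell. Then on the support of $\chi_i$ we can write
\[
\chi_i(\xi)m(t\xi)=\frac{\chi_i(\xi)}{\psi(\xi/\rho_i)}\,\psi(\xi/\rho_i)\,m\big(t\rho_i\cdot(\xi/\rho_i)\big).
\]
The factor $g_i:=\chi_i/\psi(\cdot/\rho_i)$ belongs to $C_c^\infty$ because $\psi(\cdot/\rho_i)$ does not vanish on $\supp\chi_i$.

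Next we use two standard facts about the classical Sobolev space $W_2^\sigma$. Multiplication by a fixed function in $C_c^\infty$ (indeed by any Schwartz function) is bounded on $W_2^\sigma$; this follows from Peetre's inequality $(1+|\xi|)^\sigma\le C(1+|\xi-\eta|)^\sigma(1+|\eta|)^{|\sigma|}$ together with Young's inequality on the Fourier side. A fixed dilation $F\mapsto F(\cdot/\rho)$ is bounded on $W_2^\sigma$ with a constant depending only on $\rho$. Combining these gives
\[
\|\chi_i(\cdot)m(t\cdot)\|_{W_2^\sigma}\le C_i\|\psi(\cdot)m(t\rho_i\cdot)\|_{W_2^\sigma}\le C_i M_0'.
\]
Summing over the finitely many $i$ proves the claim.

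Applying the claim with $\chi=\phi$ and with $\chi=\phi\|\cdot\|^{2M}$, and then inserting the result into \eqref{eq:Hormander_K} and \eqref{eq:Hormander_L}, yields \eqref{eq:Hormander_K_ver2} and \eqref{eq:Hormander_L_ver2}. The only step that needs any care is the covering and division by $\psi$. It works precisely because $\psi$ is non-zero, radial and supported away from the origin; radiality is what guarantees that a single shell of non-vanishing exists, and finitely many of its dilates then cover the annulus.
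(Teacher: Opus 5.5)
Your proposal is correct and follows the paper's route. Like the paper, you reduce to the preceding lemma together with the fact that the Hörmander quantity does not depend on the choice of cutoff, applied with $\phi$ and with $\phi\|\xi\|^{2M}$ after pulling the factor $2^{2M\ell}$ out of $\widetilde n_\ell$. The only difference is that the paper cites this cutoff-independence as a classical fact (Remark 1.4 of \cite{DzHe-JFA}). You instead give its standard proof: cover the annulus by finitely many dilated shells on which $\psi$ does not vanish, divide by $\psi(\cdot/\rho_i)$, and use that multiplication by $C_c^\infty$ functions and fixed dilations are bounded on $W_2^\sigma$.
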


\begin{proof}
     One can prove that if $\eta \in C_c^\infty(\mathbb{R}^N\setminus \{0\})$ is radial, then 
     there is a constant $C_{\psi,\eta}>0$ such that
\begin{equation}\label{eq:any_function}
\sup_{t>0}\|\eta (\cdot)m(t \cdot )\|_{W^{\mathbf{N}+\delta'}_2} \leq C_{\psi,\eta} \sup_{t>0}\|\psi (\cdot)m(t \cdot )\|_{W^{\mathbf{N}+\delta'}_2}
\end{equation}
     (see Remark 1.4 of~\cite{DzHe-JFA}; note that the claim is a classical result, independent of the Dunkl setting). Therefore,~\eqref{eq:Hormander_K_ver2} and~\eqref{eq:Hormander_L_ver2} are consequences of~\eqref{eq:Hormander_K} and~\eqref{eq:Hormander_L} (and the definition of $\widetilde{m}_\ell$ and $\widetilde{n}_{\ell}$, see~\eqref{eq:m_n}) if we apply~\eqref{eq:any_function} with $\eta(\xi)=\phi(\xi)$ and $\eta(\xi)=\phi(\xi)\|\xi\|^{2M}$ respectively.
\end{proof}

\begin{corollary}
    Let $\delta'>\delta>0$ and let $\psi$ be as in Theorem~\ref{teo:teo_main}. There is a constant $C>0$ such that for all $\ell \in \mathbb{Z}$ and $R>0$, we have
    \begin{equation}\label{eq:K_1}
        \|{K}_\ell\|_{L^1(dw)} \leq C\sup_{t>0}\|\psi(\cdot)m(t\cdot)\|_{W^{\mathbf{N}+\delta'}_2},
    \end{equation}
    \begin{equation}\label{eq:K_2}
        \left\|{K}_\ell(\mathbf{x})\|\mathbf{x}\|^{\mathbf{N}/2+\delta}\right\|_{L^1(dw)} \leq C2^{-\ell(\mathbf{N}/2+\delta)}\sup_{t>0}\|\psi(\cdot)m(t\cdot)\|_{W^{\mathbf{N}+\delta'}_2},
    \end{equation}
    \begin{equation}\label{eq:K_3}
        \int_{\|\mathbf{x}\|>R}|{K}_\ell(\mathbf{x})|\,dw(\mathbf{x}) \leq CR^{-\mathbf{N}/2-\delta}2^{-\ell(\mathbf{N}/2+\delta)}\sup_{t>0}\|\psi(\cdot)m(t\cdot)\|_{W^{\mathbf{N}+\delta'}_2},
    \end{equation}
     \begin{equation}\label{eq:L_1}
        \|{L}_\ell\|_{L^1(dw)} \leq C2^{2M\ell}\sup_{t>0}\|\psi(\cdot)m(t\cdot)\|_{W^{\mathbf{N}+\delta'}_2},
    \end{equation}
    \begin{equation}\label{eq:L_2}
        \left\|{L}_\ell(\mathbf{x})\|\mathbf{x}\|^{\mathbf{N}/2+\delta}\right\|_{L^1(dw)} \leq C2^{2M\ell}2^{-\ell(\mathbf{N}/2+\delta)}\sup_{t>0}\|\psi(\cdot)m(t\cdot)\|_{W^{\mathbf{N}+\delta'}_2},
    \end{equation}
    \begin{equation}\label{eq:L_3}
        \int_{\|\mathbf{x}\|>R}|{L}_\ell(\mathbf{x})|\,dw(\mathbf{x}) \leq C2^{2M\ell}R^{-\mathbf{N}/2-\delta}2^{-\ell(\mathbf{N}/2+\delta)}\sup_{t>0}\|\psi(\cdot)m(t\cdot)\|_{W^{\mathbf{N}+\delta'}_2}.
    \end{equation}
\end{corollary}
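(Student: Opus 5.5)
These bounds follow from the preceding corollary by Dunkl dilation. The scaling relations \eqref{eq:tilde_K} and \eqref{eq:tilde_L} give $K_\ell(\mathbf x)=2^{\mathbf N\ell}\widetilde K_\ell(2^\ell\mathbf x)$ and $L_\ell(\mathbf x)=2^{\mathbf N\ell}\widetilde L_\ell(2^\ell\mathbf x)$. By the homogeneity of the measure, $dw(2^{-\ell}\mathbf x)=2^{-\mathbf N\ell}\,dw(\mathbf x)$, which is the same identity that underlies \eqref{eq:t_ball}. Throughout, write $\mathbf M:=\sup_{t>0}\|\psi(\cdot)m(t\cdot)\|_{W^{\mathbf N+\delta'}_2}$.

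For \eqref{eq:K_1} we change variables $\mathbf x=2^{-\ell}\mathbf u$. This gives $\|K_\ell\|_{L^1(dw)}=\|\widetilde K_\ell\|_{L^1(dw)}$. Since $(1+\|\mathbf u\|)^{\mathbf N/2+\delta}\ge 1$, the right-hand side is bounded by \eqref{eq:Hormander_K_ver2}.

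For \eqref{eq:K_2} the same substitution gives
\[
\int\|\mathbf x\|^{\mathbf N/2+\delta}|K_\ell(\mathbf x)|\,dw(\mathbf x)=2^{-\ell(\mathbf N/2+\delta)}\int\|\mathbf u\|^{\mathbf N/2+\delta}|\widetilde K_\ell(\mathbf u)|\,dw(\mathbf u).
\]
We then use $\|\mathbf u\|\le 1+\|\mathbf u\|$ and apply \eqref{eq:Hormander_K_ver2} again.

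Estimate \eqref{eq:K_3} is Chebyshev's inequality. On $\{\|\mathbf x\|>R\}$ we have $1\le (\|\mathbf x\|/R)^{\mathbf N/2+\delta}$. Hence
\[
\int_{\|\mathbf x\|>R}|K_\ell|\,dw\le R^{-\mathbf N/2-\delta}\big\|K_\ell(\mathbf x)\|\mathbf x\|^{\mathbf N/2+\delta}\big\|_{L^1(dw)},
\]
and \eqref{eq:K_2} finishes it.

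The bounds \eqref{eq:L_1}–\eqref{eq:L_3} follow verbatim with $L_\ell,\widetilde L_\ell$ in place of $K_\ell,\widetilde K_\ell$. We use \eqref{eq:Hormander_L_ver2} instead of \eqref{eq:Hormander_K_ver2}, and this carries the extra factor $2^{2M\ell}$ through unchanged.

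The only point needing care is the scaling identity $\widetilde K_\ell(\mathbf x)=2^{-\mathbf N\ell}K_\ell(2^{-\ell}\mathbf x)$. It follows from the dilation rule for the Dunkl transform, $\mathcal F^{-1}(g(2^{-\ell}\cdot))(\mathbf x)=2^{\mathbf N\ell}\mathcal F^{-1}g(2^\ell\mathbf x)$. This rule in turn comes from $E(\lambda\mathbf x,\mathbf y)=E(\mathbf x,\lambda\mathbf y)$ and the homogeneity of $w$. The paper already records this identity in \eqref{eq:tilde_K}, so no real obstacle remains.
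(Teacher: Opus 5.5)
Your proposal is correct and is essentially the paper's proof. The scaling relations \eqref{eq:tilde_K}, \eqref{eq:tilde_L} together with $dw(2^{-\ell}\mathbf u)=2^{-\mathbf N\ell}\,dw(\mathbf u)$ reduce \eqref{eq:K_1}, \eqref{eq:K_2} and their $L_\ell$ analogues to \eqref{eq:Hormander_K_ver2} and \eqref{eq:Hormander_L_ver2}, and \eqref{eq:K_3}, \eqref{eq:L_3} then follow from \eqref{eq:K_2}, \eqref{eq:L_2} by the Chebyshev-type bound, as in the paper (your check of the Dunkl dilation rule is a correct detail the paper leaves implicit).
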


\begin{proof}
    It is enough to prove~\eqref{eq:K_1},~\eqref{eq:K_2}, and~\eqref{eq:K_3}; the proof of~\eqref{eq:L_1},~\eqref{eq:L_2}, and~\eqref{eq:L_3} is the same (it is just a matter of an extra factor $2^{2M\ell}$, which distinguishes~\eqref{eq:Hormander_K_ver2} from~\eqref{eq:Hormander_L_ver2}). Now, let us note that~\eqref{eq:K_1} and~\eqref{eq:K_2} are simple consequences of scaling (see the definitions of $\widetilde{K}_\ell$ and ${K}_\ell$) and~\eqref{eq:Hormander_K_ver2}. In order to justify~\eqref{eq:K_3}, observe that
    \begin{equation*}
        \int_{\|\mathbf{x}\|>R}|{K}_\ell(\mathbf{x})|\,dw(\mathbf{x}) \leq  \int_{\|\mathbf{x}\|>R}|{K}_\ell(\mathbf{x})|\frac{\|\mathbf{x}\|^{\mathbf{N}/2+\delta}}{R^{\mathbf{N}/2+\delta}}\,dw(\mathbf{x}) \leq R^{-\mathbf{N}/2-\delta}\left\|{K}_\ell(\mathbf{x})\|\mathbf{x}\|^{\mathbf{N}/2+\delta}\right\|_{L^1(dw)},
    \end{equation*}
    so~\eqref{eq:K_3} follows from~\eqref{eq:K_2}.
\end{proof}

\subsection{Proof of H\"ormander's multiplier theorem}
\begin{proof}[Proof of Theorem~\ref{teo:teo_main}] {Let $0<p\leq 1$.  Suppose that $m$ satisfies \eqref{eq:assumption} with a certain $s>\mathbf N/p$.  Let $\delta>0$ be such that $\mathbf N/p<\mathbf N+\delta<s$. Fix an integer $M$ such that  $M>\mathbf N(2-p)/4p$ and  $2M>\mathbf N/2+\delta$. 
    Consider  a $(p,2,2M,\Delta_k)$-atom $\boldsymbol a$. Then there exist a ball $B=B(\mathbf{x}_0,r)$ and a function $\boldsymbol b\in\mathcal D(\Delta_k^{2M})$ such that 
    \begin{equation}
        \label{eq:support2M} \text{supp}\, \boldsymbol b\subseteq \mathcal O(B),
    \end{equation}
    \begin{equation}
        \label{eq:size2M} \|(r^2\Delta_k)^n \boldsymbol b\|_{L^2(dw)}\leq r^{4M} w(B)^{\frac{1}{2}-\frac{1}{p}}, \quad n=0,1,\dots,2M. 
    \end{equation}We will prove that there is a constant $C>0$ independent of $\boldsymbol{a}(\cdot)$ such that 
    $(M_0 C)^{-1}\mathcal{T}_m\boldsymbol{a}$ is a $(p,2,M,\Delta_k,\varepsilon)$-molecule associated with $B$, where }
    \begin{equation*}
        \varepsilon=\frac{1}{2}\Big(\delta-\mathbf{N}\Big(\frac{1}{p}-1\Big)\Big)>0.
    \end{equation*}
    For this purpose, we have to verify two conditions~\eqref{numitem:1}  and~\eqref{numitem:2}  of Definition~\ref{def:molecule}. 
    
    First, let us verify~\eqref{numitem:1}. 
    {Since $m$ is a bounded function, $\boldsymbol b':=\mathcal T_m\Delta_k^M \boldsymbol b\in \mathcal D(\Delta_k^M)$ (see~\eqref{eq:domain}) and } 
    \begin{equation*}
        \mathcal{T}_m\boldsymbol{a}=\mathcal{T}_m (\Delta_k^{2M}\boldsymbol{b})=\Delta_k^{M}(\mathcal{T}_m (\Delta_k^{M}\boldsymbol{b}))=\Delta_k^{M}\boldsymbol{b}'.
    \end{equation*}
    Therefore, $\mathcal{T}_m\boldsymbol{a}$ is of the form $\Delta_k^{M}\boldsymbol{b}'$, which proves~\eqref{numitem:1}.

    We now check~\eqref{numitem:2}. For this purpose, we have to verify the condition~\eqref{eq:molecule_condition} for $\boldsymbol{b}'$; that is, we have to find a constant $C>0$ independent of $\boldsymbol{a}$ such that for all $n=0,1,\ldots,M$ and $j =0,1,2,\ldots$ we have
    \begin{equation}\label{eq:to_verify}
        \|(r^2\Delta_k)^n(\boldsymbol{b}')\|_{L^2(U_j(B))}=\|(r^2\Delta_k)^n(\mathcal{T}_m(\Delta_k^M \boldsymbol{b}))\|_{L^2(U_j(B))} \leq CM_0 2^{-\varepsilon j} r^{2M} w(2^j B)^{\frac{1}{2}-\frac{1}{p}}.
    \end{equation}
    First, let us prove~\eqref{eq:to_verify} for $0\leq j\leq 4$. Using the fact that $\mathcal{T}_m$ is bounded on $L^2(dw)$, and, as a multiplier operator, it commutes with $\Delta_k$, we get
    \begin{align*}
       \|(r^2\Delta_k)^n\boldsymbol b'\|_{L^2(U_j(B))}&=   \|(r^2\Delta_k)^n(\mathcal{T}_m(\Delta_k^M \boldsymbol{b}))\|_{L^2(U_j(B))} =\|\mathcal{T}_m((r^2\Delta_k)^n(\Delta_k^M \boldsymbol{b}))\|_{L^2(U_j(B))} \\&\leq \|\mathcal{T}_m\|_{L^2(dw) \to L^2(dw)}\|(r^2\Delta_k)^n(\Delta_k^M \boldsymbol{b})\|_{L^2(dw)}\leq M_0 r^{2M}w(B)^{\frac{1}{2}-\frac{1}{p}}.
    \end{align*}
   {Let $ K_\ell,  L_\ell\in L^1(dw)\cap L^2(dw)$ be as in \eqref{eq:tilde_K}, \eqref{eq:tilde_L}.} In order to prove~\eqref{eq:to_verify} for $j\geq 5$, we split it as follows: 
    \begin{align*}
        (r^2\Delta_k)^n\boldsymbol b'&=(r^2\Delta_k)^n(\mathcal{T}_m(\Delta_k^M \boldsymbol{b}))=\sum_{\ell \in \mathbb{Z}}(r^2\Delta_k)^n(\mathcal{T}_\ell(\Delta_k^M \boldsymbol{b}))\\&=\sum_{2^\ell r \geq 1}{K}_\ell *((r^2 \Delta_k)^n\Delta_k^M \boldsymbol{b})+\sum_{2^\ell r<1}(\Delta_k^M {K}_\ell)*((r^2 \Delta_k)^n \boldsymbol{b}).
    \end{align*}
    We will estimate each term separately.

    \noindent\textbf{Case 1:} $2^{\ell}r \geq 1$. Let us split
    \begin{equation*}
        {K}_\ell(\mathbf{x})={K}_\ell(\mathbf{x}) \chi_{[0,2^jr]}(\|\mathbf{x}\|)+{K}_\ell(\mathbf{x}) \chi_{(2^jr,\infty)}(\|\mathbf{x}\|)={K}_\ell'(\mathbf{x})+{K}''_\ell(\mathbf{x}).
    \end{equation*}
    {By \eqref{eq:support2M}, $(r^2 \Delta_k)^n\Delta_k^M \boldsymbol{b}$ is also supported in $\mathcal{O}(B)$.  Let $ K'_\ell (\mathbf x,\mathbf y)=\tau_\mathbf x K'_\ell (-\mathbf y)$. From  Theorem~\ref{teo:support} we conclude that  $ K'_\ell (\mathbf x,\mathbf y)=0$ for $d(\mathbf x,\mathbf y)>2^jr$.  Since 
    \begin{equation*}
    \begin{split}
        {K}_\ell'*((r^2 \Delta_k)^n\Delta_k^M \boldsymbol{b})(\mathbf{x})&=\int_{\mathbb{R}^N}((r^2 \Delta_k)^n\Delta_k^M \boldsymbol{b})(\mathbf{y}) {K}_\ell'(\mathbf x,\mathbf y)\mathbf{y})\,dw(\mathbf{y})
        \end{split}
    \end{equation*}
    (see~\eqref{eq:conv_translation}), $ {K}_\ell'*((r^2 \Delta_k)^n\Delta_k^M \boldsymbol{b})$ is supported in $\mathcal{O}(2^{j+3}B)$, so $ {K}_\ell'*((r^2 \Delta_k)^n\Delta_k^M \boldsymbol{b})(\mathbf{x})=0$ for $\mathbf{x} \in U_j(B)$ and  $j\geq 5$.} Therefore,
    \begin{align}\label{eq:K=K''}
        \|{K}_\ell *((r^2 \Delta_k)^n\Delta_k^M \boldsymbol{b})\|_{L^2(U_j(B))}=\|{K}_\ell'' *((r^2 \Delta_k)^n\Delta_k^M \boldsymbol{b})\|_{L^2(U_j(B))}.
    \end{align}
    Recall that $(r^2 \Delta_k)^n\Delta_k^M \boldsymbol{b}\in L^2(dw)$ and ${K}_\ell'' \in L^1(dw) \cap L^2(dw)$ (see~\eqref{eq:K_1}). Therefore, due to Lemma~\ref{lem:ThangaveluXu}, we obtain
    \begin{align}\label{eq:K''onUj}
        \|{K}_\ell'' *((r^2 \Delta_k)^n\Delta_k^M \boldsymbol{b})\|_{L^2(U_j(B))} \leq \|(r^2 \Delta_k)^n\Delta_k^M \boldsymbol{b}\|_{L^2(dw)}\|{K}_\ell''\|_{L^1(dw)}.
    \end{align}
    From~\eqref{eq:K_3} we get
    \begin{equation}\label{K''L1}
        \|{K}_\ell''\|_{L^1(dw)} \leq CM_0r^{-(\mathbf{N}/2+\delta)} 2^{-j(\mathbf{N}/2+\delta)}2^{-\ell(\mathbf{N}/2+\delta)}.
    \end{equation}
    {The relations \eqref{eq:size2M} give 
    \begin{equation}\label{eq:2M_is_M}
        \|(r^2 \Delta_k)^n\Delta_k^M \boldsymbol{b}\|_{L^2(dw)} \leq r^{2M}w(B)^{\frac{1}{2}-\frac{1}{p}}, \quad n=0,1,\dots M.
    \end{equation}
    Consequently, from \eqref{eq:K=K''}--\eqref{eq:2M_is_M}, we conclude that  }
    \begin{equation}\label{eq:only_j}
    \begin{split}
        \sum_{2^\ell r \geq 1} & \left\|{K}_\ell *((r^2 \Delta_k)^n\Delta_k^M \boldsymbol{b})\right\|_{L^2(U_j(B))} \\
        &\leq CM_0r^{-(\mathbf{N}/2+\delta)}r^{2M}w(B)^{\frac{1}{2}-\frac{1}{p}}2^{-j(\mathbf{N}/2+\delta)}\sum_{2^{\ell}r\geq 1}2^{-\ell(\mathbf{N}/2+\delta)}\\& \leq CM_0r^{2M}w(B)^{\frac{1}{2}-\frac{1}{p}}2^{-j(\mathbf{N}/2+\delta)}.
        \end{split}
    \end{equation}
    By~\eqref{eq:growth},
    \begin{align}\begin{split}\label{eq:BallToBall}
        w(B)^{\frac{1}{2}- \frac{1}{p}}2^{-j(\frac{\mathbf{N}}{2}+\delta)}&=\Big(\frac{w(2^jB)}{w(B)}\Big)^{\frac{1}{p}-\frac{1}{2}} w(2^jB)^{\frac{1}{2}-\frac{1}{p}}2^{-j(\frac{\mathbf{N}}{2}+\delta)}\\
        &\leq C2^{j\mathbf{N}(\frac{1}{p}-\frac{1}{2})} w(2^jB)^{\frac{1}{2}-\frac{1}{p}} 2^{-j(\frac{\mathbf{N}}{2}+\delta)}.
   \end{split} \end{align}
    Finally, by our choice of $\varepsilon$ we have $0<\varepsilon < \delta-\mathbf{N}\left(\frac{1}{p}-1\right)$. Therefore, using~\eqref{eq:only_j} and~\eqref{eq:BallToBall}, we arrive at   
    \begin{equation*}
        \sum_{2^\ell r \geq 1}\left\|{K}_\ell *((r^2 \Delta_k)^n\Delta_k^M \boldsymbol{b})\right\|_{L^2(U_j(B))} \leq CM_0 2^{-\varepsilon j}w(2^jB)^{\frac{1}{2}-\frac{1}{p}},
    \end{equation*}
    which is the same bound as in~\eqref{eq:to_verify}.

     \noindent\textbf{Case 2:} $2^{\ell}r < 1$. It follows from~\eqref{eq:Laplacian_on_Fourier_side} and the definitions of $\widetilde{K}_\ell,{K}_\ell,\widetilde{L}_\ell,{L}_\ell$ that 
     \begin{equation*}
         (\Delta_k^M {K}_\ell)*((r^2 \Delta_k)^n \boldsymbol{b})=(-1)^{M}({L}_\ell)*((r^2 \Delta_k)^n \boldsymbol{b}).
     \end{equation*}
     Since $\supp (r^2 \Delta_k)^n \boldsymbol{b}\subseteq \mathcal{O}(B)$, proceeding in the same way as in \noindent\textbf{Case 1}, we have the following bound:
     \begin{equation}\label{eq:L_ell1}
         \|{L}_\ell *((r^2 \Delta_k)^n \boldsymbol{b})\|_{L^2(U_j(B))} \leq \|(r^2 \Delta_k)^n \boldsymbol{b}\|_{L^2(dw)}\|{L}_\ell(\cdot) \chi_{[2^j r,\infty)}(\|\cdot\|) \|_{L^1(dw)}.
     \end{equation}
     Next, by~\eqref{eq:L_3}, 
      \begin{equation}\label{eq:L_ell2}
        \|{L}_\ell (\cdot) \chi_{[2^j r,\infty)}(\|\cdot\|)\|_{L^1(dw)} \leq CM_0r^{-(\mathbf{N}/2+\delta)} 2^{2M\ell}2^{-j(\mathbf{N}/2+\delta)}2^{-\ell(\mathbf{N}/2+\delta)}.
    \end{equation}
   
    Consequently, applying \eqref{eq:L_ell1}, \eqref{eq:size2M}, and \eqref{eq:L_ell2}, we obtain 
    \begin{align*}
        \sum_{2^\ell r < 1}\left\|{L}_\ell *((r^2 \Delta_k)^n \boldsymbol{b})\right\|_{L^2(U_j(B))} &\leq CM_0r^{-(\mathbf{N}/2+\delta)}r^{4M}w(B)^{\frac{1}{2}-\frac{1}{p}}2^{-j(\mathbf{N}/2+\delta)}\sum_{2^{\ell}r< 1} 2^{2M\ell}2^{-\ell(\mathbf{N}/2+\delta)}.
    \end{align*}
    Since $2M>\frac{\mathbf{N}}{2}+\delta$,  we have
    \begin{equation*}
         CM_0r^{-(\mathbf{N}/2+\delta)}r^{4M}w(B)^{\frac{1}{2}-\frac{1}{p}}2^{-j(\mathbf{N}/2+\delta)}\sum_{2^{\ell}r< 1} 2^{2M\ell}2^{-\ell(\mathbf{N}/2+\delta)} \leq CM_0r^{2M}w(B)^{\frac{1}{2}-\frac{1}{p}}2^{-j(\mathbf{N}/2+\delta)},
    \end{equation*}
    which  is exactly the same bound as in~\eqref{eq:only_j}.  Therefore, using \eqref{eq:BallToBall}, we get 
    \begin{equation*}
         \sum_{2^\ell r < 1}\left\|{L}_\ell *((r^2 \Delta_k)^n \boldsymbol{b})\right\|_{L^2(U_j(B))} \leq CM_0 2^{-\varepsilon j}w(2^jB)^{\frac{1}{2}-\frac{1}{p}},
    \end{equation*}
    which finishes the proof of~\eqref{eq:to_verify} for $j\geq 5$. 

    In summary, {we have proved that there is a constant $C>0$ such that for any $(p,2,2M,\Delta_k)$-atom $\boldsymbol a$, the function $(CM_0)^{-1} \mathcal T_m \boldsymbol a$ is a $(p,2,M,\Delta_k,\varepsilon)$-molecule. Consequently, by virtue of Proposition \ref{prop:m_in_Hp},  we have }
    \begin{equation}\label{eq:on_atom}
    \|\mathcal{T}_m\boldsymbol{a}\|_{\mathbb{H}^p_{\mathrm{Dunkl}}} \leq CM_0.
    \end{equation}
    Now, let us deduce from~\eqref{eq:on_atom} that $\mathcal{T}_m$ has a unique extension to a bounded operator on $H^p_{\mathrm{Dunkl}}$. First, let us assume that $f \in \mathbb{H}^p_{\mathrm{Dunkl}}$. Then $f$ admits an atomic decomposition $f=\sum_{j=1}^{\infty}\lambda_j \boldsymbol{a}_j$,
     where the $\boldsymbol{a}_j$ are $(p,2,2M,\Delta_k)$-atoms,
    \begin{equation*}
        \Big(\sum_{j=1}^{\infty}|\lambda_j|^p\Big)^{1/p} \leq C\|f\|_{\mathbb{H}_{\mathrm{Dunkl}}^p},
    \end{equation*}
    and the series $\sum_{j=1}^{\infty}\lambda_j \boldsymbol{a}_j$ converges in $L^2(dw)$. Since $\mathcal{T}_m$ is a bounded operator on $L^2(dw)$, we have
    \begin{equation}
        \mathcal{T}_mf=\mathcal{T}_m\Big(\sum_{j=1}^{\infty}\lambda_j \boldsymbol{a}_j\Big) =\sum_{j=1}^{\infty}\lambda_j \mathcal{T}_m \boldsymbol{a}_j,
    \end{equation}
    where the series converges in $L^2(dw)$. Moreover, for all $\mathbf{x} \in \mathbb{R}^N$ we have
    \begin{equation*}
        S(\mathcal{T}_m f)(\mathbf{x}) \leq \sum_{j=1}^{\infty}|\lambda_j|S(\mathcal{T}_m\boldsymbol{a}_j)(\mathbf{x})
    \end{equation*}
    (let us recall that $S$ is defined in~\eqref{eq:square_conic}).
    
    Then, for $0<p \leq 1$ we have
    \begin{equation*}
        \left\|S(\mathcal{T}_m f)\right\|_{L^p(dw)}^p \leq \sum_{j=1}^{\infty}|\lambda_j|^p \|S(\mathcal{T}_m \boldsymbol{a}_j)\|_{L^p(dw)}^p \leq CM_0^p\sum_{j=1}^{\infty}|\lambda_j|^p \leq CM_0^{p}\|f\|_{\mathbb{H}^p_{\mathrm{Dunkl}}}^p.
    \end{equation*}
    Therefore, $\mathcal{T}_m$ is bounded on $\mathbb{H}_{\mathrm{Dunkl}}^p$, which is a dense subspace of $H^p_{\mathrm{Dunkl}}$. Consequently, it has a unique extension to a bounded operator on $H_{\mathrm{Dunkl}}^p$.
\end{proof}

\section{Appendix -- proof of Lemma \ref{lem:L_2_Schwartz}}

\begin{lemma}\label{lem:ord_to_Dunkl}
    Let $\gamma \geq 0$ and let $\ell$ be a non-negative integer. There exists a constant $C>0$, depending only on $(\mathcal{R},k,N)$, $\gamma$, and $\ell$, such that if $\varphi \in C^{\ell}(\mathbb{R}^N)$ satisfies
    \begin{equation}\label{eq:induction_ord}
        |\partial^{{\boldsymbol{\beta}}}\varphi(\mathbf{x})| \leq (1+\|\mathbf{x}\|)^{-\mathbf{N}/2-\gamma} \text{ for all } {\boldsymbol{\beta}} \in \mathbb{N}_0^N \text{ with } |{\boldsymbol{\beta}}| \leq \ell \text{ and }\mathbf{x} \in \mathbb{R}^N,
    \end{equation}
    then
    \begin{equation}\label{eq:induction_dunkl}
        |T^{{\boldsymbol{\beta}}}\varphi(\mathbf{x})| \leq C(1+\|\mathbf{x}\|)^{-\mathbf{N}/2-\gamma} \text{ for all } {\boldsymbol{\beta}} \in \mathbb{N}_0^N \text{ with } |{\boldsymbol{\beta}}| \leq \ell \text{ and }\mathbf{x} \in \mathbb{R}^N.
    \end{equation}
\end{lemma}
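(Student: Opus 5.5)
Write $T^{\boldsymbol{\beta}}=T_{j_1}\circ\dots\circ T_{j_m}$ with $m=|\boldsymbol{\beta}|\le \ell$, and prove by induction on $m$ the stronger statement: for every $\varphi$ satisfying \eqref{eq:induction_ord} with order $\ell$, each $T_{j_1}\cdots T_{j_m}\varphi$ satisfies \eqref{eq:induction_ord}, up to a constant, with order $\ell-m$. The base case $m=0$ is trivial. For the inductive step, it suffices to establish a single-operator claim:
\begin{center}
if $\varphi\in C^{q+1}$ satisfies $|\partial^{\boldsymbol{\beta}}\varphi(\mathbf x)|\le (1+\|\mathbf x\|)^{-\mathbf N/2-\gamma}$ for $|\boldsymbol{\beta}|\le q+1$, then $|\partial^{\boldsymbol{\beta}} T_j\varphi(\mathbf x)|\le C(1+\|\mathbf x\|)^{-\mathbf N/2-\gamma}$ for $|\boldsymbol{\beta}|\le q$.
\end{center}
Iterating this claim $m$ times, with $q=\ell-1,\ell-2,\dots$, and absorbing constants by linearity gives \eqref{eq:induction_dunkl}.

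To prove the claim, recall from \eqref{eq:T_xi} that $T_j\varphi=\partial_j\varphi+\sum_\alpha \frac{k(\alpha)}{2}\langle\alpha,e_j\rangle D_\alpha\varphi$, where $D_\alpha\varphi(\mathbf x)=\frac{\varphi(\mathbf x)-\varphi(\sigma_\alpha\mathbf x)}{\langle\alpha,\mathbf x\rangle}$. The term $\partial_j\varphi$ is immediate. For the difference term, use the standard integral representation. Since $\mathbf x-\sigma_\alpha\mathbf x=\langle\alpha,\mathbf x\rangle\alpha$ (as $\|\alpha\|^2=2$),
$$D_\alpha\varphi(\mathbf x)=\int_0^1 \partial_\alpha\varphi\big(\mathbf x-s\langle\alpha,\mathbf x\rangle\alpha\big)\,ds .$$
Differentiate under the integral. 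The map $\mathbf x\mapsto \mathbf x-s\langle\alpha,\mathbf x\rangle\alpha$ is linear with uniformly bounded matrix entries for $s\in[0,1]$, so by the chain rule $\partial^{\boldsymbol{\beta}}D_\alpha\varphi(\mathbf x)$ is bounded by a sum of terms $C\int_0^1|\partial^{\boldsymbol{\beta}'}\varphi(\mathbf x_s)|\,ds$, where $|\boldsymbol{\beta}'|=|\boldsymbol{\beta}|+1\le q+1$ and $\mathbf x_s=\mathbf x-s\langle\alpha,\mathbf x\rangle\alpha$.

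The key point, and the only nontrivial step, is that the decay weight survives this change of point. The vector $\mathbf x_s$ lies on the segment between $\mathbf x$ and $\sigma_\alpha\mathbf x$. Decomposing $\mathbf x$ into its component orthogonal to $\alpha$ and its component along $\alpha$, we get $\|\mathbf x_s\|^2=\|\mathbf x^\perp\|^2+(1-2s)^2\|\mathbf x^{\parallel}\|^2$. This can be as small as $\|\mathbf x^\perp\|$ (at $s=1/2$), so pointwise the weight at $\mathbf x_s$ is not comparable to the weight at $\mathbf x$. This is where I expect the difficulty.

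My first approach handles it by splitting the integral. On the set $|1-2s|\ge 1/2$, which has measure $1/2$, we have $\|\mathbf x_s\|\ge \|\mathbf x\|/2$, so that part is fine. For the remaining part I would not use the integral formula, but instead return to the difference quotient when $|\langle\alpha,\mathbf x\rangle|\ge 1$. There the term is bounded by $|\varphi(\mathbf x)|+|\varphi(\sigma_\alpha\mathbf x)|$, using that $\sigma_\alpha$ is an isometry. Derivatives of $\varphi\circ\sigma_\alpha$ again have the same decay, and derivatives falling on $1/\langle\alpha,\mathbf x\rangle$ are bounded by constants because $|\langle\alpha,\mathbf x\rangle|\ge1$. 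When $|\langle\alpha,\mathbf x\rangle|<1$, we have $\|\mathbf x_s-\mathbf x\|\le\sqrt2$ for all $s$, so $1+\|\mathbf x_s\|\sim 1+\|\mathbf x\|$ uniformly, and the integral representation gives the bound directly. Combining the two regimes, and using $C^{q+1}$ regularity on both sides, proves the claim and hence the lemma.
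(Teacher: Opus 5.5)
Your proof is correct and is essentially the paper's argument: you use the same reduction to a single operator $T_j$ with induction, and the same dichotomy on $|\langle\alpha,\mathbf x\rangle|$. When this quantity is bounded below you apply the Leibniz rule to the difference quotient, and when it is small you use the integral representation. The differences are minor. Your observation that $\|\mathbf x_s-\mathbf x\|\le\sqrt2$ gives $1+\|\mathbf x_s\|\sim 1+\|\mathbf x\|$ for all $\mathbf x$ lets you skip the paper's preliminary reduction to $\|\mathbf x\|>2$ via the global $L^\infty$ bound. Your initial splitting of the $s$-integral according to $|1-2s|$ is unnecessary, since the dichotomy in $|\langle\alpha,\mathbf x\rangle|$ alone already covers every case.
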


\begin{proof}
    The proof closely follows the argument in~\cite[pages 284--285]{DzH-square}, with additional control of the constants. For completeness and the reader’s convenience, we present the proof here. By the definition of $T_j$ and by the fundamental theorem of calculus, for all $f \in C^{1}(\mathbb{R}^N)$, we have
\begin{equation*}\begin{split} T_jf(\mathbf x)&
=\partial_j f(\mathbf x)-\sum_{\alpha\in \mathcal{R}} \frac{k(\alpha)}{2}\alpha_j\langle \mathbf{x}, \alpha \rangle^{-1}\int_0^{1} \frac{d}{dt}(f(\mathbf{x}-2t\alpha \|\alpha\|^{-2}\langle \mathbf{x}, \alpha\rangle ))\,dt\\
&=\partial_j f(\mathbf x)+ \sum_{\alpha\in \mathcal{R}}\frac{k(\alpha)}{2}\alpha_j \int_0^1 \langle {(}\nabla_{\mathbf{x}} f{)}(\mathbf{x}-2t\alpha \|\alpha\|^{-2}\langle \mathbf{x}, \alpha\rangle ), \alpha \rangle \,dt
\end{split}\end{equation*}
(cf.~\cite[page 9]{Roesler-Voit}). Consequently, for any ${\boldsymbol{\beta}} \in \mathbb{N}_0^N$ there is a constant $C>0$, depending only on $(\mathcal{R},k,N)$ and ${\boldsymbol{\beta}}$, such that for all $f \in C^{|{\boldsymbol{\beta}}|+1}(\mathbb{R}^N)$  and $j \in \{1,\ldots,N\}$ one has 
\begin{equation}\label{eq:s_x_1}
\sup_{\mathbf x\in\mathbb{R}^N} |\partial^{\boldsymbol{\beta}} T_j f(\mathbf x)|\leq C \sup_{\mathbf x\in\mathbb{R}^N} \|\nabla_{\mathbf{x}}\partial^{{\boldsymbol{\beta}}}f(\mathbf x)\|.
\end{equation}
It follows from~\eqref{eq:s_x_1} by induction  on $|{\boldsymbol{\beta}}|$ that for any ${\boldsymbol{\beta}} \in \mathbb{N}_0^N$ there is a constant $C_{{\boldsymbol{\beta}}}=C_{{\boldsymbol{\beta}},\mathcal{R},k,N}>0$ such that for all $f \in C^{|{\boldsymbol{\beta}}|}(\mathbb{R}^N)$ we have
\begin{equation}\label{eq:small_x}
\|T^{{\boldsymbol{\beta}}}f\|_{L^{\infty}} \leq C_{{\boldsymbol{\beta}}}\sum_{{\boldsymbol{\beta}}' \in \mathbb{N}_0^{N},\, |{\boldsymbol{\beta}}'| = |{\boldsymbol{\beta}}|}\|\partial^{{\boldsymbol{\beta}}'}f\|_{L^{\infty}}.
\end{equation}
The proof of~\eqref{eq:induction_dunkl} is by induction on $\ell$. The claim for $\ell=0$ is obvious. Assume that~\eqref{eq:induction_ord} implies~\eqref{eq:induction_dunkl} for ${0}\leq \ell \leq \ell_1$. We will prove that~\eqref{eq:induction_ord}  implies~\eqref{eq:induction_dunkl} for $\ell=\ell_1+1$.  By~\eqref{eq:small_x}, it is enough to prove~\eqref{eq:induction_dunkl} for $\|\mathbf{x}\|>2$.
 Let $\alpha \in \mathcal{R}$. By the definition of $T_{j}$, it is enough to show that there is a constant $C>0$, depending only on $\ell,\mathcal{R},k,N$, such that the function
$$\mathbf x\mapsto C^{-1}\frac{\varphi(\mathbf{x})-\varphi(\sigma_{\alpha}(\mathbf{x}))}{\langle \mathbf{x},\alpha \rangle}$$ satisfies~\eqref{eq:induction_ord} with $\ell=\ell_1$. Let ${\boldsymbol{\beta}} \in \mathbb{N}_0^{N}$ be such that $|{\boldsymbol{\beta}}| \leq \ell_1$. We consider two cases.
\\
\noindent\textbf{Case 1.} $|\langle \mathbf{x},\alpha \rangle| \geq {1/10}$. Note that there is a constant $C=C_{\ell_1}>0$ independent of $\mathbf{x}$ such that for all $|{\boldsymbol{\beta}}'| \leq \ell_1$ we have
\begin{equation}\label{eq:scalar_est}
    |\partial^{{\boldsymbol{\beta}}'}_{\mathbf{x}}(\langle \mathbf{x},\alpha \rangle^{-1})| \leq C.
\end{equation}
Due to the Leibniz rule, the estimate for $\partial^{{\boldsymbol{\beta}}}\Big(\frac{\varphi(\mathbf{x})-\varphi(\sigma_{\alpha}(\mathbf{x}))}{\langle \mathbf{x},\alpha \rangle}\Big)$ is a consequence of~\eqref{eq:induction_ord} and~\eqref{eq:scalar_est} (with the appropriate control of the constants).
\\
\noindent\textbf{Case 2.} $|\langle \mathbf{x},\alpha \rangle| < {1/10}$. We have
\begin{equation}\label{eq:diff_argument}
\begin{split}
    \frac{\varphi(\mathbf{x})-\varphi(\sigma_{\alpha}(\mathbf{x}))}{\langle \mathbf{x}, \alpha \rangle}&={-}\langle \mathbf{x}, \alpha \rangle^{-1}\int_0^{1} \frac{d}{dt}(\varphi(\mathbf{x}-2t\alpha \|\alpha\|^{-2}\langle \mathbf{x}, \alpha\rangle ))\,dt\\&= \int_0^1 \langle (\nabla_{\mathbf{x}} \varphi)(\mathbf{x}-2t\alpha \|\alpha\|^{-2}\langle \mathbf{x}, \alpha\rangle ), \alpha \rangle \,dt.
\end{split}
\end{equation}
By our assumptions, $|\langle \mathbf{x},\alpha \rangle| < {1/10}$ { and $\| \mathbf x\|>2$}, so for all $t \in [0,1]$ we have $\|\mathbf{x}-2t\alpha \|\alpha\|^{-2}\langle \mathbf{x}, \alpha\rangle\| \sim \|\mathbf{x}\|$,
so, by~\eqref{eq:induction_ord} with $\ell=\ell_1+1$, we obtain
\begin{align*}
    &\left|\partial^{{\boldsymbol{\beta}}}_{\mathbf{x}}\Big(\frac{\varphi(\mathbf{x})-\varphi(\sigma_{\alpha}(\mathbf{x}))}{\langle \mathbf{x}, \alpha \rangle}\Big)\right| \leq \left| \int_0^1 \langle \partial^{{\boldsymbol{\beta}}}_{\mathbf{x}}{\big\{}(\nabla_{\mathbf{x}} \varphi)(\mathbf{x}-2t\alpha \|\alpha\|^{-2}\langle \mathbf{x}, \alpha\rangle ){\big\}}, \alpha \rangle \,dt\right| \\&\leq C_{{\boldsymbol{\beta}}}\int_{0}^{1}\left(1+\|\mathbf{x}-2t\alpha \|\alpha\|^{-2}\langle \mathbf{x}, \alpha\rangle\|\right)^{-\mathbf{N}/2-\gamma}\,dt \leq C(1+\|\mathbf{x}\|)^{-\mathbf{N}/2-\gamma},
\end{align*}
which completes the proof of \eqref{eq:induction_dunkl} (with the appropriate control of the constants).
\end{proof}

\begin{proof}[Proof of Lemma \ref{lem:L_2_Schwartz}]
    If $\|f\|_{(\mathbf{n})}=0$, then $f \equiv 0$ and the inequality holds trivially. Assume $\|f\|_{(\mathbf{n})}>0$. Since $f \in \mathcal{S}(\mathbb{R}^N)$, for all $\mathbf{x} \in \mathbb{R}^N$ and $\boldsymbol \beta \in \mathbb{N}_0^N$ such that $|\boldsymbol \beta| \leq 2M_1$ we have
    \begin{align*}
        |\partial^{\boldsymbol \beta}f(\mathbf{x})| \leq (1+\|\mathbf{x}\|)^{-\mathbf{n}}\|f\|_{(\mathbf{n})}.
    \end{align*}
    Then, by Lemma~\ref{lem:ord_to_Dunkl} there is a constant $C>0$ independent of $f$ such that
    \begin{equation}\label{eq:transformed_to_Dunkl}
        |T^{\boldsymbol \beta}f(\mathbf{x})| \leq C(1+\|\mathbf{x}\|)^{-\mathbf{n}}\|f\|_{(\mathbf{n})}.
    \end{equation}
    The operator $(\Delta_k)^{M_1}$ is a finite linear combination of operators $T^{\boldsymbol \beta}$ of order $|\boldsymbol \beta| = 2M_1$. By the triangle inequality and estimate \eqref{eq:transformed_to_Dunkl}, there exists a constant $C>0$  such that for all $\mathbf{x} \in \mathbb{R}^N$ we have
\begin{equation}\label{eq:laplacian_bound}
    |(\Delta_k)^{M_1}f(\mathbf{x})| \leq \sum_{|\boldsymbol \beta|=2M_1} c_{\boldsymbol \beta} |T^{\boldsymbol \beta}f(\mathbf{x})| \leq C(1+\|\mathbf{x}\|)^{-\mathbf{n}}\|f\|_{(\mathbf{n})}.
\end{equation}
    Finally, applying~\eqref{eq:laplacian_bound} and the fact that $\mathbf{n}>\frac{\mathbf{N}}{2}$ (so the function $\mathbf{x} \mapsto (1+\|\mathbf{x}\|)^{-2\mathbf{n}}$ is integrable with respect to $dw$) we obtain
    \begin{align*}
        \|(\Delta_k)^{M_1}f\|_{L^2(dw)}^2&=\int_{\mathbb{R}^N}|(\Delta_k)^{M_1}f(\mathbf{x})|^2\,dw(\mathbf{x})\leq C\|f\|_{(\mathbf{n})}^2\int_{\mathbb{R}^N}(1+\|\mathbf{x}\|)^{-2\mathbf{n}}\,dw(\mathbf{x}) \leq C\|f\|_{(\mathbf{n})}^2 .   \end{align*}
    Taking the square root of both sides concludes the proof.
\end{proof}

\end{document}